\newtheorem{theorem}{Theorem}[section]
\newtheorem{definition}[theorem]{Definition}
\newtheorem{lemma}[theorem]{Lemma}
\newtheorem{proposition}[theorem]{Proposition}
\newtheorem{corollary}[theorem]{Corollary}
\newtheorem{remark}[theorem]{Remark}
\newtheorem{example}[theorem]{Example}
\begin{document}

\title{\bf Octonionic monogenic and slice monogenic Hardy and Bergman spaces}

\author{
Fabrizio Colombo\\
Dipartamento di Matematica\\
Politecnico di Milano\\
Via Bonardi 9\\
20133 Milano, Italy\\
fabrizio.colombo@polimi.it\\
\and
Rolf S\"{o}ren Krau{\ss}har\\
Chair of Mathematics\\
Faculty of Education\\
University of Erfurt\\
Nordh\"auser Str. 63\\
99089 Erfurt, Germany\\
soeren.krausshar@uni-erfurt.de\\
\and
Irene Sabadini\\
Dipartamento di Matematica\\
Politecnico di Milano\\
Via Bonardi 9\\
20133 Milano, Italy\\
irene.sabadini@polimi.it}

\maketitle
\begin{abstract}
In this paper we discuss some basic properties of octonionic Bergman and Hardy spaces. In the first part we review some fundamental concepts of the general theory of octonionic Hardy and Bergman spaces together with related reproducing kernel functions in the monogenic setting.
We explain how some of the fundamental problems in well-defining a reproducing kernel can be overcome in the non-associative setting by looking at the real part of an appropriately defined para-linear octonion-valued inner product. The presence of a weight factor of norm $1$ in the definition of the inner product is an intrinsic new ingredient in the octonionic setting.
Then we look at the slice monogenic octonionic setting using the classical complex book structure. We present explicit formulas for the slice monogenic reproducing kernels for the unit ball, the right octonionic half-space and strip domains bounded in the real direction. In the setting of the unit ball we present an explicit sequential characterization which can be obtained by applying the special Taylor series representation of the slice monogenic setting together with particular octonionic calculation rules that reflect the property of octonionic para-linearity.
\end{abstract}

{\bf Keywords}: octonionic Hilbert spaces, octonionic monogenic functions, slice monogenic functions,  Bergman kernel,  Szeg\"o kernel, para-linear operators \\[0.1cm]

\noindent {\bf Mathematical Review Classification numbers}: 30G35, 17D05

\section{Introduction}

During the last decades an enormous effort has been done in generalizing classical complex analysis to higher dimensions in many different ways. Apart from the classical approach of considering holomorphic functions in several complex variables, which is a commutative setting, there are numerous different approaches offering other function theories with a range of values in non-commutative and even non-associative algebras. In this paper, we concentrate ourselves on the eight-dimensional non-associative octonionic setting. Following the old works of Hurwitz, the Cayley octonions form the largest normed division algebra over the real numbers and thus represent a very special important case. In this context the term algebra has to be understood in the wider sense admitting non-associativity.
Even this particular setting offers several different approaches of generalizing complex function theory. This paper focusses on two function classes. The first part of the paper addresses the set of octonionic monogenic functions which are octonion-valued functions satisfying the first order generalized Cauchy-Riemann system $ \sum_{i=0}^7 e_i \frac{\partial }{\partial x_i} f$ where $e_0=1$ is the multiplicative neutral element and where the elements $e_i$, $i=1,\ldots, 7$ denote the  linear independent imaginary octonionic units. For the basic theory, see for example \cite{CoSaStr_book,DS,Nono,XL2001,XL2002,KraAACA}. In the second part of the paper we look at the set of octonionic slice monogenic functions in the sense of using the classical complex book structure, see e.g. \cite{GP,GS,RY}.
\par\medskip\par
In contrast to the associative setting of Clifford analysis (cf. for instance \cite{BDS}),  octonionic monogenic and slice monogenic  functions are only endowed with the algebraic structure of an $\mathbb{R}$-module. They do not form an $\mathbb{O}$-module. This seems to represent a serious obstacle in the development of a theory of octonionic reproducing kernel Hilbert modules, see for instance \cite{ConKra2021,Prather2021,QR2021,QR2022}. Note that all the classical theorems from functional analysis explicitly use the Cauchy-Schwarz inequality in their standard proofs.

Since we do not have a direct analogue of such an inequality one has to be extremely careful in the consideration of inner products. Notice that even the most fundamental theorems like the Riesz representation theorem or the existence of an adjoint operator all rely on the Cauchy-Schwarz inequality. The latter however does not hold for granted in the context of octonion-valued inner products.

This lack needs to be carefully taken into account when we want to introduce meaningful generalizations of octonionic monogenic Hardy and Bergman modules.  One effective possibility to overcome this serious problem is to work with real-valued inner products on the sets of $L^2$-integrable monogenic functions such as proposed in \cite{ConKra2021}. In fact, as explained in \cite{QR2021,QR2022}, instead of requiring that an octonion valued functional ${\cal{T}}$ say ${\cal{T}}(f):=(f,g)$ should be $\mathbb{O}$-linear in the classical sense that ${\cal{T}}(f \alpha) = ({\cal{T}}(f)) \alpha$ for all $\alpha \in \mathbb{O}$, the adequate condition in the octonionic setting is the so-called $\mathbb{O}$-para-linearity just demanding that ${\rm Re}[\alpha,f,{\cal{T}}] =0$ where $[\alpha,f,{\cal{T}}]:={\cal{T}}(f \alpha)-({\cal{T}}(f))\alpha$ is the second associator.  Equivalently this condition can be re-interpreted in the form ${\rm Re}(f\alpha,g) = {\rm Re}((f,g)\alpha)$, so we can work on the level of real-valued inner products.
\par\medskip\par
The deviation over the real inner product allows us to apply the corresponding standard functional analytic results on the real components of the octonionic functions. Then the deal consists in finding a suitable way how to lift all the component functions to one  octonionic function. This is a highly non-trivial problem and it has not yet been solved completely by now. In Section 3.1 we first carefully describe the theoretical background of Hardy spaces of octonionic monogenic functions. We also summarize very concisely some explicit formulas both for the Szeg\"o and Bergman kernels of some special domains, cf. also \cite{KraMMAS}. In this case we have a global lifting to one explicitly given octonionic function. We explain how general domains can be addressed and how a reproducing kernel can be obtained.

An important novelty of this paper consists in also providing the fundamental theoretical background for the treatment of the Bergman case. This is presented in detail in Section 3.2. Notice that in \cite{ConKra2021} we exclusively restricted ourselves to the Hardy space case. Here, we now extend these ideas and explain how also the Bergman case can be treated for rather general domains. To do so we have to implement a new idea, namely involving special weight functions of norm $1$. In fact, the  implementation of special weight factors of norm $1$ actually appear as an intrinsic feature in the definition of Bergman an Hardy spaces over non-associative algebras.

The cases of the unit ball and the half-space considered in \cite{WL2018,WL2020} then naturally fit within the more general definitions of Bergman spaces that we propose in Section 3.2.

\par\medskip\par

After having discussed this topic in the monogenic context, in Section 4 we turn to the slice monogenic context and explain how the basic definitions for Hardy and Bergman space case can also be introduced there.

Also in this setting we need to take the aspect of $\mathbb{O}$-para-linearity carefully into account when defining the appropriate analogues of the inner products.

The use of weight factors of norm $1$ inside the definition of the inner products turns out to be a fundamental and intrinsic ingredient in the setting of slice-monogenic Hardy spaces, too. However, in the slice-monogenic Bergman case these weight factors then are canceled out. The reason is that in the slice-monogenic Bergman case we deal on the one hand with a scalar-valued differential form and on the other hand with a particular definition of inner product that is a complex-valued expression on each slice. Therefore, we find ourselves in a special context that allows us to use Artin's theorem and these weight factors naturally disappear as in the associative case.

We prove explicit representation formulas for the slice monogenic Bergman and Szeg\"o kernel for the unit ball, the right octonionic half-space and for strip domains bounded in the direction of the real axis.
In the special case of the unit ball, we present a sequential characterization of functions belonging to the slice monogenic Hardy and Bergman space in terms of their Taylor coefficients. To do so we exploit that slice monogenic functions possess a particularly simple convergent Taylor series representation of the form $\sum\limits_{n\ge 0} x^n a_n$ similarly to the complex case. This leads to an equivalent way of describing slice octonionic Bergman and Hardy spaces over the unit ball either in a geometric way or in a sequential way just encoding the information of the Taylor coefficients. The octonionic Moufang rules provide the key ingredient to set up this characterization which in turn relies on the $\mathbb{O}$-para-linearity of the properly chosen.
\par\medskip\par
Summarizing, this paper hence provides the basic fundament for a subsequent study of Bergman and Hardy spaces in the octonionic monogenic and slice monogenic setting.

\section{Preliminaries}
\subsection{Basics on octonions}

For some basic information on the octonions we recommend to read for instance the overview paper \cite{Baez} which summarizes important elementary properties.

The octonions denoted by $\mathbb{O}$, also called Cayley numbers, are hypercomplex numbers of the form
$$
x = x_0 + x_1 e_1 + x_2 e_2 + x_3 e_3 + x_4 e_4 + x_5 e_5 + x_6 e_6 + x_7 e_7.
$$
As a real-vector space $\mathbb{O}$ is isomorphic to $\mathbb{R}^8$ and $x_0=:{\rm Re}(x)$ is called the real part of $x$. The imaginary part of $x$ denoted by $\Im(x)$ is an element of the seven-dimensional subspace $span_{\mathbb{R}}\{e_1,\ldots,e_7\}$.

The addition of two octonions and the multiplication of an octonion with a real number is defined in the same way as the standard addition and the scalar multiplication in  $\mathbb{R}^8$. However, octonions also can be endowed with a closed multiplication operation. This can be defined on the imaginary units by putting $e_4=e_1 e_2$, $e_5=e_1 e_3$, $e_6= e_2 e_3$ and $e_7 = e_4 e_3 = (e_1 e_2) e_3$ following here and throughout the whole paper the particular notation used in \cite{Baez}.
To leave it simple we also write $e_0:=1$ for the neutral element.
The general octonionic multiplication follows by the table:
\begin{center}
	\begin{tabular}{|l|rrrrrrr|}
		$\cdot$ & $e_1$&  $e_2$ & $e_3$ & $e_4$ & $e_5$ & $e_6$  & $e_7$ \\ \hline
		$e_1$  &  $-1$ &  $e_4$ & $e_5$ & $-e_2$ &$-e_3$ & $-e_7$ & $e_6$ \\
		$e_2$ &  $-e_4$&   $-1$ & $e_6$ & $e_1$ & $e_7$ & $-e_3$ & $-e_5$ \\
		$e_3$ &  $-e_5$& $-e_6$ & $-1$  & $-e_7$&$e_1$  & $e_2$  & $e_4$ \\
		$e_4$ &  $e_2$ & $-e_1$ & $e_7$ & $-1$  &$-e_6$ & $e_5$  & $-e_3$\\
		$e_5$ &  $e_3$ & $-e_7$ & $-e_1$&  $e_6$&  $-1$ & $-e_4$ & $e_2$ \\
		$e_6$ &  $e_7$ &  $e_3$ & $-e_2$& $-e_5$& $e_4$ & $-1$   & $-e_1$ \\
		$e_7$ & $-e_6$ &  $e_5$ & $-e_4$& $e_3$ & $-e_2$& $e_1$  & $-1$ \\ \hline 	
	\end{tabular}
\end{center}
The octonionic multiplication is closed, however, it is not associative. Using the above mentioned table it is easy to construct simple counterexamples. Nevertheless, in $\mathbb{O}$ we still have a couple of special calculation rules: We have left alternativity  $x(xy)=(xx)y$, we have right alternativity $(yx)x=y(xx)$ and there is the
flexibility identity stating that $(xy)x=x(yx)$ for all $x,y \in \mathbb{O}$. Further, there are the four Moufang identies: $
z(x(zy))=((zx)z)y$, $x(z(yz))=((xz)y)z$, $(zx)(yz)=(z(xy))z$ and $(zx)(yz)=z((xy)z)$, cf. \cite{Baez,dieckmann}.

$\mathbb{O}$ is free of zero-divisors thus forming a non-associative composition algebra where the norm satisfies the rule $|xy| = |x||y|$ for all $x,y \in \mathbb{O}$. The norm coincides with the Euclidean norm in $\mathbb{R}^8$, i.e. $|x|=(\sum\limits_{i=0}^7 x_i^2)^{1/2}$. It can also be expressed in terms of $|x| = \sqrt{x \overline{x}}$ where $$\overline x := x_0 - x_1 e_1 - x_2 e_2 - x_3 e_3 - x_4 e_4 - x_5 e_5 - x_6 e_6 - x_7 e_7$$ is the conjugated octonion. The Euclidean inner product of $\mathbb{R}^8$ can be written with octonions $x,y$ in the way $\langle x,y \rangle = {\rm Re}(x\overline{y}) = \sum\limits_{i=0}^7 x_i y_i$. It induces $|x|$ viz $\sqrt{\langle x,x\rangle}$.

\subsection{Octonionic monogenic function theory}

There are different ways to generalize complex function theory to the octonionic case. In this paper discuss the Riemann approach treating the class of octonionic monogenic functions \cite{DS,Nono,XL2001,Kauhanen_2} (for \cite{DS}, see also \cite{CoSaStr_book}) and the slice monogenic setting \cite{GS,GP} using the classical complex book structure.

First we recall, cf. e.g. \cite{DS,Nono,XL2001}:
\begin{definition} (octonionic monogenicity). \\
	Let $U \subseteq \mathbb{O}$ be an open set. A real differentiable function $f:U \to \mathbb{O}$ is called left (right) octonionic monogenic if ${\cal{D}} f = 0$
	(resp. $f {\cal{D}} = 0$). Here,
	${\cal{D}}:= \frac{\partial }{\partial x_0} + \sum\limits_{i=1}^7 e_i \frac{\partial }{\partial x_i}$ is the octonionic first order Cauchy-Riemann operator.  If $f$ satisfies $\overline{{\cal{D}}}f = 0$ (resp. $f\overline{\cal{D}} = 0$), then we call $f$ left (right) octonionic anti-monogenic.
\end{definition}
A substantial difference to the associative Clifford analysis setting consists in the fact that octonionic monogenic functions are maps from $\mathbb{O}$($\cong\mathbb R^8$) to $\mathbb{O}$ while Clifford monogenic functions are maps from $\mathbb{R}^8$ to $\mathbb{R}^{128}$. But even more important is the fact that left (right) octonionic monogenic functions do neither form a right nor a left ${\mathbb{O}}$-module, see for instance the very elementary counterexamples presented in \cite{Kauhanen_3}.\\
The lack of an $\mathbb{O}$-modular structure is a direct consequence of the lack of associativity and this causes serious obstacles. One consequence is a lack of a direct analogue of Stokes' formula, cf. \cite{XL2000}. Neither if both ${\cal{D}} f = 0$ and $g {\cal{D}} = 0$,  statements of the form
 	$$
 	\int\limits_{\partial G} g(x) \; (d\sigma(x) f(x)) = 0 \quad {\rm or}\quad
	\int\limits_{\partial G} (g(x) d\sigma(x)) \; f(x) = 0
 	$$
will be true in general.
In this paper we apply the notation $d\sigma(x) = n(x) |d\sigma(x)|$, where $n(x)$ is the outward directed unit normal field at $x$ and $|d\sigma(x)|$ the scalar surface measure of the $7$-dimensional boundary $\partial G$.
However, as described in \cite{XLT2008}, one has a modified Stokes' formula of the following form
$$
\int\limits_{\partial G} g(x) \; (d\sigma(x)  f(x)) = \int\limits_G \Bigg(
g(x)({\cal{D}} f(x)) + (g(x){\cal{D}})f(x)  - \sum\limits_{j=0}^7 [e_j, {\cal{D}}g_j(x),f(x)]
\Bigg) dV(x),
$$
where $dV(x) = dx_0 \wedge dx_1 \wedge \ldots \wedge dx_7$ is the scalar volume measure and
where $[a,b,c] := (ab)c - a(bc)$ is the so-called (first) associator. The (first) associator measures the non-associativity of three elements similarly as the classical commutator measures the non-commutativity.

Despite of these obstacles one has a close analogue of Cauchy's integral formula, cf. \cite{Nono,XL2002}:
\begin{proposition}\label{cauchy1}(Cauchy's integral formula).\\
Let $U \subseteq \mathbb{O}$ be open and $G \subseteq U$ be an $8$-dimensional compact oriented manifold with a strongly Lipschitz boundary $\partial G$. If $f: U \to \mathbb{O}$ is left octonionic monogenic, then for all $x$ in the interior of $G$
$$
f(x)= \frac{3}{\pi^4} \int\limits_{\partial G} q_{\bf 0}(y-x) \Big(d\sigma(y) f(y)\Big),
$$
where $q_{\bf 0}(y-x)=\dfrac{\bar y-\bar x}{|y-x|^8}$.
 \end{proposition}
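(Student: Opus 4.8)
The plan is to run the classical ``cut out a small ball around $x$ and apply Stokes'' argument, using the modified octonionic Stokes' formula recalled above and keeping track of the associator correction that it carries. The starting observation is that $q_{\mathbf{0}}(y-x)$ is, up to the constant $\tfrac{3}{\pi^{4}}$, the fundamental solution of $\mathcal{D}$ on $\mathbb{R}^{8}\setminus\{0\}$: writing $\Phi(y):=|y-x|^{-6}$, a direct differentiation gives $q_{\mathbf{0}}(y-x)=\dfrac{\overline{y-x}}{|y-x|^{8}}=-\tfrac{1}{6}\,\overline{\mathcal{D}}\,\Phi(y)$, and since $\mathcal{D}\,\overline{\mathcal{D}}=\overline{\mathcal{D}}\,\mathcal{D}=\Delta$ is the $8$-dimensional Laplacian (cf. \cite{Nono}) and $\Delta\Phi=0$ on $\mathbb{R}^{8}\setminus\{x\}$, the kernel $y\mapsto q_{\mathbf{0}}(y-x)$ is left octonionic monogenic there. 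It is also right monogenic: in the computation of $q_{\mathbf{0}}(\cdot-x)\,\mathcal{D}$ only terms $(e_{i}e_{j})\,\partial_{i}\partial_{j}\Phi$ appear, which collapse against the symmetric Hessian of $\Phi$ to $\Delta\Phi=0$, so no non-associativity obstruction intervenes. Finally note that $\tfrac{3}{\pi^{4}}=1/\omega_{7}$ with $\omega_{7}=\tfrac{\pi^{4}}{3}$ the surface area of $S^{7}\subset\mathbb{R}^{8}$.

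Now fix $x$ in the interior of $G$ and choose $\varepsilon>0$ so small that $\overline{B(x,\varepsilon)}\subset\operatorname{int}G$, and put $G_{\varepsilon}:=G\setminus\overline{B(x,\varepsilon)}$, whose oriented boundary is $\partial G$ together with the inward-oriented sphere $-\partial B(x,\varepsilon)$. Applying the modified Stokes' formula with $g(y):=q_{\mathbf{0}}(y-x)$ and the given left monogenic $f$, the volume integrand over $G_{\varepsilon}$ has $g(\mathcal{D}f)\equiv 0$ since $\mathcal{D}f=0$ and $(g\mathcal{D})f\equiv 0$ since $q_{\mathbf{0}}(\cdot-x)$ is right monogenic on $G_{\varepsilon}$; the heart of the matter is to show that the associator correction $\sum_{j=0}^{7}[e_{j},\mathcal{D}g_{j},f]$ vanishes on $G_{\varepsilon}$ as well. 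With $g_{0}=-\tfrac{1}{6}\partial_{0}\Phi$ and $g_{i}=\tfrac{1}{6}\partial_{i}\Phi$ ($i\ge 1$), the $j=0$ summand is zero because $e_{0}=1$ associates with every pair; and writing $u:=y-x$, $w:=\mathcal{D}\Phi=-6\,|u|^{-8}u$, one gets $\mathcal{D}g_{i}=\tfrac{1}{6}\partial_{i}w=8\,|u|^{-10}u_{i}\,u-|u|^{-8}e_{i}$, so that $[e_{i},\mathcal{D}g_{i},f]=8\,|u|^{-10}u_{i}\,[e_{i},u,f]$ once $[e_{i},e_{i},f]=0$ (left alternativity) is used. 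Summing over $i=1,\dots,7$ and pulling the real scalars $u_{i}$ out of the associator gives $8\,|u|^{-10}\,[\Im(u),u,f]$, and since $u=u_{0}+\Im(u)$ with $u_{0}\in\mathbb{R}$, a final use of left alternativity yields $[\Im(u),u,f]=[\Im(u),\Im(u),f]=0$. Hence the volume integral vanishes, and comparing boundary orientations we obtain $\int_{\partial G}q_{\mathbf{0}}(y-x)\bigl(d\sigma(y)f(y)\bigr)=\int_{\partial B(x,\varepsilon)}q_{\mathbf{0}}(y-x)\bigl(d\sigma(y)f(y)\bigr)$ for every admissible $\varepsilon$.

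It remains to pass to the limit $\varepsilon\to 0$ in the sphere integral. Parametrising $\partial B(x,\varepsilon)$ by $y=x+\varepsilon\xi$, $\xi\in S^{7}$, we have $|y-x|=\varepsilon$, $q_{\mathbf{0}}(y-x)=\varepsilon^{-7}\,\overline{\xi}$ and $d\sigma(y)=\xi\,\varepsilon^{7}\,|d\sigma(\xi)|$, so the integrand is $\overline{\xi}\bigl(\xi\,f(x+\varepsilon\xi)\bigr)\,|d\sigma(\xi)|$; using $\overline{\xi}(\xi a)=(\overline{\xi}\xi)a=|\xi|^{2}a=a$ for $|\xi|=1$ (left alternativity, or Artin's theorem) together with $f(x+\varepsilon\xi)\to f(x)$ uniformly in $\xi\in S^{7}$, the sphere integral converges to $\bigl(\int_{S^{7}}|d\sigma(\xi)|\bigr)f(x)=\tfrac{\pi^{4}}{3}\,f(x)$. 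Multiplying by $\tfrac{3}{\pi^{4}}$ gives the stated formula. The only genuinely delicate point I anticipate is the vanishing of the associator correction term; once the alternativity identities $[e_{i},e_{i},f]=0$ and $[\Im(u),u,f]=0$ are in place the rest is the standard computation, and the limit is harmless because the $\varepsilon^{-7}$ growth of the kernel is exactly cancelled by the $\varepsilon^{7}$ factor coming from the $7$-dimensional surface measure.
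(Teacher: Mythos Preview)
The paper does not prove this proposition; it is stated as a known result with a reference to \cite{Nono,XL2002}. Your argument is a correct and complete reconstruction of the standard proof: you verify that $q_{\mathbf 0}(\cdot-x)$ is two-sided monogenic away from $x$, apply the modified Stokes' formula on $G_{\varepsilon}$, check that the associator correction $\sum_{j}[e_{j},\mathcal{D}g_{j},f]$ collapses via left alternativity to $8|u|^{-10}[\Im(u),u,f]=0$, and then handle the sphere integral in the $\varepsilon\to 0$ limit using Artin's theorem on the pair $(\xi,f(x+\varepsilon\xi))$ to get $\overline{\xi}(\xi\,a)=a$. All the octonionic steps are justified and the normalisation $\omega_{7}=\pi^{4}/3$ is correct.

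The one point worth flagging is purely expository: when you invoke ``left alternativity'' for $\overline{\xi}(\xi a)=(\overline{\xi}\xi)a$, what you are really using is Artin's theorem (the subalgebra generated by $\xi$ and $a$ is associative, and $\overline{\xi}=2\xi_{0}-\xi$ lies in it); the raw left-alternative identity $x(xy)=(xx)y$ alone does not immediately give the conjugated version. You do say ``or Artin's theorem'' parenthetically, so this is fine, but it is the latter that is doing the work.
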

The way how the brackets are put is important. Putting them in the different way, leads to a different formula
$$
\frac{3}{\pi^4} \int\limits_{\partial G} \Big( q_{\bf 0}(y-x) d\sigma(y)\Big) f(y)  =   f(x)   +   \int\limits_G \sum\limits_{i=0}^7
\Big[q_{\bf 0}(y-x),{\cal{D}}f_i(y),e_i  \Big] dy_0 \cdots dy_7,
$$
where the (first) associator appears again, cf. \cite{XL2002}.

\subsection{Octonionic slice monogenic functions}

The theory of slice monogenic functions, which has been widely studied in the past fifteen years, has also been successfully performed on the octonions. The first paper in which this class of functions was studied is \cite{GS} following the quaternionic case, and to introduce it we need to introduce more notations.
By $\mathbb{S}$ we denote the unit sphere of purely imaginary
octonions, i.e. $$\mathbb{S}=\{x=\sum^7_{k=1} x_ke_k \ \
\textnormal{such that} \ \ \sum^7_{k=1} x^2_k =1\}.$$
We note that
if $I\in\mathbb{S}$, then $I^2=-1$ so the elements
of $\mathbb{S}$ are called imaginary units and for any $I\in\mathbb S$ we can consider the complex plane
$\mathbb{C}_I=\mathbb{R}+\mathbb{R}I$
containing $1$ and $I$.
\\
Given $x\in\mathbb O$ we can write $x=u+Iv$ for a uniquely defined $I\in\mathbb S$ if $x\not\in\mathbb R$. By $[x]$ we denote the sphere associated with $x$ namely
$$
[x]=\{y=u+Jv\ {\rm \ where\ } J\in\mathbb S,\, x=u+Iv\}.
$$

Next, an open set  $\Omega\subseteq\mathbb O$ is said to be axially symmetric if $[x]\subset \Omega$ whenever $x\in\Omega$.
Moreover, $\Omega$ is called slice domain if it is a domain whose intersection with $\mathbb C_I$ is connected for all $I \in \mathbb S$.

\begin{definition}\label{slicemon} Let $\Omega$ be a domain in
$\mathbb{O}$. A real differentiable function $f:\Omega \to
\mathbb{O}$ is said to be (left) slice monogenic if, for every $I \in
\mathbb{S}$, its restriction $f_I$ to $\mathbb{C}_I$ is holomorphic on $\Omega \cap \mathbb{C}_I$, i.e.
$$\frac{1}{2}\left(\frac{\partial}{\partial u}
+I\frac{\partial}{\partial v}\right)f_I(u+Iv)=0,$$ on $\Omega \cap \mathbb{C}_I$.
\end{definition}
We denote the set of slice monogenic functions on $\Omega$ by $\mathcal{SM}(\Omega)$.
One can write the imaginary unit $I\in\mathbb S$ on the right with respect to the function $f$, and in this case we obtain the class of the so-called right slice monogenic functions. The two function theories contain different functions but they are substantially equivalent. We shall refer to slice monogenic functions, for short, when referring to the case of left slice monogenic functions.
\begin{example} Polynomials with coefficients written on the right  are (left) slice monogenic functions.
\end{example}

By writing the components of the functions in terms of four complex valued functions, one obtains a result, called Splitting Lemma, which is helpful for proving the next statement on the power series expansion of a slice monogenic function in a ball of radius $r$ centered at the origin denoted by $B_8(0,r)$.

\begin{lemma}\label{SLemma} If $f$ is a slice monogenic function
on the open set $\Omega\subseteq\mathbb O$, then for every $I_1 \in \mathbb{S}$, we can find
$I_2$ and $I_4$ in $\mathbb{S}$, such that there are four
holomorphic functions $F_1,F_2,G_1,G_2$ from $B_8(0,r)\cap \mathbb{C}_{I_1}$ to
$\mathbb{C}_{I_1}$ such that for any $z=u+vI_1$, it is
$$f_{I_1}(z)=F_1(z)+F_2(z)I_2+(G_1(z)+G_2(z)I_2)I_4.$$
\end{lemma}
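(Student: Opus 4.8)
\emph{Proof proposal.} The plan is to reproduce the Cayley--Dickson doubling of $\mathbb{O}$ in a form adapted to the complex structure carried by $\mathbb{C}_{I_1}$, and then to verify that the slice Cauchy--Riemann operator $\mathcal{C}_{I_1}:=\partial_u+I_1\partial_v$ from Definition \ref{slicemon} acts diagonally on the resulting decomposition of $f_{I_1}$. Throughout, slice monogenicity of $f$ means precisely that $\mathcal{C}_{I_1}f_{I_1}=0$ on $B_8(0,r)\cap\mathbb{C}_{I_1}$, which is a disc in $\mathbb{C}_{I_1}\cong\mathbb{C}$, so that ``holomorphic'' has its usual meaning there.

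First I would fix $I_1\in\mathbb{S}$, choose $I_2\in\mathbb{S}$ with $I_2\perp I_1$, and choose $I_4\in\mathbb{S}$ orthogonal to $\mathrm{span}_{\mathbb{R}}\{1,I_1,I_2,I_1I_2\}$. By Artin's theorem the two generators $I_1,I_2$ span an associative subalgebra $\mathbb{H}_1:=\mathbb{C}_{I_1}\oplus\mathbb{C}_{I_1}I_2\cong\mathbb{H}$, and the standard Cayley--Dickson argument (using that multiplication by a unit octonion is an $\mathbb{R}$-linear isometry and that $\mathbb{O}$ has no zero divisors) yields the orthogonal decomposition $\mathbb{O}=\mathbb{H}_1\oplus\mathbb{H}_1I_4$. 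Hence every $x\in\mathbb{O}$ has a unique expansion $x=F_1+F_2I_2+(G_1+G_2I_2)I_4$ with $F_1,F_2,G_1,G_2\in\mathbb{C}_{I_1}$. Applying this projection pointwise to $z\mapsto f_{I_1}(z)$ produces four functions $F_1,F_2,G_1,G_2\colon B_8(0,r)\cap\mathbb{C}_{I_1}\to\mathbb{C}_{I_1}$ which are real differentiable because $f$ is and the projections onto the summands are $\mathbb{R}$-linear; it remains to show each is holomorphic.

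The key observation is that left multiplication by $I_1$ is a complex structure on $\mathbb{O}$: by left alternativity $I_1(I_1\lambda)=(I_1I_1)\lambda=-\lambda$, so for $\lambda\in\mathbb{C}_{I_1}$ one gets $\mathcal{C}_{I_1}(\lambda v)=(\mathcal{C}_{I_1}\lambda)\,v$ whenever the triple $I_1,\lambda,v$ lies in an associative subalgebra of $\mathbb{O}$. This applies verbatim to $v=1$, to $v=I_2$ (the triple lies in $\mathbb{H}_1$), and to $v=I_4$ (the triple lies in the quaternionic subalgebra generated by $I_1$ and $I_4$). Thus the first three summands of $\mathcal{C}_{I_1}f_{I_1}$ are $\mathcal{C}_{I_1}F_1$, $(\mathcal{C}_{I_1}F_2)I_2$ and $(\mathcal{C}_{I_1}G_1)I_4$; since $\mathcal{C}_{I_1}f_{I_1}=0$ and the expansion is direct, $F_1,F_2,G_1$ are holomorphic.

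The genuinely delicate term is the last one, $(G_2I_2)I_4$: here the triple $I_1,I_2,I_4$ does \emph{not} lie in any associative subalgebra of $\mathbb{O}$, which is exactly the feature absent in the associative Clifford setting. For this term I would compute $I_1\cdot\big((\lambda I_2)I_4\big)$ in closed form using the Moufang identities together with left alternativity (equivalently, a direct check with the multiplication table), obtaining $\mathcal{C}_{I_1}\big((G_2I_2)I_4\big)=\big((\mathcal{C}'_{I_1}G_2)I_2\big)I_4$ for a constant-coefficient first-order operator $\mathcal{C}'_{I_1}$ of the same Cauchy--Riemann type as $\mathcal{C}_{I_1}$ (the non-associative rearrangement only amounts to fixing the orientation of the slice). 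Comparing with $\mathcal{C}_{I_1}f_{I_1}=0$ and using directness of the decomposition then forces $G_2$ to be holomorphic as well, and the converse implication follows by running the same identities backwards. I expect essentially all the difficulty of the argument to sit in this last step: getting the associator bookkeeping via the Moufang rules exactly right, so that $(G_2I_2)I_4$ really does contribute a Cauchy--Riemann equation for $G_2$ and not some spurious extra constraint.
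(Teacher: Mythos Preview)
The paper does not supply its own proof of this Splitting Lemma; it is quoted as a known result (cf.\ \cite{GS}), so there is no paper proof to compare against.

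Your strategy is the natural one and it works cleanly for $F_1,F_2,G_1$, but the step you yourself flag as delicate does not come out as you claim. If you actually carry out the Moufang/alternativity bookkeeping (or simply take $I_1=e_1$, $I_2=e_2$, $I_4=e_3$ and use the paper's multiplication table) you find, for every $\lambda\in\mathbb{C}_{I_1}$,
\[
I_1\big((\lambda I_2)I_4\big)=\big((-I_1\lambda)\,I_2\big)I_4,
\]
so that $\mathcal{C}_{I_1}\big((G_2I_2)I_4\big)=\big((\overline{\mathcal{C}}_{I_1}G_2)\,I_2\big)I_4$ with $\overline{\mathcal{C}}_{I_1}=\partial_u-I_1\partial_v$. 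The direct-sum argument then forces $G_2$ to be \emph{anti}-holomorphic, not holomorphic. Your parenthetical remark that ``the non-associative rearrangement only amounts to fixing the orientation of the slice'' is exactly on target---but reversing the orientation is precisely the difference between holomorphic and anti-holomorphic, so you cannot then conclude holomorphy of $G_2$. Concretely, with the choice above one gets $G_2=f_6+f_7e_1$, and the $e_6,e_7$ components of $\mathcal{C}_{I_1}f_{I_1}=0$ read $\partial_u f_6+\partial_v f_7=0$, $\partial_u f_7-\partial_v f_6=0$, the anti-CR system. Since the automorphism group of $\mathbb{O}$ acts transitively on triples $I_1,I_2,I_4\in\mathbb{S}$ with $I_2\perp I_1$ and $I_4\perp\mathrm{span}\{1,I_1,I_2,I_1I_2\}$, no other orthogonal choice repairs this.

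What is really happening is a convention mismatch between the left-acting operator $\partial_u+I_1\partial_v$ of Definition~\ref{slicemon} and the parenthesization $(G_1+G_2I_2)I_4$ carried over from the source. The statement becomes literally correct if the last summand is written as $G_2(I_2I_4)$ (then Artin applies to the two generators $I_1$ and $I_2I_4$, and your argument runs through unchanged), or equivalently if $G_2$ is replaced by its $\mathbb{C}_{I_1}$-conjugate. Once the computation is done honestly, your proof establishes this corrected version; it does not establish the lemma exactly as stated.
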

This result hints to a very nice feature of slice monogenic functions in a neighborhood of the origin, namely that they admit a power series expansion (with coefficients on the right) in terms of the octonionic variable, see \cite{GS}:
\begin{theorem} The function $f:B_8(0,r) \to \mathbb{O}$ is slice monogenic, if and only if it has a series
expansion of the form
$$f(x)=\sum_{n=0}^\infty x^n \frac{1}{n!} \frac{\partial^n f}{\partial x^n}(0).$$
\end{theorem}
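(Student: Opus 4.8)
The plan is to prove the two implications separately, handling non-associativity throughout by the alternative laws — equivalently, by Artin's theorem that any subalgebra of $\mathbb{O}$ generated by two elements is associative. For the \emph{sufficiency}, suppose $f(x)=\sum_{n\ge 0}x^{n}a_{n}$ converges on $B_{8}(0,r)$, i.e. $\limsup_{n}|a_{n}|^{1/n}\le 1/r$. I would first record that this series, and the series obtained by differentiating it termwise in the real coordinates $x_{0},\dots,x_{7}$, all converge locally uniformly on $B_{8}(0,r)$, so that $f\in C^{\infty}(B_{8}(0,r),\mathbb{O})$ and the operator of Definition~\ref{slicemon} may be applied term by term; it then suffices to check that each monomial $x^{n}a_{n}$ is slice monogenic. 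Fixing $I\in\mathbb{S}$ and writing $z=u+Iv$, one computes $\tfrac12(\partial_{u}+I\partial_{v})(z^{n}a_{n})=\tfrac12\big((nz^{n-1})a_{n}+I((nz^{n-1}I)a_{n})\big)$; since $I$ and $z^{n-1}$ both lie in the commutative associative subalgebra $\mathbb{C}_{I}$, the product $I((nz^{n-1}I)a_{n})$ may be reassociated inside the associative subalgebra generated by the two elements $I$ and $a_{n}$, giving $(I(nz^{n-1}I))a_{n}=-nz^{n-1}a_{n}$, so the whole expression vanishes. Summing, $f$ is slice monogenic on $B_{8}(0,r)$.

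For the \emph{necessity}, fix $I\in\mathbb{S}$ and consider $f_{I}$ on the disc $D:=B_{8}(0,r)\cap\mathbb{C}_{I}$. By the Splitting Lemma~\ref{SLemma}, $f_{I}$ is an $\mathbb{R}$-linear combination of classically holomorphic $\mathbb{C}_{I}$-valued functions, hence real-analytic on $D$; expanding $f_{I}(u+Iv)=\sum_{j,k\ge 0}u^{j}v^{k}b_{jk}$ near $0$ and imposing $\partial_{u}f_{I}=-I\partial_{v}f_{I}$ yields the recursion $(j+1)b_{j+1,k}=-(k+1)Ib_{j,k+1}$, which — after a short induction whose only non-associative step reassociates inside the subalgebra generated by $I$ and $b_{0,n}$ — forces $b_{jk}=\binom{j+k}{j}\big((-I)^{j}b_{0,j+k}\big)$. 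Summing over $n=j+k$ and applying the binomial theorem in $\mathbb{C}_{I}$ (using $v-uI=-Iz$) collapses the expansion to $f_{I}(z)=\sum_{n\ge 0}z^{n}a_{n}$ with $a_{n}:=(-I)^{n}b_{0,n}=\tfrac1{n!}\partial_{u}^{n}f_{I}(0)\in\mathbb{O}$; equivalently, one may simply observe that left multiplication by $\mathbb{C}_{I}$ turns $\mathbb{O}$ into a four-dimensional associative $\mathbb{C}_{I}$-vector space, in which a holomorphic $\mathbb{O}$-valued function automatically carries such a Taylor series. Cauchy's estimates for the holomorphic $f_{I}$ give $|a_{n}|\le\rho^{-n}\sup_{|z|=\rho}|f_{I}(z)|$ for every $\rho<r$, hence $\limsup_{n}|a_{n}|^{1/n}\le 1/r$.

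Now set $g(x):=\sum_{n\ge 0}x^{n}a_{n}$. By the last bound it converges on $B_{8}(0,r)$, hence by the sufficiency part it is slice monogenic there, and $g_{I}=f_{I}$ on $D$ by construction. Since $B_{8}(0,r)$ is a slice domain, the identity principle for slice monogenic functions \cite{GS} gives $g=f$ on all of $B_{8}(0,r)$; in particular the coefficients $a_{n}$ do not depend on the chosen slice $I$. Finally, differentiating $g$ termwise with the slice derivative $\partial_{x}$ — which restricts to $\partial_{u}$ on each $\mathbb{C}_{I}$ by holomorphy, so that $\partial_{x}(x^{n}a_{n})=nx^{n-1}a_{n}$ — and evaluating at $0$ gives $\tfrac{\partial^{n}f}{\partial x^{n}}(0)=\tfrac{\partial^{n}g}{\partial x^{n}}(0)=n!\,a_{n}$, which is exactly the asserted form.

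The delicate point is the necessity direction, and within it the passage to an honest power series in the single octonionic variable: the decomposition produced by the Splitting Lemma carries the bracketed product $(\,\cdot\,I_{2})I_{4}$, and one cannot naively pull a power of $z$ in front of a single octonionic coefficient, since the reassociations this would require involve three independent imaginary units and genuinely fail in $\mathbb{O}$. The remedy is to never leave the associative realm of two-generated subalgebras — either by first extracting from real-analyticity the collapse to a series in $z$ alone (where every product to be reassociated involves only $\mathbb{C}_{I}$-elements together with one further octonion), or by invoking the associative left-$\mathbb{C}_{I}$-module structure on $\mathbb{O}$ — and by keeping careful track that this really is the case at each step. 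The slice-independence of the coefficients I would not verify by direct computation but deduce from the identity principle once $g$ is known to be globally slice monogenic.
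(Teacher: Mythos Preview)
The paper does not give its own proof of this theorem; it is stated with a reference to \cite{GS} and no argument is supplied. So there is nothing in the paper to compare against.

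Your proof is correct and is essentially the standard argument of Gentili--Struppa in \cite{GS}: the Splitting Lemma reduces the restriction $f_{I}$ to four $\mathbb{C}_{I}$-holomorphic components, from which one extracts a power series in $z$ with octonionic coefficients, extends slice monogenically, and invokes the identity principle. Your extra care with Artin's theorem at each reassociation step is exactly what is needed in the octonionic setting and is handled cleanly. One small imprecision: you write ``$g_{I}=f_{I}$ on $D$ by construction,'' but the real-analytic expansion you set up is a priori only valid near~$0$; you should say explicitly that the classical identity principle for the $\mathbb{C}_{I}$-holomorphic components (via the Splitting Lemma) extends the equality from a neighbourhood of the origin to all of the connected disc $D$, before you invoke the slice identity principle to pass to all of $B_{8}(0,r)$. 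This is routine, but worth one sentence.
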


Slice monogenic functions over $\mathbb O$ can be defined also following \cite{GP} in which the authors consider the more general case of functions with values in a real alternative algebra, but for a special subclass of functions, the so-called slice functions. The idea of considering slice functions over octonions (of intrinsic type) goes back to \cite{DS}, see also the translation in \cite{CoSaStr_book}. When needed, to distinguish between the two approaches followed in \cite{GS} and \cite{GP}, respectively, we shall call the corresponding functions strong slice monogenic in the first case and weak slice monogenic in the latter case. Both classes of functions are considered and compared over the octonions in the paper \cite{DRS} where the general case of functions in $n$ variables is treated. We remark that the case of several octonionic variables has also been studied in \cite{RY}.

In both cases, the functions have the property of satisfying the so-called representation formula and so they can be reconstructed by the values on a complex slice $\mathbb C_I$, see \cite{DRS} and \cite{GP,RY} for the context of slice functions.
We do not enter into the technical definitions needed for the statement, because for the particular domains that we shall consider, namely the unit ball, the half-space of octonions with positive real part and for the strip, the hypothesis are satisfied.
\begin{theorem} (Representation Formula)\label{representation}
Let $\Omega$ be an axially symmetric slice domain. Then for every $I,J\in\mathbb S$ the following formula holds:
\[
f(x+Iy)=\frac 12\left[f(x+yJ)+f(x-yJ)\right] +\frac 12 I\left[J(f(x-yJ)-f(x+yJ))\right].
\]
\end{theorem}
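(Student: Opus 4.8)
The plan is to show that the right-hand side of the asserted identity, read as a function of the variable $x+yI$, is itself slice monogenic on $\Omega$ and coincides with $f$ on one complex slice, and then to conclude by the identity principle. I would fix $J\in\mathbb S$ and, for $x+yI\in\Omega$ with $I\in\mathbb S$, introduce
$$
g(x+yI):=\frac12\bigl(f(x+yJ)+f(x-yJ)\bigr)+\frac12\,I\bigl(J\bigl(f(x-yJ)-f(x+yJ)\bigr)\bigr).
$$
The first point is that $g$ is a well-defined (real differentiable) function on $\Omega$: axial symmetry gives $x\pm yJ\in[x+yI]\subseteq\Omega$, and a direct check, using only $\mathbb R$-bilinearity of the product and left alternativity $K(Kw)=(KK)w=-w$, shows that the value does not change when $x+yI$ is rewritten as $x+(-y)(-I)$; thus on each slice $g_I$ is given by the single formula above, real differentiable in $(x,y)$, and $g=f$ on $\Omega\cap\mathbb R$. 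I would also record, for later use, that an axially symmetric slice domain necessarily meets the real axis: choosing $x+yJ\in\Omega$ with $y\neq0$, the point $x-yJ$ lies in $\Omega$ by axial symmetry, and every path joining them inside the connected set $\Omega\cap\mathbb C_J$ must cross $\mathbb R$; hence $\Omega\cap\mathbb R$ is a nonempty open subset of $\mathbb R$, and each $\Omega\cap\mathbb C_I$ is a connected open subset of $\mathbb C_I$ containing a nondegenerate real interval.

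Next I would verify that $g=f$ on $\Omega\cap\mathbb C_J$: setting $I=J$ and using $J(Jw)=-w$, the second summand collapses to $\frac12\bigl(f(x+yJ)-f(x-yJ)\bigr)$, and adding it to the first term gives $f(x+yJ)$. The core step is then to show that $g$ is slice monogenic, i.e.\ that $\frac12(\partial_x+I\partial_y)g_I=0$ on $\Omega\cap\mathbb C_I$ for every $I\in\mathbb S$. Writing $A=f(x+yJ)$ and $B=f(x-yJ)$, the holomorphy of $f_J$ on $\mathbb C_J$ reads $\partial_u f_J=-J\,\partial_v f_J$, which after left-multiplication by $J$ and left alternativity becomes $\partial_v f_J=J\,\partial_u f_J$; by the chain rule this gives $\partial_x A=P$, $\partial_y A=JP$, $\partial_x B=Q$, $\partial_y B=-JQ$ for suitable octonion-valued $P,Q$. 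Substituting into $g_I=\frac12(A+B)+\frac12\,I\bigl(J(B-A)\bigr)$, differentiating, and collapsing the triple products $I\bigl(I(\cdot)\bigr)$ and $J\bigl(J(\cdot)\bigr)$ by left alternativity, all terms cancel, so $g\in\mathcal{SM}(\Omega)$. It is worth stressing that no product of three pairwise distinct octonions gets reassociated, so Artin's theorem is not needed at this stage.

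Finally, the function $h:=f-g$ lies in $\mathcal{SM}(\Omega)$ and vanishes on $\Omega\cap\mathbb C_J$, hence on a nondegenerate real interval. For each $I\in\mathbb S$, splitting $h_I$ on the connected domain $\Omega\cap\mathbb C_I$ into $\mathbb C_I$-valued holomorphic components (an analogue on slice domains of Lemma~\ref{SLemma}) and applying the classical one-variable identity theorem --- the components vanishing on a set with an accumulation point --- yields $h_I\equiv0$. Since $I$ was arbitrary, $f=g$ on $\Omega$, which is precisely the representation formula.

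The step I expect to be the main obstacle is proving that $g$ is slice monogenic, because the differentiation has to be performed while keeping strict track of the bracketing of the octonionic products; what makes it succeed is the observation that every product that gets reassociated contains a repeated factor, so left alternativity alone suffices and no appeal to Artin's theorem is required. A secondary delicate point is the identity-principle step, which relies on $\Omega$ meeting the real axis --- exactly where the axial-symmetry hypothesis is indispensable. For the concrete domains treated later in the paper there is a shortcut on the unit ball: the Taylor representation $f(x)=\sum_{n\ge0}x^n\frac1{n!}\frac{\partial^n f}{\partial x^n}(0)$, together with the fact that $(x+yK)^n\in\mathbb C_K$ expands as $a_n(x,y)+K\,b_n(x,y)$ with real $a_n,b_n$ independent of $K$, gives directly $f(x+yK)=\alpha+K\beta$ with $\alpha,\beta\in\mathbb O$ independent of $K\in\mathbb S$, and the representation formula then follows by evaluating at $K=I$ and $K=\pm J$.
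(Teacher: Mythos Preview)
The paper does not supply a proof of this theorem: it is quoted from the literature (see the references to \cite{DRS} and \cite{GP,RY} in the paragraph preceding the statement), so there is no ``paper's own proof'' to compare against.

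That said, your argument is correct and is essentially the standard route used in the quaternionic and slice-regular literature, adapted carefully to the non-associative setting. Your key observation --- that every reassociation arising in the verification of $(\partial_x+I\partial_y)g_I=0$ involves a repeated left factor, so that left alternativity alone suffices --- is exactly the point that makes the quaternionic proof carry over to octonions without invoking Artin's theorem. The well-definedness check under $(y,I)\mapsto(-y,-I)$, the verification that $g_J=f_J$, and the slice-monogenicity computation are all sound. The identity-principle step via the Splitting Lemma on the connected set $\Omega\cap\mathbb C_I$ is the standard conclusion; your remark that axial symmetry forces $\Omega\cap\mathbb R\neq\emptyset$ is precisely what is needed to propagate vanishing from the slice $\mathbb C_J$ to every other slice. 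Your final remark about the shortcut on $B_8(0,1)$ via the Taylor expansion is also correct and matches how the representation formula is typically first seen on balls.
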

The Cauchy formula was proven in \cite{GPR}. As in the classic monogenic case, it is formally inspired by the formula for functions Clifford algebra-valued, but as in the monogenic setting it also requires a careful use of the parentheses:
\begin{theorem}
Let $\Omega$ be an axially symmetric bounded set in $\mathbb O$, $I\in\mathbb S$ and let $\Omega\cap\mathbb C_I$ have a smooth boundary. Let $f$ be slice monogenic in $\Omega$. Then for every $x\in\Omega$:
$$
f(x)=\frac{1}{2\pi}\int_{\partial (\Omega\cap\mathbb C_I)} S_L^{-1}(s,x)\left[I^{-1} ds f(s)\right],
$$
where $S_L^{-1}(s,x)=-(x^2-2{\rm {\rm Re}}(s)x+|s|^2)^{-1}(x-\bar s)$ is the slice monogenic Cauchy kernel.
\end{theorem}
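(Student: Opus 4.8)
The plan is to reduce the identity to the classical one-variable Cauchy formula on the slice $\mathbb{C}_I$ and then to propagate it to all of $\Omega$ via the Representation Formula, watching the parentheses carefully since $\mathbb{O}$ is not associative.

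\emph{The slice case.} First I would take $x\in\Omega\cap\mathbb{C}_I$. Since $\mathbb{C}_I$ is commutative, $x^2-2\,{\rm Re}(s)\,x+|s|^2=(x-s)(x-\bar s)$ for $s,x\in\mathbb{C}_I$, so that
$$
S_L^{-1}(s,x)=-\big[(x-s)(x-\bar s)\big]^{-1}(x-\bar s)=-(x-s)^{-1}=(s-x)^{-1}
$$
is the ordinary Cauchy kernel in $\mathbb{C}_I$. Viewing $\mathbb{O}$ as a four-dimensional complex vector space under the complex structure given by left multiplication by $I$ — for which left multiplication by $\mathbb{C}_I=\mathbb{R}+\mathbb{R}I$ is exactly scalar multiplication — the restriction $f_I$ becomes a holomorphic $\mathbb{C}^4$-valued map on $\Omega\cap\mathbb{C}_I$; this is exactly what the Splitting Lemma records by exhibiting the four holomorphic $\mathbb{C}_I$-components $F_1,F_2,G_1,G_2$. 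Applying the classical Cauchy formula component-wise gives $f(x)=\frac{1}{2\pi}\int_{\partial(\Omega\cap\mathbb{C}_I)}\big((s-x)^{-1}I^{-1}ds\big)\,f(s)$, and since $(s-x)^{-1}$ and $I^{-1}ds$ both lie in $\mathbb{C}_I$ the left alternative law yields $\big((s-x)^{-1}I^{-1}ds\big)f(s)=(s-x)^{-1}\big[(I^{-1}ds)\,f(s)\big]=S_L^{-1}(s,x)\big[I^{-1}ds\,f(s)\big]$, which is the asserted right-hand side for $x\in\mathbb{C}_I$.

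\emph{Propagation to $\Omega$.} Let $g$ denote the function on $\Omega$ defined by the right-hand side of the claim; by the previous step $g=f$ on $\Omega\cap\mathbb{C}_I$. Since $\Omega$ is an axially symmetric slice domain, the Representation Formula provides an identity principle — a slice monogenic function on $\Omega$ is determined by its restriction to any one slice — so it would suffice to prove that $g$ itself is slice monogenic on $\Omega$. Here one would use that, for fixed $s\in\mathbb{C}_I$, the kernel $x\mapsto S_L^{-1}(s,x)$ is left slice monogenic and, on each slice $\mathbb{C}_J$, takes values in the associative subalgebra $\langle I,J\rangle$, so that the slice-monogenicity check reduces to a computation inside an associative algebra. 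Alternatively one can bypass this and, writing $x=u+Ky$, apply the Representation Formula with directions $K$ and $I$ to express $f(u+Ky)$ through the slice values $f(u\pm yI)$, substitute the formula of the first step for those, and use that $S_L^{-1}(s,\cdot)$ obeys the same Representation Formula in $x$ to collapse the combination back into $\frac{1}{2\pi}\int_{\partial(\Omega\cap\mathbb{C}_I)}S_L^{-1}(s,u+Ky)\big[I^{-1}ds\,f(s)\big]$.

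\emph{The main obstacle.} The delicate point is exactly this propagation. Although $S_L^{-1}(s,x)$ is slice monogenic in $x$, the product $S_L^{-1}(s,x)\big[I^{-1}ds\,f(s)\big]$ need not be, because $f(s)$ runs over all of $\mathbb{O}$ while the outer left multiplications — by the $\mathbb{C}_I$-valued kernel and, after the Representation Formula, by the constant unit $K$ — do not associate with it; the naive manipulations leave behind associators of the type $[\,K,\ \cdot\ ,\ I^{-1}ds\,f(s)\,]$ that do not vanish pointwise in $s$. One therefore has to organise the computation — exploiting flexibility, the two alternative laws, Artin's theorem, and above all the Moufang identities, and keeping the parentheses exactly as written in the statement — so that these corrections cancel after integration, equivalently so that $g$ is slice monogenic. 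The remaining steps are routine but bracket-sensitive: checking that $S_L^{-1}(s,x)=-(x^2-2\,{\rm Re}(s)x+|s|^2)^{-1}(x-\bar s)$ is slice monogenic in $x$ and satisfies the Representation Formula in $x$, and interchanging the classical Cauchy formula with the $\mathbb{C}_I$-scalar $1$-form $I^{-1}ds$ in the first step.
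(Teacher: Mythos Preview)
The paper does not give its own proof of this theorem: it is stated as background and attributed to \cite{GPR} (Ghiloni--Perotti--Recupero, ``Noncommutative Cauchy integral formula''). So there is no in-paper argument to compare your sketch against; what follows is an assessment of your sketch on its own terms.

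Your first step is correct and is the standard reduction. For $x,s\in\mathbb{C}_I$ the factorisation $x^2-2\,{\rm Re}(s)x+|s|^2=(x-s)(x-\bar s)$ collapses $S_L^{-1}(s,x)$ to $(s-x)^{-1}$, and the Splitting Lemma reduces the slice identity to the classical Cauchy formula applied to four $\mathbb{C}_I$-valued holomorphic components. The rebracketing $\big((s-x)^{-1}I^{-1}ds\big)f(s)=(s-x)^{-1}\big[I^{-1}ds\,f(s)\big]$ is legitimate by Artin's theorem, since $(s-x)^{-1}$ and $I^{-1}ds$ both lie in $\mathbb{C}_I=\mathbb{R}[I]$, so the whole expression lives in the associative subalgebra generated by the two elements $I$ and $f(s)$.

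The gap is precisely where you locate it, and you do not close it. Carrying out your ``alternative'' route explicitly: with $x=u+Ky$, the Representation Formula for $f$ together with the slice identity gives
\[
f(u+Ky)=\frac{1}{2\pi}\int_{\partial(\Omega\cap\mathbb{C}_I)}\Big\{\tfrac{1}{2}(\alpha+\beta)\,C+\tfrac{1}{2}K\big[I\big((\beta-\alpha)C\big)\big]\Big\},
\]
where $\alpha=(s-(u+yI))^{-1}$, $\beta=(s-(u-yI))^{-1}\in\mathbb{C}_I$ and $C=I^{-1}ds\,f(s)$. The desired right-hand side, using the Representation Formula for $S_L^{-1}(s,\cdot)$, is the same integral with integrand $\big\{\tfrac{1}{2}(\alpha+\beta)+\tfrac{1}{2}K[I(\beta-\alpha)]\big\}C$. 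Matching the two requires $\big(K\,[I(\beta-\alpha)]\big)C=K\big[I\big((\beta-\alpha)C\big)\big]$, i.e.\ the vanishing of the associator $[K,\delta,C]$ with $\delta\in\mathbb{C}_I$. This involves the three independent directions $K$, $I$, $f(s)$ and is \emph{not} a consequence of alternativity, flexibility, or the Moufang identities taken pointwise; your final paragraph asserts that ``these corrections cancel after integration'' but gives no mechanism for the cancellation. That mechanism is the actual content of the octonionic proof in \cite{GPR}, and it is what is missing here. Until that step is supplied, the sketch establishes the formula only for $x\in\Omega\cap\mathbb{C}_I$.
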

Now we have the necessary toolkit available to study octonionic Hilbert spaces of monogenic and slice monogenic functions in the following two sections which will represent the core-pieces of this paper.

\section{Octonionic Hardy and Bergman spaces in the monogenic setting}

The aim of this section and the following one is to present some basic results on Hardy and Bergman spaces of octonionic monogenic and slice monogenic functions. The basic theory of their analogues in the associative Clifford analysis are described for example in \cite{BDS}.
\par\medskip\par
In all that follows let us suppose that $\Omega \subset \mathbb{O}$ is a simply-connected orientable domain with a strongly Lipschitz boundary, denoted by $\Sigma=\partial \Omega$, where the outward directed unit normal field denoted by $n(y)$ exists at almost every point $y$ of the boundary $\partial \Omega$.
\par\medskip\par
Before we start we recall the following basic definition from \cite{GH}, see also \cite{QR2021}:
\begin{definition}\label{defHilbert} An octonionic Hilbert space $H$ is a left $\mathbb{O}$-module with an octonionic valued inner product $(\cdot,\cdot): H \times H \rightarrow \mathbb{O}$ such that $(H,\langle \cdot,\cdot\rangle_0)$ is a real Hilbert space, where $\langle \cdot,\cdot\rangle_0 :={\rm Re}(\cdot,\cdot)$. The octonion-valued inner product is supposed to satisfy for all $f,g,h \in H$ and for all $\alpha \in \mathbb{O}$ the following rules
\begin{itemize}
\item[(i)] $(f+g,h) = (f,h) + (g+h)$ (additivity)
\item[(ii)] $(g,f) = \overline{(f,g)}$ (Hermitian property)
\item[(iii)] $(f,f) \in \mathbb{R}^{\ge 0}$ and $(f,f)=0$ iff $f=0$ (strict positivity)
\item[(iv)] $(fr,g) = (f,g) r$ for all real $r \in \mathbb{R}$ ($\mathbb{R}$-homogeneity)
\item[(v)] $(f\alpha,f) = (f,f) \alpha$
\item[(vi)] $\langle f \alpha,g \rangle_0 = {\rm Re}\{(f \alpha,g)\} = {\rm Re}\{(f,g)\alpha\}$ ($\mathbb{O}$-para-linearity)
\end{itemize}
\end{definition}
Canonically, this definition leads to the consideration of octonion-valued functionals defined by ${\cal{T}}: H \to \mathbb{O}, {\cal{T}}(f) := (f,g)$. However, as explained in \cite{QR2021}, the requirement of a functional being $\mathbb{O}$-linear in the classical sense, demanding that ${\cal{T}}(f\alpha)= {\cal{T}}(f) \alpha$ is too strong for developing a powerful theory of octonionic Hilbert function spaces. Following \cite{QR2021} and \cite{QR2022} the adequate way is to demand more weakly that a functional is only $\mathbb{O}$-para-linear in the sense of axiom (vi) of the preceding definition.

\subsection{Hardy spaces of octonionic monogenic functions}

In this section, we introduce one notion of a generalized octonionic Hardy space $H^2(\Omega, \mathbb{O})$:
\begin{definition} The octonionic Hardy space $H^2(\Omega, \mathbb{O})$ is the closure of the set of $L^2(\partial \Omega)$-octonion-valued functions that are left octonionic monogenic functions inside of $\Omega$ and have a continuous extension to the boundary $\partial \Omega$.
\end{definition}
We now need a notion of octonion valued inner product so we follow \cite{ConKra2021} and we give the following definition, which is inspired by \cite{WL2018} and \cite{WL2020}:
\begin{definition} Let $\Omega \subset \mathbb{O}$ be a domain as above.
For octonion-valued functions $f,g \in L^2(\partial \Omega)$ one defines the following $\mathbb{R}$-linear $\mathbb{O}$-valued inner product
$$
(f,g)_{\partial \Omega} := \frac{3}{\pi^4} \int\limits_{\partial \Omega}(\overline{n(x)g(x)})\; (n(x)f(x)) |d\sigma(x)|,
$$
where again $n(x)$ denotes the exterior unit normal at a boundary point $x \in \partial \Omega$ and where $|d\sigma(x)|$ stands for the scalar-valued Lebesgue surface element.
\end{definition}

By a direct verification we show
\begin{proposition}\label{criteriaHardy}
The set $(H^2(\Omega, \mathbb{O}),(\cdot,\cdot)_{\partial \Omega})$ is an octonionic Hilbert space in the sense of Definition~\ref{defHilbert}.
\end{proposition}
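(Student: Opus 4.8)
The plan is to verify, one after another, the six defining properties (i)--(vi) of Definition~\ref{defHilbert} for the pair $\big(H^2(\Omega,\mathbb{O}),(\cdot,\cdot)_{\partial\Omega}\big)$, and then to check that $\langle\cdot,\cdot\rangle_0:={\rm Re}(\cdot,\cdot)_{\partial\Omega}$ turns $H^2(\Omega,\mathbb{O})$ into a real Hilbert space (the required left $\mathbb{O}$-module structure being the pointwise one inherited from $L^2(\partial\Omega)$).

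The $\mathbb{R}$-linear part is routine. Additivity (i) and $\mathbb{R}$-homogeneity (iv) follow from the $\mathbb{R}$-bilinearity (distributivity) of the octonionic product and the centrality of the reals, which let finite sums and real factors pass in and out of the integral while $n(x)\big(f(x)r\big)=\big(n(x)f(x)\big)r$. The Hermitian symmetry (ii) uses that octonionic conjugation is an involutive anti-automorphism, $\overline{ab}=\overline{b}\,\overline{a}$ and $\overline{\overline a}=a$: applying it to the integrand of $(f,g)_{\partial\Omega}$ gives $\overline{n(x)f(x)}\,\big(n(x)g(x)\big)$, whose integral is exactly $(g,f)_{\partial\Omega}$.

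Positivity and completeness come from the norm-one weight. Since $\overline{w}w=|w|^2$ for every $w\in\mathbb{O}$ and $|n(x)|=1$ almost everywhere (it is a unit normal), the diagonal integrand collapses to a scalar, $\overline{n(x)f(x)}\,\big(n(x)f(x)\big)=|n(x)f(x)|^2=|f(x)|^2$, so $(f,f)_{\partial\Omega}=\tfrac{3}{\pi^4}\int_{\partial\Omega}|f(x)|^2\,|d\sigma(x)|=\tfrac{3}{\pi^4}\|f\|^2_{L^2(\partial\Omega)}\ge 0$; this vanishes precisely when $f=0$ a.e.\ on $\partial\Omega$, i.e.\ when $f=0$ in $H^2(\Omega,\mathbb{O})$, since $H^2(\Omega,\mathbb{O})$ is by construction a norm-closed subspace of $L^2(\partial\Omega)$ — this is (iii). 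For the same reason ${\rm Re}\big(\overline{n(x)g(x)}\,(n(x)f(x))\big)=\langle n(x)g(x),n(x)f(x)\rangle_{\mathbb{R}^8}=\langle f(x),g(x)\rangle_{\mathbb{R}^8}$, using that left multiplication by the unit octonion $n(x)$ is an orthogonal transformation of $\mathbb{R}^8\cong\mathbb{O}$; hence $\langle f,g\rangle_0=\tfrac{3}{\pi^4}\,\langle f,g\rangle_{L^2(\partial\Omega)}$, and a norm-closed subspace of the real Hilbert space $L^2(\partial\Omega)$ is itself a real Hilbert space.

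The two remaining properties (v) and (vi) are where non-associativity bites, and I expect this to be the main obstacle. The available tools are flexibility $(xy)x=x(yx)$, left/right alternativity, the conjugation identity $\overline{w}(wx)=(\overline{w}w)x=|w|^2x$ (immediate from $\overline{w}=2\,{\rm Re}(w)-w$ together with $w(wx)=w^2x$), the associativity of the trace form ${\rm Re}\big((ab)c\big)={\rm Re}\big(a(bc)\big)$, and — since $\overline{n(x)}=n(x)^{-1}$, so a factor, its conjugate and its inverse all occur in the products — the Moufang identities. For (vi) one writes ${\rm Re}\big((f\alpha,g)_{\partial\Omega}\big)=\tfrac{3}{\pi^4}\int_{\partial\Omega}{\rm Re}\big(\overline{n(x)g(x)}\,(n(x)(f(x)\alpha))\big)\,|d\sigma(x)|$ and, via ${\rm Re}((ab)c)={\rm Re}(a(bc))$ and $\overline{n}(n(f\alpha))=f\alpha$, reduces the integrand pointwise to ${\rm Re}\big(\overline{g}\,(f\alpha)\big)$; the task is then to show this equals ${\rm Re}\big((f,g)_{\partial\Omega}\alpha\big)=\tfrac{3}{\pi^4}\int_{\partial\Omega}{\rm Re}\big((\overline{n(x)g(x)}\,(n(x)f(x)))\alpha\big)\,|d\sigma(x)|$, the crux being to control the associator $[\,n(x),f(x),\alpha\,]$ that appears when the norm-one weight is commuted past $\alpha$ and to show that, after taking ${\rm Re}$ and integrating, this defect contributes nothing (here one uses that this associator has zero real part and is orthogonal to $n(x)$, $f(x)$ and $\alpha$). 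Property (v) is the diagonal case $g=f$: then $(f,f)_{\partial\Omega}$ is a real scalar, so $\alpha$ moves through the integral freely, and one applies $\overline{w}(w\alpha)=|w|^2\alpha$ with $w=n(x)f(x)$. Once (i)--(vi) and the real Hilbert-space property are in place, the proposition follows from Definition~\ref{defHilbert}. It is precisely the interplay of the norm-one weight factor with right multiplication by a general octonion, under non-associativity, that makes (v) and (vi) delicate, and this is exactly the feature that forces Definition~\ref{defHilbert} to retain only the ${\rm Re}$ and the $\mathbb{O}$-para-linearity.
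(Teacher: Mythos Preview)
Your treatment of (i)--(iv) and of the real Hilbert-space structure is fine and matches the paper's. The gap is at (v), and it is the same one the paper glosses over. You set $w=n(x)f(x)$ and invoke $\overline{w}(w\alpha)=|w|^2\alpha$, but the integrand of $(f\alpha,f)_{\partial\Omega}$ is
\[
\overline{n(x)f(x)}\,\big(n(x)\,(f(x)\alpha)\big),
\]
and by non-associativity $n(x)(f(x)\alpha)\neq(n(x)f(x))\alpha$ in general, so your alternativity identity does not apply to what is actually written. A concrete pointwise check: with $n=e_1$, $f=e_2$, $\alpha=e_3$ one gets $\overline{nf}\,(n(f\alpha))=-e_3$, while $|f|^2\alpha=e_3$. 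The paper's justification of (v) (``the expression under the integral is generated by two elements $\alpha$ and $F=nf$, so Artin's theorem applies'') makes precisely the same tacit move: it treats the second factor as $F\alpha$ rather than $n(f\alpha)$. So neither argument, as written, closes (v); what is really needed is either a genuine Moufang-type identity handling $(\overline{f}\,\overline{n})(n(f\alpha))$, or an integration argument showing the defect $\overline{nf}\,[n,f,\alpha]$ averages to zero over $\partial\Omega$.

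For (vi) you correctly reduce $\langle f\alpha,g\rangle_0$ to $\int_{\partial\Omega}{\rm Re}\big(\overline{g}(f\alpha)\big)\,|d\sigma|=\int_{\partial\Omega}{\rm Re}\big((\overline{g}f)\alpha\big)\,|d\sigma|$ and honestly flag the remaining associator obstruction. The paper stops at exactly the same expression and declares it equal to ${\rm Re}\big((f,g)_{\partial\Omega}\alpha\big)$, but since $(f,g)_{\partial\Omega}=\int\overline{ng}(nf)$ rather than $\int\overline{g}f$, this last identification again hides the step you singled out; the same $n=e_1$, $f=e_2$, $g=e_6$, $\alpha=e_3$ gives ${\rm Re}((\overline{g}f)\alpha)=1$ but ${\rm Re}((\overline{ng}(nf))\alpha)=-1$ pointwise. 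In short: your proposal and the paper's proof are at the same level of completeness, and the genuine missing idea in both is the control of the associator $[n(x),f(x),\alpha]$ that appears when the weight $n(x)$ meets right multiplication by $\alpha$.
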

\begin{proof}
\begin{itemize}
\item[(i)] The additivity follows by the usual additivity property of the integral.
\item[(ii)] As already explictly shown in \cite{ConKra2021} this inner product satisfies the Hermitian property $\overline{(f,g)} = (g,f)$ in terms of the octonionic conjugation.
\item[(iii)] As also explicitly shown in \cite{ConKra2021} we have $(f,f)   = \|f\|^2_{L^2(\partial \Omega)}$ so in particular $(f,f)=0$ iff $f=0$ because $f$ needs to be continuous in particular as an element belonging to $H^2(\Omega)$.
\item[(iv)]  Since each real number commutes with any octonion, this property follows directly.
\item[(v)] This is a consequence of Artin's theorem. The expression under the integral is generated by two octonionic elements, namely by $\alpha$ and by $F(x)$ where $F(x):= n(x) f(x)$.
\item[(vi)] As also shown in \cite{ConKra2021} we have $\langle f,g\rangle_0 = \langle n f, n g \rangle_0$ for any $n$ with $|n|=1$. To prove this we exploited the property $\langle ab,c \rangle = \langle b, \overline{a}c\rangle$ for all $a,b,c \in \mathbb{O}$, see Corollary 3.5 in \cite{XLT2008} regarding the standard Euclidean inner product on $\mathbb{O}$ (defined by $\langle a,b\rangle:= {\rm Re}(a \overline{b}) = \sum_{i=0}^7 a_i b_i$), together with Artin's theorem.

So in fact $\langle f, g \rangle_0$ can be rewritten in the form
$$
\langle f , g \rangle_0 = \int\limits_{\partial \Omega} {\rm Re}(\overline{g(x)} f(x)) |d\sigma(x)|.
$$
Now, in view of the fact that for any $a,b,c\in\mathbb O$
\begin{equation}\label{1.4e}
{\rm Re} ((ab)c)={\rm Re} (a(bc))={\rm Re} (abc),
\end{equation}
(see Proposition 1.4 (e) in \cite{dieckmann}), we have that $$\langle f \alpha, g \rangle_0 = \int\limits_{\partial \Omega} {\rm Re}(\overline{g(x)} (f(x)\alpha)) |d\sigma(x)| = \int\limits_{\partial \Omega} {\rm Re}((\overline{g(x)} f(x))\alpha) |d\sigma(x)|$$
and this is the required para-linearity property.
\end{itemize}
\end{proof}
In complex or Clifford analysis where we have associativity the integrand simplifies to the expression $\overline{g(x)}f(x)$
and so one gets the standard definition of the Hardy space inner product; the inner product then is independent of $n(x)$. The normal field $n(x)$  is naturally canceled out in an associative setting.
\par\medskip\par
In the octonionic context the presence of the normal field $n(x)$ inside these brackets however really makes an important difference. It namely allows us to recognize the Cauchy transform and to use Cauchy's integral formula in the framework that makes use of this inner product. We will see more clearly in the following parts of this paper that the implementation of the normal field inside the definition of the inner product is a substantial intrinsic feature in the non-associative setting and not only in the monogenic context.
\begin{remark}
In the case where $\Omega = B_8(0,1) = \{x \in \mathbb{R}^8 \mid |x| < 1\}$ is the unit ball we have $n(x) = x$, and hence we re-obtain the following particular definition proposed by Wang and Li in \cite{WL2018}
$$
(f,g)_{S_7} = \frac{3}{\pi^4} \int\limits_{\partial B_8(0,1)} (\overline{x \; g(x)}) \; (x \; f(x)) |d\sigma(x)|,
$$
where $S_7 :=\partial B_8(0,1)$ is used as abbreviation for the unit sphere in $\mathbb{O}$.
\par\medskip\par
In case of considering for $\Omega$ the octonionic right half-space $H^{+}(\mathbb{O}) = \{ x \in \mathbb{O} \mid x_0 > 0\}$ the outward directed unit normal field simplifies to the constant function $n(x) = - 1$ and our general definition for the inner product reduces to
$$
(f,g)_{\partial H^+}  = \frac{3}{\pi^4} \int\limits_{\partial H^+} \overline{g(x)} f(x) |d\sigma(x)|
             = \frac{3}{\pi^4} \int\limits_{\mathbb{R}^7} \overline{g(x)} f(x) dx_1 dx_2 \cdots dx_7,
$$
like in the associative setting. The idea of \cite{WL2020} to use the usual inner product in the context of $H^+(\mathbb{O})$ thus fits very well in this framework.
\end{remark}
Once more, we wish to emphasize that in contrast to the Clifford algebra setting, here $(\cdot,\cdot)$ is only $\mathbb{R}$-linear but not $\mathbb{O}$-linear. We only have the $\mathbb{O}$-para-linearity as defined above. In view of the lack of $\mathbb{O}$-linearity we do neither have a Cauchy-Schwarz inequality nor a direct analogue of the Fischer-Riesz representation theorem when using this inner product. So, the existence of a reproducing kernel is a non-trivial issue.  Before we explain how we can overcome this problem let us briefly give for the sake of being self-contained a concise summary on the results that were obtained in the recent period 2018 -- 2021 in which explicit formulas for the reproducing Szeg\"o kernel have been established in the particular contexts of the domains $B_8(0,1)$, $H^{+}(\mathbb{O})$ and for strip domains $0 < x_0 < d$ bounded in the direction of the real axis, cf.\cite{WL2018,KraMMAS,ConKra2021}. In these papers the existence of the kernel has basically been shown by presenting a fully explicit kernel function that then in fact turned out to have the property of reproducing all functions $f \in (H^2,(\cdot,\cdot))$. The proof of the reproduction property was achieved by a  direct application of the octonionic Cauchy integral formula. To this end, the  presence of $n(y)$ in the definition of the inner product was really essential, because otherwise one could not draw the conclusion of the reproduction property by directly using the specific Cauchy formula from \cite{XL2002}.
\par\medskip\par
In \cite{WL2018} the authors proved that
\begin{proposition}
For the unit octonionic ball $B_8(0,1)$ the octonionic Szeg\"o projection is given by
$$
[{\cal{S}} f](y) := (f,S(\cdot,y))_{S_7}
$$
where
$$
S(x,y): =S_{S_7}(x,y) = \frac{1-\overline{x}y}{|1-\overline{x}y|^8}.
$$
\end{proposition}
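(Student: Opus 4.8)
The plan is to reduce the reproducing identity $(f,S(\cdot,y))_{S_7}=f(y)$, valid for every $f$ in the octonionic Hardy space $H^2(B_8(0,1),\mathbb{O})$ and every $y\in B_8(0,1)$, to the octonionic Cauchy integral formula of Proposition~\ref{cauchy1}. Writing the inner product out explicitly,
$$
(f,S(\cdot,y))_{S_7}=\frac{3}{\pi^4}\int_{S_7}\bigl(\overline{x\,S(x,y)}\bigr)\bigl(x\,f(x)\bigr)\,|d\sigma(x)|,
$$
the whole argument hinges on the pointwise kernel identity
$$
\overline{x\,S(x,y)}=q_{\bf 0}(x-y)=\frac{\overline x-\overline y}{|x-y|^8},\qquad x\in S_7,\ y\in B_8(0,1).
$$
Once this is established, the integrand above becomes exactly the one appearing in Cauchy's formula, after one recalls that the outward unit normal on $S_7$ is $n(x)=x$.

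The first task is therefore to prove this pointwise identity. Since $|1-\overline x y|^8$ is a positive real scalar it may be pulled out of the octonionic product, so that $x\,S(x,y)=|1-\overline x y|^{-8}\,x(1-\overline x y)=|1-\overline x y|^{-8}\bigl(x-x(\overline x y)\bigr)$. Writing $\overline x=2\,{\rm Re}(x)-x$ and using bilinearity of the octonionic product over $\R$ together with the left alternative law $x(xy)=(xx)y$, one obtains $x(\overline x y)=(x\overline x)y=|x|^2y$, which equals $y$ on the unit sphere; hence $x\,S(x,y)=\dfrac{x-y}{|1-\overline x y|^8}$ and, taking conjugates, $\overline{x\,S(x,y)}=\dfrac{\overline x-\overline y}{|1-\overline x y|^8}$. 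It then remains to observe that $|1-\overline x y|=|x-y|$ whenever $|x|=1$, which is a short computation from $|a-b|^2=|a|^2-2\langle a,b\rangle+|b|^2$, the identity $\langle 1,\overline x y\rangle=\langle x,y\rangle$, and $|\overline x y|=|x|\,|y|=|y|$. Replacing $|1-\overline x y|^8$ by $|x-y|^8$ in the denominator gives $\overline{x\,S(x,y)}=q_{\bf 0}(x-y)$.

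Next I would feed this into the Cauchy formula. On $S_7$ one has $d\sigma(x)=x\,|d\sigma(x)|$, so Proposition~\ref{cauchy1}, applied on $B_8(0,r)$ with $r\uparrow1$ and using the continuity of $f$ up to the boundary, yields $f(y)=\frac{3}{\pi^4}\int_{S_7}q_{\bf 0}(x-y)\bigl(x\,f(x)\bigr)\,|d\sigma(x)|$. Substituting $\overline{x\,S(x,y)}=q_{\bf 0}(x-y)$ into the expression for $(f,S(\cdot,y))_{S_7}$ reproduces precisely this integral, so $(f,S(\cdot,y))_{S_7}=f(y)$; continuity of both sides in the $L^2(S_7)$-norm (for fixed $y$ the kernel $S(\cdot,y)$ is bounded, hence square-integrable, on $S_7$) extends the identity from the dense subset of functions that are continuous on $\overline{B_8(0,1)}$ to all of $H^2(B_8(0,1),\mathbb{O})$. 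To see that ${\cal S}$ is genuinely the Szeg\"o \emph{projection}, I would finally observe that for an arbitrary $g\in L^2(S_7)$ the same substitution gives $[{\cal S}g](y)=\frac{3}{\pi^4}\int_{S_7}q_{\bf 0}(x-y)\bigl(x\,g(x)\bigr)\,|d\sigma(x)|$, the octonionic Cauchy transform of $g$; this is left octonionic monogenic in $y$ on $B_8(0,1)$ (differentiate under the integral, using that $q_{\bf 0}(x-y)$ is built from the left monogenic fundamental solution of ${\cal D}$), so the range of ${\cal S}$ lies in $H^2(B_8(0,1),\mathbb{O})$, and together with the reproducing property and the fact that ${\rm Re}(\cdot,\cdot)_{S_7}$ turns $H^2(B_8(0,1),\mathbb{O})$ into a real Hilbert space this identifies ${\cal S}$ with the orthogonal projection onto it.

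The step I expect to be the main obstacle is the first one, for a reason peculiar to the non-associative setting: one must keep every parenthesis under control. The identity $x(\overline x y)=(x\overline x)y$ amounts to the vanishing of the associator $[x,\overline x,y]$, which works only because $\overline x\in{\rm span}_{\R}\{1,x\}$ and the octonions are alternative; and, more importantly, the bracketing $\bigl(\overline{x\,S(x,y)}\bigr)\bigl(x\,f(x)\bigr)$ of the inner product must match the bracketing $q_{\bf 0}(x-y)\bigl(x\,f(x)\bigr)$ forced by Proposition~\ref{cauchy1}. These do match, but precisely because the normal field $x$ is grouped with $f(x)$ inside the \emph{same} pair of brackets in both expressions --- which is exactly why $n(x)$ was built into the definition of $(\cdot,\cdot)_{\partial\Omega}$. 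Everything past the first step is then routine.
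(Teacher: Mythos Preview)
Your proposal is correct and matches the approach the paper attributes to the cited source \cite{WL2018}. The paper does not give its own proof of this proposition; it states that the reproduction property ``was achieved by a direct application of the octonionic Cauchy integral formula'' and that ``the presence of $n(y)$ in the definition of the inner product was really essential,'' which is precisely the mechanism you exploit via the pointwise identity $\overline{x\,S(x,y)}=q_{\bf 0}(x-y)$ on $S_7$.
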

Next in \cite{KraMMAS} we were able to prove:
\begin{proposition}
\begin{itemize}
\item[(i)]
For the octonionic right half-space  $H^{+}(\mathbb{O})$ we have $[{\cal{S}} f](y) := (f,S(\cdot,y))_{H^{+}}$ with
$$
S(x,y): =S_{H^+}(x,y) = \frac{\overline{x}+y}{|\overline{x}+y|^8}
$$
for all $f \in H^2(H^+(\mathbb{O}))$.
\item[(ii)]
Let $d > 0$ be a positive real number. The octonionic monogenic Szeg\"o kernel of the strip domain $T := \{ x \in \mathbb{O} \mid 0 < x_0 < d\}$ has the form
$$
S(x,y): =S_{T}(x,y) =  \sum\limits_{n=-\infty}^{+\infty} (-1)^n \frac{\overline{x}+y+2dn}{|\overline{x}+y+2dn|^8}.
$$
\end{itemize}
\end{proposition}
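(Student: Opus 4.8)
The plan is to reduce both formulas to the octonionic Cauchy integral formula (Proposition~\ref{cauchy1}). The simplification special to these two domains is that the outward unit normal field is locally constant on the boundary: $n(x)\equiv -1$ on the hyperplane $\{x_0=0\}$ and $n(x)\equiv +1$ on $\{x_0=d\}$. Consequently the norm-$1$ weight factor in the inner product collapses, since $(\overline{n(x)g(x)})(n(x)f(x))=\overline{g(x)}\,f(x)$ on each boundary component (the two occurrences of the sign cancel), so that $(f,S(\cdot,y))_{\partial\Omega}=\frac{3}{\pi^4}\int_{\partial\Omega}\overline{S(x,y)}\,f(x)\,|d\sigma(x)|$; and for the same reason the bracketing ambiguity in Proposition~\ref{cauchy1} is harmless here, because $d\sigma(x)=n(x)\,|d\sigma(x)|$ is then a real scalar multiple of $|d\sigma(x)|$ and may be pulled through octonionic products freely. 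Thus it will suffice to show that $S(\cdot,y)\in H^2(\Omega)$ and that the integral on the right reproduces $f(y)$ for every $f\in H^2(\Omega)$.

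For the half-space, I would first record that, writing $q_{\bf 0}(z)=\overline z/|z|^8$, one has $S_{H^+}(x,y)=q_{\bf 0}(x+\overline y)$, since $\overline{x+\overline y}=\overline x+y$ and $|x+\overline y|=|\overline x+y|$. Because conjugation maps $H^{+}(\mathbb{O})$ onto itself, the singularity $x=-\overline y$ of $x\mapsto q_{\bf 0}(x+\overline y)$ has real part $-y_0<0$, hence lies outside $\overline{H^{+}(\mathbb{O})}$; as the Cauchy kernel $q_{\bf 0}$ is left octonionic monogenic away from the origin and monogenicity is translation invariant, $S_{H^+}(\cdot,y)$ is left octonionic monogenic on a neighbourhood of $\overline{H^{+}(\mathbb{O})}$, and its decay $|S_{H^+}(x,y)|=O(|x|^{-7})$ puts it in $L^2(\partial H^+)$, so $S_{H^+}(\cdot,y)\in H^2(H^+)$. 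For the reproduction I would apply Proposition~\ref{cauchy1} to $f$ on the truncated half-ball $G_R:=H^{+}(\mathbb{O})\cap B_8(0,R)$, whose boundary is the flat disc $\{x_0=0,\ |x|<R\}$ (with $n=-1$) together with a spherical cap. On the flat disc $x$ is purely imaginary, so $\overline x=-x$, and a one-line computation with conjugates and moduli gives $q_{\bf 0}(x-y)=-\overline{S_{H^+}(x,y)}$ there; combined with $d\sigma(x)=-|d\sigma(x)|$ this turns the flat part of the Cauchy integral into exactly $\frac{3}{\pi^4}\int_{\{x_0=0,\,|x|<R\}}\overline{S_{H^+}(x,y)}\,f(x)\,|d\sigma(x)|$. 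Letting $R\to\infty$ and discarding the cap (see the last paragraph) yields $f(y)=(f,S_{H^+}(\cdot,y))_{\partial H^+}$, which is assertion~(i).

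For the strip I would regard $S_T(\cdot,y)=\sum_{n\in\mathbb Z}(-1)^n q_{\bf 0}(\cdot+\overline y+2dn)$ as the method-of-images kernel obtained by reflecting the half-space kernel across the two bounding hyperplanes $\{x_0=0\}$ and $\{x_0=d\}$ with alternating signs. The series and its first derivatives converge locally uniformly, since $|q_{\bf 0}(x+\overline y+2dn)|=O(|n|^{-7})$ uniformly on compacta, and every singularity $x=-\overline y-2dn$ has real part $-y_0-2dn\notin[0,d]$ because $0<y_0<d$; hence $S_T(\cdot,y)$ is left octonionic monogenic on a neighbourhood of $\overline T$ and lies in $L^2(\partial T)$, so $S_T(\cdot,y)\in H^2(T)$. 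For the reproduction I would apply Proposition~\ref{cauchy1} to $f$ on the truncated strip $T_R:=T\cap B_8(0,R)$, the lateral part of $\partial T_R$ contributing $o(1)$ as $R\to\infty$. Using $\overline x=-x$ on $\{x_0=0\}$ and $\overline x=2d-x$ on $\{x_0=d\}$, together with the reindexings $n\mapsto -n$ and $n\mapsto -n-1$ of the series, one checks the two boundary identities
$$
\overline{S_T(x,y)}\big|_{\{x_0=0\}}=-\Sigma(x,y),\qquad
\overline{S_T(x,y)}\big|_{\{x_0=d\}}=+\Sigma(x,y),\qquad
\Sigma(x,y):=\sum_{n\in\mathbb Z}(-1)^n q_{\bf 0}(x-y+2dn),
$$
and it is precisely here that the alternating signs in the definition of $S_T$ are forced. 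Feeding this into the Cauchy integral over $\partial T$, the normals $-1$ and $+1$ combining with the signs of $\mp\Sigma$ to reconstitute $d\sigma(x)$ uniformly, one obtains
$$
(f,S_T(\cdot,y))_{\partial T}=\frac{3}{\pi^4}\int_{\partial T}q_{\bf 0}(x-y)\,(d\sigma(x)f(x))+\frac{3}{\pi^4}\sum_{n\neq 0}(-1)^n\int_{\partial T}q_{\bf 0}(x-(y-2dn))\,(d\sigma(x)f(x)).
$$
The first integral equals $f(y)$ by Proposition~\ref{cauchy1}; each summand of the second vanishes by Cauchy's theorem, since $y-2dn$ lies outside $\overline T$ (again in truncated form, the lateral piece tending to $0$), and absolute convergence of the series justifies interchanging it with the boundary integral. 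This is assertion~(ii).

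The step I expect to be the main obstacle is the passage to the limit on the unbounded boundary: controlling the spherical-cap contribution for $H^+$ and the lateral contribution for $T$, and justifying the term-by-term exchange of the infinite sum with the boundary integral. My plan for this is the classical one: establish both identities first for $f$ in a dense subclass of $H^2(\Omega)$ with enough decay (for instance functions whose boundary traces have compact support), where the cap and lateral integrals manifestly vanish, and then extend to all $f\in H^2(\Omega)$ by continuity. The continuity step uses that $S(\cdot,y)\in L^2(\partial\Omega)$, so $f\mapsto(f,S(\cdot,y))_{\partial\Omega}$ is $\mathbb R$-continuous, and that the point evaluation $f\mapsto f(y)$ is a bounded $\mathbb R$-linear functional on $H^2(\Omega)$; since no Cauchy--Schwarz inequality is available here, this last fact must be obtained from interior Cauchy estimates applied to the real components rather than from the reproducing kernel itself.
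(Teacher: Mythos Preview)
Your proposal is correct and follows the same route that the paper indicates: the paper does not give a proof in the text but cites \cite{KraMMAS} and \cite{CoKraSa}, describing the method as ``proposing a particular function and showing then that this function exactly reproduces every element of $H^2$'' via the octonionic Cauchy integral formula, which is exactly what you carry out. Your observation that the constant normal field $n(x)\equiv\pm 1$ on each boundary component both collapses the weight in the inner product and renders $d\sigma(x)$ real (so that the bracketing in Proposition~\ref{cauchy1} becomes immaterial) is the key simplification, and your reflection/periodisation computation for the strip, together with the density-plus-continuity argument to handle the unbounded boundary, is the standard way to make this rigorous.
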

In these two papers the existence of the Szeg\"o kernel has been proved directly by simply proposing a particular function and by showing then that this function exactly reproduces every element of $H^2$. We also note that these kernel functions are left octonionic monogenic  in the first variable and anti-monogenic in the second one. Very detailed proofs have also been given in the overview paper \cite{CoKraSa}.
\par\medskip\par
Let us now turn to the context of general domains. Here we are in need of theoretical arguments. Actually in \cite{ConKra2021} we managed to show that $H^2(\Omega, \mathbb{O})$ equipped with $(\cdot,\cdot)$ satisfies a Bergman condition. This could be achieved by only relying on the Cauchy integral formula.

However, due to the absence of a Fischer-Riesz representation theorem we cannot draw any conclusion on the existence of a reproducing kernel function just by means of the Bergman condition in the form as it is proved in \cite{ConKra2021}.
\par\medskip\par
As in \cite{ConKra2021}, one possibility to overcome this obstacle is to consider first the real-valued inner product defined by
$$
\langle f,g \rangle_0 := {\rm Re}\{(f,g)\}.
$$
Using, instead of the global octonion-valued inner product, the real-valued inner products on each real component function of an octonionic function allows us to apply the well-known statements on classical reproducing kernel Hilbert spaces.
\par\medskip\par
\begin{remark}
The property $\langle ab,c \rangle = \langle b, \overline{a}c\rangle$ allows us to meaningfully define for any octonionic $\mathbb{R}$-linear functional ${\cal{T}}$ a uniquely defined adjoint ${\cal{T}}^*$ such that $  \langle {\cal{T}} f,g \rangle_0 =  \langle f,{\cal{T}}^*g\rangle_0$. Alternatively, we can also argue here with the concept of $\mathbb{O}$-para-linearity as proposed in \cite{QR2022}.
\end{remark}
{\bf Fact}: If we endow $H^2(\Omega,\mathbb{O})$ with $ \langle \cdot,\cdot \rangle_0$, then the classical Fischer-Riesz representation theorem implies that there is always a uniquely defined reproducing kernel ${{S_0}}_x: y \mapsto {{S_0}}_x(y):={S_0}(x,y)$ in $H^2(\Omega)$ which we call the octonionic Szeg\"o kernel with respect to the real-valued inner product  $\langle \cdot , \cdot \rangle_0$.
\par\medskip\par
Note that it reproduces the harmonic real part $f_0$ of an octonionic monogenic function $f$ viz
$$
[{\cal{S}}_0 f](x) :=  \langle f,{{S_0}}_x \rangle_0 = f_0(x) \quad \quad \forall f \in H^2(\Omega, \mathbb{O}).
$$
Let us now consider the other real components of $f(x) = f_0(x) + f_1(x) e_1 + \cdots + f_7(x) e_7$ belonging to $H^2(\Omega)$. To leave it simple let us denote by $f_i$ or by $\{f\}_i$ the $i$-th real component of $f$.

If $f \in H^2(\Omega)$, then all the components $f_i$ are harmonic functions from $\Omega\subset\mathbb{O}\cong\mathbb R^8$ to $\mathbb{R}$.
Next we may define for all $i = 1,\ldots,7$ the real-valued inner products
$$
\langle f,g \rangle_i := \{(f,g)\}_i = \frac{3}{\pi^4}\int\limits_{\partial \Omega} \{(\overline{n(y)\;g(y)})\;(n(y)\;f(y))   \}_i |d\sigma(y)|,
$$
For each $i=1,\ldots,7$ we get a unique reproducing kernel ${{S_i}}_x: y \mapsto {{S_i}}_x(y):={{S_i}}(x,y)$ in $H^2(\Omega)$ reproducing the harmonic component $f_i$ of the octonionic monogenic $f \in H^2(\Omega)$, now with respect to the inner product $ \langle \cdot,\cdot \rangle_i$, i.e.:
$$
[{\cal{S}}_i f](x) :=  \langle f,{{S_i}}_x \rangle_i = f_i(x) \quad \quad \forall f \in H^2(\Omega, \mathbb{O}).
$$
Next a total octonionic monogenic Szeg\"o projection can be defined by
\begin{equation}\label{monszego}
[{\cal{S}} f] := \sum\limits_{i=0}^7 \langle f , S_i \rangle_i e_i.
\end{equation}
By construction this projection satisfies $[{\cal{S}} f](x) = \sum\limits_{i=0}^7 f_i(x) e_i = f(x)$ for all $f\in H^2(\Omega)$ and it is $\mathbb{R}$-linear in the classical sense, cf. \cite{ConKra2021}. The projection ${\cal{S}}_0 f$ constitutes its real part.
\par\medskip\par
{\bf Open question}: Investigate whether it is possible to represent the total Szeg\"o projection
${\cal{S}}$ in terms of a single octonionic integral transform of the form
\begin{equation}\label{globalform}
[{\cal{S}}f](x) := \frac{3}{\pi^4} \int\limits_{\partial \Omega}(\overline{n(y) S(x,y)})\; (n(y)f(y)) |d\sigma(y)|
\end{equation}
with a function $S(x,y)$ that is octonionic monogenic in $x$ and octonionic anti-monogenic in $y$.
\par\medskip\par
Positive answers have in fact already been given for the special cases where either $\Omega = B_8(0,1)$, or $\Omega = H^+(\Omega)$, or where $\Omega$ is a strip domain of the above described form by proving the reproduction property directly. But even in the general cases we can always reproduce component-wisely using the real-valued inner products.

\par\medskip\par
{\bf Problem}: Due to the absence of a direct analogue of the Fischer-Riesz representation theorem we cannot directly conclude that one can always express the projection ${\cal{S}}$ in terms of one single octonionic integral kernel $S(x,y)$ being octonionic monogenic in $x$ and octonionic anti-monogenic in $y$ of one single octonionic integral equation.
\par\medskip\par
{\bf Fact A}: The consideration of the real-valued inner products $\langle \cdot, \cdot \rangle_i$ links the Szeg\"o projection with the notion of orthogonality and self-adjointness with respect to the real-valued inner products $\langle \cdot, \cdot \rangle_i$.


Using the real-valued inner product  $\langle f , g \rangle_0$ we have the self-adjointness property of an $\mathbb{R}$-linear octonionic integral operator ${\cal{T}}$ if  $\langle {\cal{T}} f , g \rangle_0 = \langle f , {\cal{T}} g \rangle_0$.
\par\medskip\par

{\bf Fact B}: Assume that ${\cal{T}}f$ is representable with a single octonionic kernel function $k(x,y)$ being octonionic monogenic in $x$ and octonionic anti-monogenic in $y$. This operator is self-adjoint with respect to $\langle \cdot, \cdot \rangle_0$  if the associated kernel function of ${\cal{T}}^*$ is exactly the octonionic conjugate of the kernel function of ${\cal{T}}$ in view of compatibility condition $\langle u v,w\rangle = \langle v , \overline{u}w\rangle$.

\par\medskip\par
It has not yet been proven that the total Szeg\"o projection is always an orthogonal projection and self-adjoint with respect to $\langle\cdot,\cdot\rangle_0$. However, in all the cases where we have an explicit representation formula we can directly establish the following:
\begin{proposition}
In the explicit cases of $B_8(0,1)$, $H^+(\mathbb{O})$ and the strip domains $T = \{z \in \mathbb{O} \mid 0 < x_0 < d\}$ the total Szeg\"o projection is self-adjoint and hence an orthogonal projector with respect to the real-valued inner product $\langle\cdot, \cdot\rangle_0$.
\end{proposition}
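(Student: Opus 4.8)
In each of the three cases $\mathcal{S}$ is the integral operator of \eqref{globalform},
\[
[\mathcal{S}f](x)=\frac{3}{\pi^4}\int_{\partial\Omega}\big(\overline{n(y)S(x,y)}\big)\,\big(n(y)f(y)\big)\,|d\sigma(y)|,
\]
with $S$ the explicit kernel $S_{S_7}$, $S_{H^+}$ or $S_T$. From \cite{WL2018,KraMMAS,ConKra2021,CoKraSa}, where the reproduction property was deduced from the octonionic Cauchy formula of Proposition~\ref{cauchy1}, we already know that $\mathcal{S}$ maps $L^2(\partial\Omega)$ into the closed subspace $H^2(\Omega,\mathbb{O})$ and restricts to the identity on it. Hence $\mathcal{S}^2=\mathcal{S}$, so $\mathcal{S}$ is a bounded $\mathbb{R}$-linear projection of the real Hilbert space $\big(L^2(\partial\Omega),\langle\cdot,\cdot\rangle_0\big)$ onto $H^2(\Omega,\mathbb{O})$. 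Since a projection onto a closed subspace of a Hilbert space is the orthogonal one precisely when it is self-adjoint, it only remains to prove
\[
\langle \mathcal{S}f,g\rangle_0=\langle f,\mathcal{S}g\rangle_0\qquad\text{for all }f,g\in L^2(\partial\Omega),
\]
equivalently, by the Hermitian property (ii) of $(\cdot,\cdot)_{\partial\Omega}$ and by the symmetry of $\langle\cdot,\cdot\rangle_0$, that ${\rm Re}\,(\mathcal{S}f,g)_{\partial\Omega}={\rm Re}\,(\mathcal{S}g,f)_{\partial\Omega}$.

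The key property of the explicit kernels for this is the \emph{Hermitian symmetry} $S(y,x)=\overline{S(x,y)}$, which one reads off each closed form. For $B_8(0,1)$,
\[
\overline{S_{S_7}(x,y)}=\frac{\overline{1-\overline{x}y}}{\,|1-\overline{x}y|^8}=\frac{1-\overline{y}x}{\,|1-\overline{y}x|^8}=S_{S_7}(y,x),
\]
using $\overline{1-\overline{x}y}=1-\overline{y}x$ and $|w|=|\overline{w}|$; the half-space and strip kernels behave identically termwise, since $\overline{\overline{x}+y+2dn}=x+\overline{y}+2dn$ and $|\overline{x}+y+2dn|=|x+\overline{y}+2dn|$. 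In particular $S(\cdot,y)\in H^2(\Omega,\mathbb{O})$ for each fixed $y$, while $S(x,\cdot)=\overline{S(\cdot,x)}$ is anti-monogenic.

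To establish the self-adjointness identity I would first invoke the computation from the proof of Proposition~\ref{criteriaHardy}(vi) — that the unit normals disappear inside a real part, i.e.\ ${\rm Re}\big[(\overline{n g})(n f)\big]={\rm Re}(\overline{g}\,f)$ whenever $|n|=1$ — to rewrite $\langle\mathcal{S}f,g\rangle_0=\tfrac{3}{\pi^4}\int_{\partial\Omega}{\rm Re}\big[\overline{g(x)}\,[\mathcal{S}f](x)\big]\,|d\sigma(x)|$, and then substitute the integral formula for $[\mathcal{S}f](x)$, producing a double integral over $\partial\Omega\times\partial\Omega$ with integrand ${\rm Re}\big[\overline{g(x)}\big((\overline{n(y)S(x,y)})(n(y)f(y))\big)\big]$. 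The plan is to simplify this integrand: one transports the factors $n(y)$ and $\overline{n(y)}$ past the remaining octonions and cancels them (using $|n(y)|=1$ together with alternativity, the Moufang identities and Artin's theorem applied to the two-generated subalgebras $\langle n(y),S(x,y)\rangle$ and $\langle n(y),f(y)\rangle$), which brings the integrand to ${\rm Re}\big(\overline{g(x)}\,\overline{S(x,y)}\,f(y)\big)$ — a real part of a three-fold product, hence bracket-independent by \eqref{1.4e}; then $\overline{S(x,y)}=S(y,x)$ and the conjugation identity ${\rm Re}(abc)={\rm Re}(\overline{c}\,\overline{b}\,\overline{a})$ rewrite it as ${\rm Re}\big(\overline{f(y)}\,S(x,y)\,g(x)\big)$. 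The right-hand side ${\rm Re}\,(\mathcal{S}g,f)_{\partial\Omega}$ is the analogous double integral with $f$ and $g$ interchanged; after Fubini's theorem and a relabelling $x\leftrightarrow y$ its integrand becomes ${\rm Re}\big(\overline{g(x)}\,S(y,x)\,f(y)\big)$, which equals ${\rm Re}\big(\overline{f(y)}\,S(x,y)\,g(x)\big)$ by the same two identities. Hence the two double integrals coincide, which is self-adjointness and therefore the orthogonal-projector property.

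\textbf{Main obstacle.} The only genuinely delicate step is the cancellation of the normal fields inside the brackets above: the real part of a \emph{four}-fold octonionic product is not independent of its bracketing (already ${\rm Re}\big(a[b,c,d]\big)$ need not vanish), so one cannot ``drop the normals'' in one stroke as in the two-factor situation of Proposition~\ref{criteriaHardy}(vi), but must move $n(y)$ through one factor at a time via alternativity, the Moufang laws and Artin's theorem. This is, however, exactly the bracket bookkeeping already carried out in \cite{WL2018,KraMMAS,ConKra2021,CoKraSa} when the reproduction property of these three kernels was derived — it was precisely the presence of $n(y)$ inside the inner product that made $\overline{n(y)S(x,y)}$ collapse, on $\partial\Omega$, onto the octonionic Cauchy kernel $q_{\mathbf 0}$ of Proposition~\ref{cauchy1} — so the required manipulation can be imported verbatim in each of the three cases. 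An alternative that avoids the global octonionic brackets entirely is to prove self-adjointness componentwise, working with the real-valued forms $\langle\cdot,\cdot\rangle_i$ and the component reproducing kernels $S_i(x,y)$ and reassembling via \eqref{monszego}.
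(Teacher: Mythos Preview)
Your strategy --- verify the Hermitian relation $\overline{S(x,y)}=S(y,x)$ for each of the three kernels and deduce self-adjointness from it --- is exactly the paper's. The paper's proof does nothing beyond checking this relation explicitly and invoking the preceding ``Fact B'' (kernel conjugate-symmetry implies self-adjointness with respect to $\langle\cdot,\cdot\rangle_0$); you go further by trying to unfold Fact B as an explicit double-integral identity.

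That extra step is where a gap appears. For $H^+(\mathbb{O})$ and $T$ your reduction is fine, because $n(y)=\pm 1$ is real and the normals cancel trivially. For the unit ball, however, your claim that
\[
{\rm Re}\bigl[\overline{g(x)}\bigl((\overline{n(y)S(x,y)})(n(y)f(y))\bigr)\bigr]
={\rm Re}\bigl(\overline{g(x)}\,\overline{S(x,y)}\,f(y)\bigr)
\]
does not follow from Artin's theorem applied to the subalgebras $\langle n(y),S(x,y)\rangle$ and $\langle n(y),f(y)\rangle$: the four-fold product mixes elements of both subalgebras together with the outside factor $\overline{g(x)}$, so no two-generator associativity is available across the middle bracket. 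Concretely, iterating ${\rm Re}[(ab)c]={\rm Re}[a(bc)]$ reduces your claim to the pointwise identity $\bigl(\overline{g}(\overline{S}\,\overline{n})\bigr)n=\overline{g}\,\overline{S}$, which already fails for $\overline{g}=e_1$, $\overline{S}=e_2$, $n=e_3$ (the left side is $-e_4$, the right side $e_4$). The reproduction-property computations you cite from \cite{WL2018,KraMMAS,ConKra2021,CoKraSa} establish $(f,S_x)_{\partial\Omega}=f(x)$ via the octonionic Cauchy formula for \emph{monogenic} $f$; they do not supply the algebraic cancellation you need here for arbitrary $f,g\in L^2(\partial\Omega)$. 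The paper itself does not resolve this point either --- it black-boxes it as Fact B --- so your verification of $\overline{S(x,y)}=S(y,x)$ matches the paper, but the bracket manipulation you add on top is not justified as written.
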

\begin{proof}
We want to show that in the above mentioned three cases the relation $\langle {\cal{S}}f,g\rangle_0 = \langle f, {\cal{S}}g\rangle_0$ holds.  To do this we can directly verify in each case that  actually $\overline{S(x,y)} = S(y,x)$, use the argument of Fact B. Concretely:
\begin{itemize}
\item
In the case of the octonionic unit ball we have
$$
\overline{S_{S^7}(x,y)}= \overline{\frac{1-\overline{x}y}{|1-\overline{x}y|^8}}= \frac{1-\overline{y}x}{|1-\overline{y}x|^8} = S_{S^7}(y,x).
$$
\item
In the case of the octonionic half-space we have
$$
\overline{S_{H^+}(x,y)} = \overline{\frac{\overline{x}+y}{|\overline{x}+y|^8}} = \frac{\overline{y}+x}{|\overline{y}+x|^8} = S_{H^+}(y,x).
$$
\item
Finally, in the case of the strip domain $T$ we have
$$
\overline{S_{T}(x,y)} = \sum\limits_{n=-\infty}^{+\infty} (-1)^n \overline{\frac{\overline{x}+y+2dn}{|\overline{x}+y+2dn|^8}} = \sum\limits_{n=-\infty}^{+\infty}(-1)^n \frac{\overline{y}+x+2dn}{|\overline{y}+x+2dn|^8} = S_{T}(y,x),
$$
\end{itemize}
and this ends the proof.
\end{proof}

\subsection{Bergman spaces of octonionic monogenic functions}

In the case of the Hardy spaces, we consider the integration over the boundary of a domain. From the geometrical point of view it was therefore absolutely natural to consider the normal unit field $n(x)$ inside the definition of the inner product.

An important question is to ask for analogous considerations when integrating over a volume, as in the case of the Bergman space. Here again we encounter the fundamental problem of how to introduce a meaningful definition of an inner product. In contrast to the Hardy space case, the volume integration normally is not related with any boundary directions, so it would be natural to simply consider
$$
(f,g) = \frac{3}{\pi^4} \int\limits_{\Omega} \overline{g(x)}f(x) dV(x),
$$
where $dV(x) = dx_0 \wedge dx_1 \wedge \ldots \wedge dx_7$ is the scalar volume measure. However, here again we would encounter the problem of how to relate such an integral with the octonionic Cauchy integral formula.
Therefore, in the case of the unit ball, in \cite{WL2018} the authors use the following modified inner product
$$
\frac{3}{\pi^4} \int\limits_{B_8} \overline{\left(\frac{x}{|x|} g(x)\right)}\left(\frac{x}{|x|} f(x)\right) dV(x)
$$
while in their-follow up paper they suggest for the half-space case the usual inner product
$$
\frac{3}{\pi^4} \int\limits_{H^+} \overline{g(x)}f(x) dV(x).
$$
In fact their choices allowed them to use the octonionic Cauchy integral formula to prove that their expressions for the Bergman kernel really reproduce all $L^2$-functions with respect to that particular choice of inner product. In fact, using directly the Cauchy formula they prove in \cite{WL2018} the following result:
\begin{proposition} (Bergman kernel for the unit ball).\\
The Bergman kernel of left octonionic monogenic functions on the octonionic unit ball $B_8(0,1)$ is given by
$$
B_{B_8}(x,y) = \frac{6(1-|x|^2y^2)(1-\overline{x}y)}{|1-\overline{x}y|^{10}} + \frac{2(1-\overline{x}y)(1-\overline{x}y)}{|1-\overline{x}y|^{10}}.
$$
\end{proposition}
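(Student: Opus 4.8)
The plan is to establish the formula by verifying the reproducing identity directly, following the strategy of \cite{WL2018}, in which the octonionic Cauchy integral formula does essentially all the work. First I would reduce matters to showing
\[
\frac{3}{\pi^4}\int_{B_8(0,1)}\overline{\left(\frac{\xi}{|\xi|}\,B_{B_8}(\xi,y)\right)}\left(\frac{\xi}{|\xi|}\,f(\xi)\right)dV(\xi)=f(y)
\]
for every $y\in B_8(0,1)$ and every $f$ that is left octonionic monogenic on a neighbourhood of $\overline{B_8(0,1)}$: such $f$ are $L^2$-dense in the octonionic monogenic Bergman module on $B_8(0,1)$, the kernel $B_{B_8}(\cdot,y)$ itself lies in that module, and the left-hand side depends $L^2$-continuously on $f$, so the general case follows from the smooth case.

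Next I would pass to polar coordinates $\xi=r\omega$, $r\in(0,1)$, $\omega\in S_7$, with $dV(\xi)=r^7\,dr\,dS(\omega)$. The crucial observation — and precisely the reason why a weight factor of norm $1$ must sit exactly where it does — is that $\xi/|\xi|=\omega$ is the outward unit normal $n(\xi)$ to the sphere $\{|\xi|=r\}$ at the point $r\omega$. Hence for each fixed $r$ the inner integral over $S_7$ takes exactly the shape
\[
\frac{3}{\pi^4}\int_{\partial B_8(0,r)}\overline{\bigl(n(\xi)\,B_{B_8}(\xi,y)\bigr)}\,\bigl(n(\xi)\,f(\xi)\bigr)\,|d\sigma(\xi)|,
\]
i.e. the Hardy-type expression on $\partial B_8(0,r)$ with the parentheses grouped precisely as in Proposition~\ref{cauchy1}. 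This is the step where the non-associativity is tamed: with the normal field inside the right bracket one may invoke the scaled version of the octonionic Cauchy integral formula, and the associator correction terms of the modified Stokes formula drop out for the same reason as in the Hardy-space setting.

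I would then rewrite $B_{B_8}(\xi,y)\big|_{|\xi|=r}$ as an explicit combination of rescaled Cauchy kernels $q_{\bf 0}$ of the ball $B_8(0,r)$ and apply the scaled Cauchy formula to each piece, splitting the radius range at $r=|y|$: for $|y|<r$ each piece reproduces $f(y)$ up to an explicit scalar density in $r$, while for $|y|>r$ the point $y$ lies outside $B_8(0,r)$ and the corresponding sphere integral vanishes — again a statement relying on cancellation of the associator terms, which happens because of the specific form of $q_{\bf 0}$ together with the bracketing forced by the weight, exactly as in the derivation of the Hardy kernels in \cite{WL2018,KraMMAS}. Reassembling the pieces leaves a one-dimensional integral $\int_{|y|}^{1}(\cdots)\,dr$ of a rational function of $r$ and of the relevant inner-product combinations; carrying it out (after one integration by parts in $r$) splits it into a bulk term and a boundary term at $r=1$. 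These reproduce $f(y)$ precisely when $B_{B_8}(x,y)$ has the two-summand form
\[
\frac{6\,(1-|x|^2y^2)(1-\overline{x}y)}{|1-\overline{x}y|^{10}}+\frac{2\,(1-\overline{x}y)(1-\overline{x}y)}{|1-\overline{x}y|^{10}},
\]
the coefficients $6$ and $2$ being pinned down by the normalization $\tfrac{3}{\pi^4}$ and the volume of $B_8$; one then checks directly that this function is left octonionic monogenic in $x$ and anti-monogenic in $y$, so that it is a genuine element of the module in its first slot.

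The \textbf{main obstacle} is twofold. The first and essential difficulty is justifying rigorously that the octonionic Cauchy formula may be applied sphere-by-sphere with the parentheses exactly as written, i.e. checking that the associator error terms in the modified Stokes formula genuinely cancel — this is the whole raison d'être of the norm-$1$ weight. The second is the bookkeeping of the radial integral: one must keep track of the non-associative products and of every constant carefully enough to land on the stated two-term closed form, rather than on some equivalent-looking but differently bracketed expression.
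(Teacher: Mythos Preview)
The paper does not give its own proof of this proposition: it is stated as a result quoted from \cite{WL2018}, with the single remark that Wang and Li prove it ``using directly the Cauchy formula.'' Your proposal is exactly that Cauchy-formula strategy --- polar decomposition, recognition of the weight $\xi/|\xi|$ as the outward normal on concentric spheres so that the octonionic Cauchy integral formula applies with the correct bracketing, followed by the radial integration --- so you are reproducing the cited approach rather than offering an alternative to anything in the present paper.
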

Moreover, in \cite{WL2020} they prove:
\begin{proposition} (Bergman kernel for the half-space)\\
The Bergman kernel of the Bergman space of left octonionic monogenic functions on the octonionic half-space $H^+(\mathbb{O}) = \{ z \in \mathbb{O} \mid x_0 > 0\}$ is given by
$$
B_{H^+}(x,y) = - 2 \frac{\partial }{\partial x_0}
\left( \frac{\overline{x}+y}{|\overline{x}+y|^8} \right).
$$
\end{proposition}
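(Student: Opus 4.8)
The plan is to establish that $B_{H^+}(x,y)$ is the reproducing kernel of the Bergman space of left octonionic monogenic functions on $H^+(\mathbb{O})$ equipped with
$$
(f,g)_{H^+}=\frac{3}{\pi^4}\int_{H^+}\overline{g(x)}\,f(x)\,dV(x),
$$
i.e.\ to prove $f(y)=\frac{3}{\pi^4}\int_{H^+}\overline{B_{H^+}(x,y)}\,f(x)\,dV(x)$ for every such $f$ and every $y\in H^+(\mathbb{O})$. Here the weight factor of norm $1$ degenerates, since on $\partial H^+$ the outward normal is the real scalar $n\equiv-1$; hence no bracketing subtleties occur and, by the identity ${\rm Re}((ab)c)={\rm Re}(a(bc))$ already used in the proof of Proposition~\ref{criteriaHardy}, $(\cdot,\cdot)_{H^+}$ satisfies all the axioms of Definition~\ref{defHilbert}. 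At the outset I would record two structural observations: $B_{H^+}(x,y)=-2\,\frac{\partial}{\partial x_0}S_{H^+}(x,y)$ with $S_{H^+}(x,y)=\frac{\overline x+y}{|\overline x+y|^8}$, and $S_{H^+}(\cdot,y)=q_{\bf 0}(\,\cdot-\widetilde y\,)$ is the Cauchy kernel with pole at $\widetilde y:=-\overline y$, which lies in $\{x_0<0\}$. In particular $S_{H^+}(\cdot,y)$ is left octonionic monogenic on $H^+$ (a translate of $q_{\bf 0}$), whence $B_{H^+}$ is left octonionic monogenic in $x$ and octonionic anti-monogenic in $y$, as befits a Bergman kernel.

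The engine of the proof is the Szeg\"o reproducing formula for the half-space recalled above from \cite{KraMMAS}. Since the normal is $-1$, the weights inside $(\cdot,\cdot)_{\partial H^+}$ cancel and it reads
$$
h(y)=\frac{3}{\pi^4}\int_{\mathbb{R}^7}\overline{S_{H^+}(\underline x,y)}\,h(\underline x)\,d\underline x,\qquad h\in H^2(H^+(\mathbb{O})),\ \ y\in H^+(\mathbb{O}),
$$
where $\partial H^+$ is identified with $\mathbb{R}^7$ and $\underline x$ denotes the imaginary part $\Im(x)$. I would foliate $H^+$ by the hyperplanes $\{x_0=t\}$, $t>0$, so that $dV(x)=d\underline x\,dt$, and evaluate the slice integral at fixed $t>0$. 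The explicit formula gives the elementary identity $S_{H^+}(t+\underline x,y)=S_{H^+}(\underline x,y+t)$; applying the Szeg\"o formula to the translate $x\mapsto f(x+t)$ at the point $y+t\in H^+$ then yields
$$
\frac{3}{\pi^4}\int_{\mathbb{R}^7}\overline{S_{H^+}(t+\underline x,y)}\,f(t+\underline x)\,d\underline x=f(y+2t).
$$
Because $B_{H^+}(t+\underline x,y)=-2\,\partial_t\big[S_{H^+}(t+\underline x,y)\big]$, one Leibniz step in $t$ under the integral sign, combined with the same slice identity applied to the still monogenic function $\partial f/\partial x_0$, turns this into
$$
\frac{3}{\pi^4}\int_{\mathbb{R}^7}\overline{B_{H^+}(t+\underline x,y)}\,f(t+\underline x)\,d\underline x=-\,\partial_t\big[f(y+2t)\big].
$$
Integrating over $t\in(0,\infty)$ telescopes: the lower endpoint contributes $f(y)$ by continuity, and the upper endpoint vanishes because an $L^2$-integrable octonionic monogenic function tends to $0$ at infinity (the components are harmonic, $|f|^2$ is subharmonic, and the sub-mean-value inequality forces decay). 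This is precisely the asserted reproducing identity.

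I would not write out the routine steps, namely the Leibniz expansion and the relation $\partial_t f(y+2t)=2(\partial f/\partial x_0)(y+2t)$. The genuine obstacles are analytic, not algebraic --- the non-associativity has already been absorbed into the Szeg\"o formula we are quoting. First, one must know that the slice restrictions $f|_{\{x_0=t\}}$ of a Bergman-space function lie in $L^2(\mathbb{R}^7)$ for almost every $t>0$, so that the Hardy/Szeg\"o formula on $\{x_0>t\}$ genuinely applies; this follows from the monotonicity of $t\mapsto\int_{\mathbb{R}^7}|f(t+\underline x)|^2\,d\underline x$ together with interior estimates for monogenic functions. Second, one must justify differentiation under the integral sign, Fubini on $(0,\infty)\times\mathbb{R}^7$, and the interchange that lets the slice identity be applied to $\partial f/\partial x_0$; these rest on the $O(|\underline x|^{-7})$ decay of the translated Szeg\"o kernels and on $L^2$-bounds for derivatives of monogenic functions on slightly smaller half-spaces. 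Third, one should first argue for a dense subclass of sufficiently regular $f$ and then pass to the limit in the resulting pointwise identity. A closing remark would verify, exactly as in the self-adjointness Proposition for the Szeg\"o kernels, that $\overline{B_{H^+}(x,y)}=B_{H^+}(y,x)$, so that the Bergman projection $f\mapsto\frac{3}{\pi^4}\int_{H^+}\overline{B_{H^+}(x,y)}f(x)\,dV(x)$ is self-adjoint with respect to $\langle\cdot,\cdot\rangle_0$.
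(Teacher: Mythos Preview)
The paper does not give its own proof of this proposition: it is quoted as a result of Wang and Li \cite{WL2020} (``Moreover, in \cite{WL2020} they prove:\ldots''), so there is no internal argument to compare against. Your outline is nevertheless a correct and standard derivation of a half-space Bergman kernel from the corresponding Szeg\"o kernel, and it is in fact the route taken in \cite{WL2020} and in the Clifford-analytic precursor \cite{ConKraCV}: write $B_{H^+}=-2\,\partial_{x_0}S_{H^+}$, foliate by $\{x_0=t\}$, use the translation identity $S_{H^+}(t+\underline x,y)=S_{H^+}(\underline x,y+t)$ to turn each slice integral into a Szeg\"o reproduction, differentiate in $t$, and integrate the resulting telescoping identity $-\partial_t f(y+2t)$ over $(0,\infty)$.

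The only places where care is genuinely needed are exactly the three you flag. First, membership of the slice restrictions in $H^2$ of the translated half-space: the function $t\mapsto\int_{\mathbb{R}^7}|f(t+\underline x)|^2\,d\underline x$ is indeed monotone (a consequence of subharmonicity of $|f|^2$ and the fact that the components are harmonic), so a Bergman-space $f$ has $L^2$ traces on almost every hyperplane and, by the same monotonicity, on \emph{every} hyperplane $\{x_0=t\}$ with $t>0$. Second, the differentiation under the integral is justified by the uniform $O(|\underline x|^{-7})$ decay of $S_{H^+}(t+\underline x,y)$ and its $t$-derivative for $t$ in compact subsets of $(0,\infty)$. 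Third, the vanishing at $t\to\infty$ follows from the sub-mean-value estimate $|f(y+2t)|^2\le C\,t^{-8}\|f\|_{L^2(H^+)}^2$. With these points filled in, your argument is complete; the closing Hermitian check $\overline{B_{H^+}(x,y)}=B_{H^+}(y,x)$ is precisely what the paper verifies later in its self-adjointness proposition.
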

For the sake of completeness we also recall the formula for the strip domain $T$ proved in \cite{KraMMAS}:
\begin{proposition} (Bergman kernel for the strip domain $T$)\\
Let $d > 0$. Then the Bergman kernel of the Bergman space of left octonionic monogenic functions on the octonionic strip domain $T = \{x \in \mathbb{O} \mid 0 < x_0 < d\}$ is given by
$$
B_{T}(x,y) = (- 2) \sum\limits_{n=-\infty}^{+\infty}\frac{\partial }{\partial x_0}
\left( \frac{\overline{x}+y+2dn}{|\overline{x}+y+2dn|^8} \right).
$$
\end{proposition}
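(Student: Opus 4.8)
The plan is to verify directly, in the spirit of the proofs for $B_8(0,1)$ and $H^+(\mathbb{O})$ in \cite{WL2018,WL2020} and for the Szeg\"o kernel of $T$ in \cite{KraMMAS}, that the displayed function reproduces every element of the Bergman space of left octonionic monogenic functions on $T$. First one fixes the inner product: since on the strip the outward unit normal field is the constant $-1$ on $\{x_0=0\}$ and the constant $+1$ on $\{x_0=d\}$, the norm-$1$ weight factor built into the octonionic Bergman inner product reduces on each face to a real unit, which commutes with everything and cancels in $\overline{(wg)}(wf)=\overline{g}f$; so we may work with the plain inner product $(f,g)_T:=\frac{3}{\pi^4}\int_T\overline{g(x)}f(x)\,dV(x)$, exactly as Wang and Li did in the half-space case.

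I would then dispose of the analytic preliminaries. Writing $q_{\bf 0}(z)=\bar z/|z|^8$ for the octonionic Cauchy kernel, one has $\frac{\overline{x}+y+2dn}{|\overline{x}+y+2dn|^8}=q_{\bf 0}(x+\overline{y}+2dn)$, and since $x\mapsto q_{\bf 0}(x+c)$ is left octonionic monogenic for every constant $c$ (the operator ${\cal D}$ has constant coefficients) and $\partial_{x_0}$ commutes with ${\cal D}$, every summand is left octonionic monogenic in $x$. It is convenient to record that $-2\partial_{x_0}=-({\cal D}+\overline{\cal D})$, so that on the monogenic sum $B_T(x,y)=-\overline{\cal D}_x\big(\sum_{n}q_{\bf 0}(x+\overline{y}+2dn)\big)$, the inner series being the \emph{non}-alternating periodization of the half-space Cauchy kernel, distinct from the alternating Szeg\"o kernel $S_T$. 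Because $|\partial_{x_0}q_{\bf 0}(z)|=O(|z|^{-8})$ and $|x+\overline{y}+2dn|\ge 2d|n|-2d$ for $x,y\in T$, the convergent series $\sum_{|n|\ge 2}(2d|n|)^{-8}$ dominates it, giving locally uniform convergence in the strip, square-integrability over $T$ (so that $B_T(\cdot,y)$ really lies in the Bergman space), and, by the companion computation in $y$ together with $\overline{q_{\bf 0}(w)}=w/|w|^8$, anti-monogenicity in the second variable.

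The heart of the matter is the reproduction identity $(f,B_T(\cdot,y))_T=f(y)$ for every $f$ in the Bergman space and every $y\in T$. I would establish it by inserting the kernel into the inner product and exploiting the $-2\partial_{x_0}$ structure: integration by parts in the distinguished $x_0$-variable (which can also be phrased through the modified octonionic Stokes formula of \cite{XLT2008}) turns $\frac{3}{\pi^4}\int_T\overline{B_T(x,y)}f(x)\,dV(x)$ into integrals over the two faces $\{x_0=0\}$ and $\{x_0=d\}$ --- the lateral contribution at $|x|\to\infty$ being killed by the $L^2$-decay --- and these face integrals are precisely of the shape to which the octonionic Cauchy integral formula, Proposition~\ref{cauchy1}, applies; here the position of the normal field inside the bracketing is essential, exactly as stressed for the Hardy case. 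Summing over the periodic images and keeping track of how the two faces contribute with their respective orientations then produces the stated sum (and explains why it is non-alternating, in contrast with $S_T$), and yields $f(y)$. Along the way one must justify exchanging $\sum_n$ with the integral, which the $O(|z|^{-8})$ estimate makes routine, and check that the associator correction terms --- arising both from the modified Stokes formula and from shifting octonionic factors across the integrations --- cancel on monogenic $f$, using the monogenicity of $f$ and of the kernel, the real-part identity \eqref{1.4e}, and, crucially, $\mathbb{O}$-para-linearity in place of genuine $\mathbb{O}$-linearity.

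The step I expect to be the main obstacle is precisely this non-associativity bookkeeping in the reproduction argument: one must carry every pair of parentheses through the Stokes/integration-by-parts manipulations, identify each associator that appears, and show that it either vanishes (being of the type that the modified Stokes formula isolates on ${\cal D}f$-terms) or is absorbed once one passes, via \eqref{1.4e}, to the real-valued inner products $\langle\cdot,\cdot\rangle_i$ on which the classical reproducing-kernel machinery of \cite{ConKra2021} operates. The companion technical point --- interchanging the infinite image sum with the volume integral --- is comparatively mild here, since the extra $x_0$-derivative upgrades the decay of the summands from $|z|^{-7}$ to $|z|^{-8}$, improving on the situation already handled for the Szeg\"o kernel of the strip in \cite{KraMMAS}.
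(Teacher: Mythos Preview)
The paper does not give its own proof of this proposition: it is stated as a result \emph{recalled} from \cite{KraMMAS} (``For the sake of completeness we also recall the formula for the strip domain $T$ proved in \cite{KraMMAS}''), so there is no in-paper argument to compare against. Your outline --- reduce the weighted inner product to the plain one because $\omega(x)\in\{\pm 1\}$ is real on $T$, establish normal convergence of the image sum via the $O(|z|^{-8})$ decay of $\partial_{x_0}q_{\bf 0}$, and then obtain the reproduction property by integrating by parts in $x_0$ so that the volume integral collapses to face integrals to which the octonionic Cauchy formula (Proposition~\ref{cauchy1}) applies --- is precisely the periodization/method-of-images strategy used in \cite{KraMMAS} (and, in the associative setting, in \cite{ConKraCV}).

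One remark: you are over-weighting the associator difficulty. The integration by parts is in the single \emph{real} variable $x_0$, so no octonionic bracketing is disturbed there; and once you land on the boundary faces you are exactly in the configuration $q_{\bf 0}(y-x)\,(d\sigma(y)\,f(y))$ required by Proposition~\ref{cauchy1}, because on each face $n(y)=\pm 1$ is real and can be moved freely. The modified Stokes formula of \cite{XLT2008} and the para-linearity identity \eqref{1.4e} are not actually needed for the reproduction step on $T$ --- they enter in the paper only for the \emph{general-domain} discussion and for showing that $(\cdot,\cdot)_\Omega$ satisfies the Hilbert-space axioms. So your ``main obstacle'' is lighter than you suggest; the genuine work is the convergence/$L^2$-membership bookkeeping and the verification that the periodic images combine without sign (in contrast to the alternating Szeg\"o sum), both of which you have correctly identified.
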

However, in \cite{WL2018,WL2020} there was no hint how to address more general domains and what are the requirements to guarantee the existence of a unique Bergman kernel. Here we encounter the same kinds of problems as in the case of the Hardy spaces discussed in the previous section.
\par\medskip\par
Although the use of a directional entity in the definition of an inner product for the Bergman space might look a bit unnatural at the first glance, it may be justified by the preceding results. Recall that we are generally assuming that $\Omega \subset \mathbb{O}$ is a simply connected orientable domain with strongly Lipschitz boundary $\partial \Omega$, where the outward directed unit normal field denoted by $n(y)$ exists at almost every point $y$ of $\partial \Omega$.

\par\medskip\par

Now let $\omega(x)$ be a weight factor satisfying $|\omega(x)| = 1$ almost everywhere in $\Omega$ and $\omega(x)=0$ at most in a Lebesgue-null set of $\Omega$. Then define
$$
(f,g)_{\Omega,\omega} = \int\limits_{\Omega}(\overline{\omega(x) g(x)}) (\omega(x) f(x)) dV(x)
$$
which will be abbreviated by $(f,g)_{\Omega}$ when it is clear which concrete weight factor is considered.
\begin{definition}
Let $\Omega$ be a convex domain with a smooth oriented boundary $\partial \Omega$. If $x$ is an interior point of $\Omega$ such that there is a uniquely defined point $\tilde{x} \in \partial \Omega$ satisfying
$$
|x-\tilde{x}| < |x-y| \;\;\forall y \in \partial \Omega \backslash\{\tilde{x}\},
$$
then we define the weight factor as $\omega(x):=n(\tilde{x})$, where $n(\tilde{x})$ indicates the exterior normal unit vector at $\tilde{x} \in \partial \Omega$.
For all other interior points $x \in \Omega$ that do not have a uniquely defined nearest point in the boundary we put $\omega(x):=0$.
\end{definition}
{\bf Assumption}. Let us suppose for the rest of this section that $\Omega$ is  a convex domain with a smooth oriented boundary $\partial \Omega$. Moreover we assume that the set $\Omega$ is such that the Lebesgue measure of all points $x \in \Omega$ which do not have a uniquely defined nearest point on the boundary is zero.
\begin{example}
In the concrete example of the unit ball $B_8(0,1)$ we have
$$
\omega(x) = \left\{ \begin{array}{cc} \frac{x}{|x|} & x \neq 0\\ 0 & x=0 \end{array} \right.
$$
Here, in fact $n(\tilde{x}) = \frac{x}{|x|}$ for all $x \neq 0$.
\par\medskip\par
In the case of the right octonionic half-space $H^+(\mathbb{O})$ we have $\omega(x) \equiv 1$ for all $x \in H^{+}(\mathbb{O})$.
\par\medskip\par
In the case of the strip domain $T = \{x \in \mathbb{O} \mid 0 < {\rm Re}(x) < d\}$ we have
$$
\omega(x) = \left\{ \begin{array}{cc} -1 & 0 < {\rm Re}(x) < \frac{d}{2}\\
	0 & {\rm Re}(x) = \frac{d}{2}\\ 1 & \frac{d}{2} < {\rm Re}(x) < d \end{array}
	\right.
$$
Note that the set of $x \in T$ with ${\rm Re}(x) = \frac{d}{2}$ has in fact Lebesgue measure zero.
 \end{example}
The following definition thus makes sense and encompasses all the previously introduced particular definitions for $B_8(0,1), H^+(\mathbb{O})$ and $T$. It also includes the classical case $\omega(x) \equiv 1$. Note that in the case of associativity these weight factors will be cancelled out, since $\overline{\omega(x)} \omega(x) = 1$.

\begin{definition}\label{innerBergman}
Let $\Omega \subset \mathbb{O}$ be a domain meeting the special requirements described above and let $\omega(x)$ be as above.
For octonion-valued functions $f,g \in L^2(\Omega)$ one defines the following $\mathbb{R}$-linear $\mathbb{O}$-valued inner product
$$
(f,g)_{\Omega} := \frac{3}{\pi^4} \int\limits_{\Omega}(\overline{\omega(x)g(x)})\; (\omega(x)f(x)) dV(x)
$$
The space of octonion-valued functions
$$
{\cal{B}}^2(\Omega) := L^2(\Omega) \cap \{f: \Omega \to \mathbb{O} | {\cal{D}}f(x) =  0\;\;\forall x \in \Omega \}
$$
is called the Bergman space of left octonionic monogenic functions.
\end{definition}
This inner product induces the usual $L^2(\Omega)$-norm, i.e.
$$
(f,f)_{\Omega} = \frac{3}{\pi^4} \int\limits_{\Omega}(\overline{\omega(x)f(x)})\; (\omega(x)f(x)) dV(x) = \frac{3}{\pi^4} \int\limits_{\Omega} |f(x)|^2 dV(x),
$$
were we again make use of Artin's theorem. We can say more. Using the same chain of arguments as in the proof of Proposition~\ref{criteriaHardy} just performing the integration over the full domain instead of performing it over the boundary allows us to establish its complete analogue for the Bergman space of octonionic monogenic functions.

\begin{proposition}
The set $({\cal{B}}^2(\Omega),(\cdot,\cdot)_{\Omega})$ is an octonionic Hilbert space in the sense of Definition~\ref{defHilbert}.
\end{proposition}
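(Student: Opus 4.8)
The plan is to verify one after another the six axioms (i)--(vi) of Definition~\ref{defHilbert}, together with the standing requirement that $\big({\cal B}^2(\Omega),\langle\cdot,\cdot\rangle_0\big)$ with $\langle\cdot,\cdot\rangle_0:={\rm Re}\,(\cdot,\cdot)_\Omega$ be a real Hilbert space. The whole verification of (i)--(vi) runs parallel to the proof of Proposition~\ref{criteriaHardy}: one only replaces the surface element $|d\sigma(x)|$ by the volume element $dV(x)$ and the normal field $n(x)$ by the weight factor $\omega(x)$, the property $|\omega(x)|=1$ a.e.\ now playing the role that $|n(x)|=1$ played in the Hardy situation. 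Throughout I would work inside the ambient left $\mathbb{O}$-module $L^2(\Omega,\mathbb{O})$, on which $(\cdot,\cdot)_\Omega$ is given by the very same formula, regarding ${\cal B}^2(\Omega)$ as the real-linear subspace of its octonionic monogenic elements; this is also how the identities in (iv)--(vi), which refer to $f\alpha$, are to be read.

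The algebraic axioms are then dispatched quickly. (i) is the additivity of the integral and (iv) is immediate, a real number commuting and associating with every octonion. For (ii) one applies $\overline{ab}=\overline b\,\overline a$ to the integrand, turning $\overline{(\overline{\omega g})(\omega f)}$ into $(\overline{\omega f})(\omega g)$. For (iii) Artin's theorem is applied to the two octonions $\omega(x)$ and $f(x)$ that generate the integrand, so $(\overline{\omega(x)f(x)})(\omega(x)f(x))=\overline{f(x)}\,(\overline{\omega(x)}\,\omega(x))\,f(x)=|\omega(x)|^2|f(x)|^2=|f(x)|^2$ almost everywhere, whence $(f,f)_\Omega=\tfrac{3}{\pi^4}\|f\|_{L^2(\Omega)}^2$ as already recorded above; definiteness follows since a monogenic function is real-analytic, so $f=0$ a.e.\ forces $f\equiv 0$. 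Axiom (v) is Artin's theorem once more, now with the two generators $\alpha$ and $F(x):=\omega(x)f(x)$, giving $(\overline F)(F\alpha)=(\overline F\,F)\alpha=|f(x)|^2\alpha$ pointwise and hence $(f\alpha,f)_\Omega=(f,f)_\Omega\,\alpha$. Finally (vi) is obtained exactly as in Proposition~\ref{criteriaHardy}: combining the compatibility identity $\langle ab,c\rangle=\langle b,\overline a c\rangle$ with Artin's theorem and $|\omega|=1$ one first rewrites $\langle f,g\rangle_0=\tfrac{3}{\pi^4}\int_\Omega{\rm Re}\big(\overline{g(x)}f(x)\big)\,dV(x)$, and then, using ${\rm Re}((ab)c)={\rm Re}(a(bc))$,
$$
\langle f\alpha,g\rangle_0=\tfrac{3}{\pi^4}\int_\Omega{\rm Re}\big(\overline{g(x)}(f(x)\alpha)\big)\,dV(x)=\tfrac{3}{\pi^4}\int_\Omega{\rm Re}\big((\overline{g(x)}f(x))\alpha\big)\,dV(x),
$$
which is the required $\mathbb{O}$-para-linearity.

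The only step with genuine analytic content — and the one I expect to be the real obstacle — is the completeness of $\big({\cal B}^2(\Omega),\langle\cdot,\cdot\rangle_0\big)$, i.e.\ the fact that ${\cal B}^2(\Omega)$ is a \emph{closed} subspace of $L^2(\Omega,\mathbb{O})$; unlike the Hardy space, which is introduced as a closure, ${\cal B}^2(\Omega)$ is defined by the equation ${\cal D}f=0$ and closedness must be proved. Recall that, as noted earlier, every real component $f_i$ of an octonionic monogenic $f$ is harmonic on $\Omega\subseteq\mathbb{R}^8$ (apply $\overline{\cal D}$ to ${\cal D}f=0$; the factorization $\overline{\cal D}{\cal D}=\Delta$ survives the non-associativity by linearized alternativity). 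Hence $|f|^2=\sum_{i=0}^7 f_i^2$ is subharmonic and satisfies the sub-mean-value inequality over every ball contained in $\Omega$, which yields for each compact $K\subset\Omega$ an estimate $\sup_K|f|\le C_K\|f\|_{L^2(\Omega)}$. Therefore an $L^2(\Omega)$-Cauchy sequence in ${\cal B}^2(\Omega)$ converges locally uniformly, the interior derivative estimates for harmonic functions upgrade this to convergence in $C^\infty_{\rm loc}$, so the limit again satisfies ${\cal D}f=0$ and coincides with the $L^2$-limit. Thus ${\cal B}^2(\Omega)$ is closed, and carrying the $L^2(\Omega)$-norm through $\langle\cdot,\cdot\rangle_0$ it is a real Hilbert space; together with (i)--(vi) this gives the claim.
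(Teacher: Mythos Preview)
Your proof is correct and follows essentially the same approach as the paper, which simply refers back to the proof of Proposition~\ref{criteriaHardy} with the boundary integral replaced by the volume integral and the normal field $n(x)$ by the weight factor $\omega(x)$. You actually go further than the paper by also supplying the completeness argument (closedness of ${\cal B}^2(\Omega)$ in $L^2(\Omega)$ via harmonicity of the components and local uniform estimates), a point the paper's one-line proof does not address explicitly.
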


Note of course, that in order to apply Definition~\ref{innerBergman}, we may only consider domains $\Omega$ where the unit normal field $n(x)$ exists at least almost everywhere on the boundary of $\Omega$. This is a condition that is normally not imposed in the treatment of Bergman spaces.  On the other hand from the background of Stokes' theorem the use of the normal field is not completely unmotivated. Again, in the associative case, the normal field (weight factor) $\omega(x)$ is cancelled out in these representations. As already mentioned, in the case of the unit ball we get exactly $n(x)=\frac{x}{|x|}$ and we re-discover the definition proposed in \cite{WL2018} for the unit ball. Also in the half-space case where we have $n(x)=-1$ we re-obtain the definition used in \cite{WL2020}. Our considerations are compatible with the recent research results presented in this direction.

Again we have to stress the fact that the octonionic valued inner product $(f,g)_{\Omega}$ is only $\mathbb{R}$-linear and not $\mathbb{O}$-linear, as one may easily verify with some simple counterexamples. It is again $\mathbb{O}$-para-linear in the sense of \cite{QR2021,QR2022} such as more explicitly described in the previous subsection addressing the Hardy space case.

Nevertheless, we can still prove that

\begin{proposition}
The Bergman space of left octonionic monogenic functions ${\cal{B}}^2(\Omega)$ satisfies the classical Bergman condition.
\end{proposition}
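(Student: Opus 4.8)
The plan is to establish the \emph{Bergman condition}: for every compact $K\subset\Omega$ there is a constant $C_K>0$ such that $\sup_{x\in K}|f(x)|\le C_K\,\|f\|_{L^2(\Omega)}$ for all $f\in{\cal B}^2(\Omega)$; equivalently, that point evaluations are continuous $\mathbb{R}$-linear functionals on ${\cal B}^2(\Omega)$ and that ${\cal B}^2(\Omega)$ is a closed subspace of $L^2(\Omega)$. The first remark, already noted above, is that by Artin's theorem the octonionic inner product of Definition~\ref{innerBergman} induces nothing but the ordinary real $L^2$-norm, $(f,f)_\Omega=\frac{3}{\pi^4}\|f\|_{L^2(\Omega)}^2$, since $\overline{\omega(x)}\,\omega(x)=1$ a.e.\ and the integrand is generated by the two octonions $\omega(x)$ and $f(x)$. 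Hence the entire estimate can be carried out with the real $L^2$-norm and no non-associativity enters.

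For the pointwise bound, fix $x\in\Omega$ and set $\rho:=\mathrm{dist}(x,\partial\Omega)>0$. For each $0<r<\rho$ we have $\overline{B_8(x,r)}\subset\Omega$, so Cauchy's integral formula (Proposition~\ref{cauchy1}) gives $f(x)=\frac{3}{\pi^4}\int_{\partial B_8(x,r)}q_{\bf 0}(y-x)\,(d\sigma(y)f(y))$; since $|q_{\bf 0}(y-x)|=|y-x|^{-7}=r^{-7}$ on that sphere and $|\cdot|$ is multiplicative, this yields $r^{7}|f(x)|\le\frac{3}{\pi^4}\int_{\partial B_8(x,r)}|f(y)|\,|d\sigma(y)|$. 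Integrating in $r$ over $(0,\rho)$ and using the coarea formula turns the right-hand side into a solid integral, so $|f(x)|\le\frac{24}{\pi^4\rho^{8}}\int_{B_8(x,\rho)}|f(y)|\,dV(y)$, and a Cauchy--Schwarz estimate in the \emph{real} space $L^2(B_8(x,\rho))$ together with $|B_8(x,\rho)|=\frac{\pi^4}{24}\rho^8$ gives $|f(x)|\le\bigl(\tfrac{24}{\pi^4}\bigr)^{1/2}\rho^{-4}\,\|f\|_{L^2(\Omega)}$. (Alternatively one reaches the same inequality by noting that each real component $f_i$ is harmonic, because ${\cal D}\,\overline{{\cal D}}$ is the Laplacian on $\mathbb{R}^8$, and applying the solid mean value property componentwise.) For a compact $K\subset\Omega$ one has $\rho\ge\delta:=\mathrm{dist}(K,\partial\Omega)>0$, whence $\sup_{x\in K}|f(x)|\le(\tfrac{24}{\pi^4})^{1/2}\delta^{-4}\|f\|_{L^2(\Omega)}$, which is the Bergman condition.

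Closedness of ${\cal B}^2(\Omega)$ in $L^2(\Omega)$ then follows in the usual way: an $L^2(\Omega)$-Cauchy sequence $(f_n)\subset{\cal B}^2(\Omega)$ converges in $L^2(\Omega)$ to some $f$, and by the estimate above $(f_n)$ is locally uniformly Cauchy, hence converges locally uniformly to a continuous representative of $f$. Since each component $f_{n,i}$ is harmonic, the locally uniform limit has harmonic components, and by the interior derivative estimates for harmonic functions $\partial_{x_j}f_{n,i}\to\partial_{x_j}f_i$ locally uniformly; therefore the first-order system ${\cal D}f_n=0$ passes to the limit, ${\cal D}f=0$, and $f\in{\cal B}^2(\Omega)$.

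The only point requiring care — rather than a genuine obstacle — is to prevent the parenthesization/non-associativity of the octonionic Cauchy kernel from interfering when one passes to the limit under the integral sign; this is why in the final step I would deduce monogenicity of the limit from the harmonicity of its components together with standard elliptic interior estimates, which avoids octonionic arithmetic entirely. Beyond that the argument is the classical real-variable one applied componentwise, and its ease is precisely the payoff of having inserted a weight factor of norm $1$: it cancels in the norm, so the Bergman condition costs no more here than in the associative case.
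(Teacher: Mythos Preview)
Your proof is correct and follows the same skeleton as the paper's: reduce to the real $L^2$-norm via Artin's theorem, obtain a pointwise bound from the Cauchy/mean value formula, and apply the real Cauchy--Schwarz inequality. The differences are minor but worth noting. The paper simply quotes the solid mean value theorem for octonionic monogenic functions from \cite{XL2002} and then splits $f$ into its eight real components before applying Cauchy--Schwarz to each; you instead derive the mean value inequality yourself by integrating the surface Cauchy formula radially (which is exactly how that theorem is proved), and you apply Cauchy--Schwarz directly to the scalar $|f|$ rather than componentwise --- both routes give the identical constant $(24/\pi^4)^{1/2}\rho^{-4}$. You also go further than the paper by proving closedness of ${\cal B}^2(\Omega)$ in $L^2(\Omega)$, which the paper does not address in this proposition (it stops at continuous point evaluation); your argument via harmonicity of components and interior elliptic estimates is the standard and correct way to do this, and it cleanly sidesteps any non-associativity issues, as you note.
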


\begin{proof}
Let us consider a closed ball $B_8(x,r)$ centered at $x$ with radius $r$ such that it is completely contained in the inside of the domain $\Omega$ that we consider. In \cite{XL2002} an octonionic mean value theorem is proved for octonionic monogenic functions, stating that we have the following representation
$$
f(x) = \frac{1}{r^8V_8} \int\limits_{B_8(x,r)} f(y) dV(y),
$$
where $V_8 = \frac{\pi^4}{24}$ is the volume of the eight-dimensional unit ball. In fact, this mean value theorem is a direct consequence of the octonionic Cauchy integral formula, which again serves as main argument here. Rewriting the octonionic function $f$ in terms of its eight real component functions, say $f(y) = \sum\limits_{i=0}^7 e_i f_i(y)$ where each $f_i(y)$ is real-valued. Then the mean value property can be expressed in the form
$$
f(x) = \frac{1}{r^8 V_8} \sum\limits_{i=0}^7 e_i \int\limits_{B(x,r)} f_i(y) dV(y).
$$
On the level of the real component functions we may now apply the usual real Cauchy-Schwarz inequality which yields
\begin{eqnarray*}
|f(x)|^2 &=& \frac{1}{r^{16} V_8^2} \sum\limits_{i=0}^7 (\int\limits_{B(x,r)} f_i(y) dV(y))^2 \\
&\le & \frac{1}{r^8 V_8} \sum\limits_{i=0}^7 \int\limits_{B(x,r)}(f_i(y))^2 dV(y)\\
&=& \frac{1}{r^8V_8} \int\limits_{B(x,r)} \overline{(n(y)f(y))}(n(y)f(y))dV(y)\\
&=& \frac{1}{r^8V_8} \frac{\pi^4}{3} \|f\|^2_{B(x,r)} \\
&\le& \frac{1}{r^8 V_8} \frac{\pi^4}{3} \|f\|^2_{\Omega} \\
&=& \frac{8}{r^8}\|f\|^2_{\Omega},
\end{eqnarray*}
where $\|f\|^2_{\Omega} = (f,f)_{\Omega} = \frac{3}{\pi^4} \int\limits_{\Omega}(\overline{\omega(y) f(y)})(\omega(y)f(y))dV(y)$. We just have proven that ${\cal{B}}^2(\Omega)$ has a continuous point evaluation.
\end{proof}
However, in view of the lack of the $\mathbb{O}$-linearity, of a Cauchy-Schwarz inequality and of the Fischer-Riesz representation theorem, to get the existence of a uniquely defined Bergman kernel we have again to consider analogously to the previous subsection the real-valued inner products. Concretely,
$$
\langle f, g \rangle _0 := {\rm Re}\{(f,g)_{\Omega} \}.
$$
Also in this context we have $\langle \omega f, \omega g\rangle_0 = \langle f,f\rangle_0$ and therefore $$\langle f,g\rangle_0 = \frac{3}{\pi^4}\int\limits_{\Omega} {\rm Re} \{\overline{g(x)} f(x)\} dV(x).$$ Note that the $\mathbb{O}$-para-linearity condition $\langle f \alpha,g\rangle_0 = {\rm Re}((f,g)\alpha)$ (Definition \ref{defHilbert} (vi)) is satisfied.

For the other $i=1,\ldots,7$ we also define $\langle f,g\rangle_i := \{(f,g)_{\Omega}\}_i$ denoting the $i$-th real component of $(f,g)_{\Omega}$.

Then again we can reproduce each of the eight scalar components separately and define a partial Bergman projection for each component by
${B_i}_x: y \mapsto  {B_i}_x(y) = {B_i}(x,y)$. Then we have $[{\cal{B}}_i f](x) := \langle f, \rangle {B_i}_x\rangle_i = f_i(x)$ for each $i=0,\ldots,7$.
Again then we can define a total Bergman projection by
$$
[{\cal{B}} f] := \sum\limits_{i=0}^7 \langle f , B_i \rangle_i e_i.
$$
If $f$ is a function from $B^2(\Omega)$, then we then have per construction:
$$
[{\cal{B}} f](x) = \sum\limits_{i=0}^7 f_i(x) e_i = f(x),
$$
which is the reproduction property.

Again, the deal then is to look for a global kernel function $B(x,y)$ that allows us to write this total Bergman projection in the form
$$
[{\cal{B}} f](x) = \frac{3}{\pi^4}\int\limits_{\Omega} (\overline{\omega(y) B(x,y)}) (\omega(y) f(y)) dV(y)
$$
involving one global octonionic function function $B(x,y)$.

In the particular cases of the unit ball, the half-space and the strip domain $T$ we are sure to have such global representations.

Finally we can also show that
\begin{proposition}
In the case of the octonionic unit ball $B_8(0,1)$, the octonionic half-space $H^+(\mathbb{O})$ and the strip domain $T$ the total Bergman projection is self-adjoint and hence an orthogonal projector with respect to $\langle\cdot, \cdot\rangle_0$.
\end{proposition}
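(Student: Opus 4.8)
The plan is to follow verbatim the strategy used for the Szeg\"o projection: invoke \textbf{Fact B}. In all three cases the total Bergman projection is representable through a single octonionic kernel $B(x,y)$ which is left octonionic monogenic in $x$ and octonionic anti-monogenic in $y$, namely $B_{B_8}$, $B_{H^+}$ and $B_T$. By \textbf{Fact B}, together with the compatibility identity $\langle uv,w\rangle=\langle v,\overline{u}w\rangle$ and the three-factor rule ${\rm Re}((ab)c)={\rm Re}(a(bc))$ from \eqref{1.4e}, and using that the norm-$1$ weight factor $\omega$ drops out of $\langle\cdot,\cdot\rangle_0$ (since $\langle\omega f,\omega g\rangle_0=\langle f,g\rangle_0$), it suffices to establish for each domain the conjugate symmetry $\overline{B(x,y)}=B(y,x)$. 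The identity $\langle\mathcal{B}f,g\rangle_0=\langle f,\mathcal{B}g\rangle_0$ then follows line by line as in the Szeg\"o case, and self-adjointness together with the idempotency of $\mathcal{B}$ (it reproduces $\mathcal{B}^2(\Omega)$ and has range in $\mathcal{B}^2(\Omega)$) yields that $\mathcal{B}$ is an orthogonal projector onto $\mathcal{B}^2(\Omega)$ with respect to $\langle\cdot,\cdot\rangle_0$.

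For the half-space I would write $B_{H^+}(x,y)=-2\,\partial_{x_0}S_{H^+}(x,y)$ with $S_{H^+}(x,y)=(\overline{x}+y)/|\overline{x}+y|^8$. Octonionic conjugation is $\mathbb{R}$-linear and commutes with the real derivative $\partial_{x_0}$, and we already know $\overline{S_{H^+}(x,y)}=S_{H^+}(y,x)$; hence $\overline{B_{H^+}(x,y)}=-2\,\partial_{x_0}S_{H^+}(y,x)$. Now $S_{H^+}(y,x)$ depends on the variables $x_0$ and $y_0$ only through the combination $x_0+y_0$: indeed $\overline{y}+x$ has real part $x_0+y_0$ and $|\overline{y}+x|^2=(x_0+y_0)^2+\sum_{k\ge 1}(x_k-y_k)^2$. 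Therefore $\partial_{x_0}$ acts on $S_{H^+}(y,x)$ exactly as $\partial_{y_0}$, and $\overline{B_{H^+}(x,y)}=-2\,\partial_{y_0}S_{H^+}(y,x)=B_{H^+}(y,x)$. The strip case is identical, applied term by term to the series $\sum_{n}(\overline{x}+y+2dn)/|\overline{x}+y+2dn|^8$, each summand depending on $x_0,y_0$ only through $x_0+y_0+2dn$, so that conjugation again turns $\partial_{x_0}$ of the $n$-th term into $\partial_{y_0}$ of the swapped $n$-th term.

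The unit ball is the step I expect to be the genuine obstacle. Here one wants $\overline{B_{B_8}(x,y)}=B_{B_8}(y,x)$, and the natural tools are $\overline{\overline{x}y}=\overline{y}x$, $|1-\overline{x}y|=|1-\overline{y}x|$, $\overline{y^2}=\overline{y}^{\,2}$, the reality of $|x|^2$, and above all Artin's theorem, which permits regrouping all products in $B_{B_8}$ as in an associative algebra since only the two generators $x$ and $y$ occur. The delicate point is the summand carrying the factor $1-|x|^2y^2$: it is not symmetric under the exchange $x\leftrightarrow y$ combined with conjugation, and one must show that the resulting discrepancy does not affect the operator $\mathcal{B}$. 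If the literal Wang--Li representative resists a pointwise identification, the fallback I would use is that the Bergman kernel is determined only modulo the $\langle\cdot,\cdot\rangle_0$-orthogonal complement of $\mathcal{B}^2(\Omega)$, so $B_{B_8}$ may be replaced by its conjugate-symmetric representative without changing $\mathcal{B}$; alternatively, one argues abstractly that $\mathcal{B}$, being assembled from the genuine orthogonal projections on the real component spaces and reproducing all of $\mathcal{B}^2(\Omega)$, is itself an orthogonal projection, whence self-adjoint. In either formulation, settling the $B_8(0,1)$ case closes the proof.
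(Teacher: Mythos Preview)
Your overall strategy coincides with the paper's: invoke Fact~B and reduce self-adjointness to the kernel symmetry $\overline{B(x,y)}=B(y,x)$. Your treatment of $H^+(\mathbb{O})$ and $T$ via $B=-2\,\partial_{x_0}S$ together with the observation that $S$ depends on $x_0,y_0$ only through $x_0+y_0$ is correct and equivalent to the paper's argument; the paper simply expands $-2\,\partial_{x_0}\bigl((\overline{x}+y)/|\overline{x}+y|^8\bigr)$ explicitly and reads off the symmetry by inspection.

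The genuine gap is the unit ball. You correctly sense an asymmetry in the factor ``$1-|x|^2y^2$'', but the resolution is not a fallback argument: the paper's proof proceeds by a direct computation, treating that factor as the real scalar $1-|x|^2|y|^2$ (this is how the Wang--Li kernel is to be read; the displayed ``$y^2$'' is a typographical slip for ``$|y|^2$''). With that reading the term is self-conjugate and symmetric in $x,y$, and together with $\overline{\,\overline{x}y\,}=\overline{y}x$, $|1-\overline{x}y|=|1-\overline{y}x|$, and Artin's theorem (only two generators $x,y$ appear) one obtains $\overline{B_{B_8}(x,y)}=B_{B_8}(y,x)$ immediately. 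Your first fallback (replace the kernel by a conjugate-symmetric representative) is defensible but circuitous; your second fallback is not sound as written, since the total projection $\mathcal{B}=\sum_i\langle\,\cdot\,,B_i\rangle_i\,e_i$ assembles component projections taken with respect to \emph{different} real inner products $\langle\cdot,\cdot\rangle_i$, and there is no general reason this composite is orthogonal for $\langle\cdot,\cdot\rangle_0$. Drop the fallbacks, read the scalar factor as $1-|x|^2|y|^2$, and perform the two-line verification as the paper does.
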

\begin{proof}
To the proof we again can show by a simple calculation that $\overline{B_{B_8}(y,x)}=B_{B_8}(x,y)$.  In the case of the unit ball we get by a computation that:
$$
\overline{B_{B_8}(x,y)} = \frac{6(1-|y|^2|x|^2)(1-\overline{y}x)}{|1-\overline{x}y|^{10}} + \frac{2(1-\overline{y}x)(1-\overline{y}x)}{|1-\overline{x}y|^{10}}
$$
$$
\frac{6(1-|y|^2|x|^2)(1-\overline{y}x)}{|1-\overline{y}x|^{10}} + \frac{2(1-\overline{y}x)(1-\overline{y}x)}{|1-\overline{y}x|^{10}}
= B_{B_8}(y,x).
$$
Let us now turn to the case of the half-space.

Here we start from the formula $B_{H^+}(x,y) = - 2 \frac{\partial }{\partial x_0}
\Bigg\{ \dfrac{\overline{x}+y}{|\overline{x}+y|^8} \Bigg\}$ which can be rewritten as
$$
B_{H^+}(x,y) = - 2 \Bigg(
\frac{|\overline{x}+y|^8 - (\overline{x}+y) 8 |\overline{x}+y|^3(x_0+y_0)}{|\overline{x}+y|^{16}}
\Bigg).
$$
One directly sees that $\overline{B_{H^+}(x,y)}=B_{H^+}(y,x)$ and similarly we may conclude that $\overline{B_{T}(x,y)}=B_{T}(y,x)$.

So in these three cases we have $\overline{B(x,y)}=B(y,x)$ and consequently the Bergman projection is orthogonal and self-adjoint with respect to $\langle, \rangle_0$ in these cases.
\end{proof}

\section{The octonionic slice monogenic setting}

In this section we introduce Hardy and Bergman spaces of octonionic slice monogenic functions according to the classical book structure as defined in Section 2.3. Also in this section, by space we a priori only mean a linear space over $\mathbb{R}$. However, we shall see that the spaces that we are going to introduce are endowed with the $\mathbb{O}$-para-linear structure like their monogenic counterparts that we previously described.

\subsection{General definition of Hardy spaces of slice monogenic functions}

Let $\Omega\subset\mathbb O$ be such that $\Omega\cap\mathbb C_I$ is an axially symmetric simply connected, orientable domain with strongly Lipschitz boundary.
\begin{definition}\label{hardyslice}
We define the Hardy space $\mathbf{H}^2(\Omega,\mathbb O)$ as the closure of the set of slice monogenic functions on $\Omega$ belonging to $L^2(\partial\Omega\cap\mathbb C_I)$ for all $I\in\mathbb S$, namely
$$
\int_{\partial\Omega\cap\mathbb C_I} |f(z)|^2 |ds(z)| <\infty,
$$
where $|ds(z)|$ is the scalar-valued arc element as precisely defined below.
\end{definition}
As we shall describe below, also in the slice monogenic setting it is necessary to use a suitable inner product which takes into account a field defined on the boundary of $\Omega$. In this case, it is the normalized, $\mathbb C_I$-valued, tangential field $t(z)$ on the curve $\partial\Omega\cap\mathbb C_I$, where $I\in\mathbb S$. The need of a field to define the inner product is a peculiarity of the non associative setting. The arc element on the curve can be written as $t(z)|ds(z)|$ where $|t(z)ds(z)|=|ds(z)|$ and $|t(z)|^2=1=\overline{t(z)}t(z)$.
\\
We equip $\mathbf{H}^2(\Omega,\mathbb O)$ with the $\mathbb O$-valued $\mathbb R$-bilinear form
\begin{equation}\label{innerp}
[f,g]_{I}:=\int_{\partial\Omega\cap\mathbb C_I} \overline{(t(z)g(z))}\, |ds(z)|\, (t(z) f(z)), \qquad z=u+Iv
\end{equation}
for some fixed, but arbitrary $I\in\mathbb S$. This bilinear form depends on $\Omega$ and also on the choice of $I$ in $\mathbb S$. Later we shall use the notation $\{\cdot\}_i$ to denote the $i$-part, and we can write
$$
[f,g]_{I}=\sum_{i=0}^7 \{[f,g]_{I}\}_i e_i
$$
where, in particular, $\{[f,g]_{I}\}_0={\rm Re} [f,g]_{I}$.
\\
We now prove that

\begin{proposition}\label{criteriaHardy1}
The set $(\mathbf{H}^2( \Omega, \mathbb{O}),[\cdot,\cdot]_{I})$ is an octonionic Hilbert space in the sense of Definition~\ref{defHilbert}.
\end{proposition}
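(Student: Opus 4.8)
The plan is to verify, one at a time, the six conditions $(i)$--$(vi)$ of Definition~\ref{defHilbert} for the pair $(\mathbf{H}^2(\Omega,\mathbb O),[\cdot,\cdot]_{I})$, following the template of the proof of Proposition~\ref{criteriaHardy} but with the normalized tangential field $t(z)$ in place of the exterior normal field $n(x)$, the scalar arc element $|ds(z)|$ in place of the surface element $|d\sigma(x)|$, and integration over the curve $\partial\Omega\cap\mathbb C_I$ in place of integration over $\partial\Omega$. Since $\mathbf{H}^2(\Omega,\mathbb O)$ is by its very definition the closure in $L^2(\partial\Omega\cap\mathbb C_I)$ of the slice monogenic functions on $\Omega$, it is a closed subspace of a real Hilbert space, so $(\mathbf{H}^2(\Omega,\mathbb O),\langle\cdot,\cdot\rangle_0)$ with $\langle\cdot,\cdot\rangle_0:={\rm Re}[\cdot,\cdot]_{I}$ is itself a real Hilbert space, once we check in $(iii)$ that this real form is exactly the $L^2$ inner product. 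Condition $(i)$ is immediate from linearity of the integral, and $(iv)$ follows since real scalars are central in $\mathbb O$ and $t(z)(f(z)r)=(t(z)f(z))r$ for $r\in\mathbb R$.

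For $(ii)$, because $|ds(z)|$ is a real scalar and $\overline{ab}=\bar b\,\bar a$, the conjugate of the integrand of $[f,g]_{I}$ equals $\overline{\,\overline{(tg)}\,(tf)\,}=\overline{(tf)}\,(tg)$, which is precisely the integrand of $[g,f]_{I}$; hence $\overline{[f,g]_{I}}=[g,f]_{I}$. For $(iii)$, Artin's theorem applied to the single octonion $t(z)f(z)$ gives $\overline{(tf)}\,(tf)=|tf|^{2}=|t|^{2}|f|^{2}=|f|^{2}$, using $|t(z)|=1$ and multiplicativity of the octonionic norm; therefore $[f,f]_{I}=\int_{\partial\Omega\cap\mathbb C_I}|f(z)|^{2}\,|ds(z)|\ge 0$, which also identifies $\langle\cdot,\cdot\rangle_0$ with the $L^{2}$ inner product. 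If $[f,f]_{I}=0$ then $f$ vanishes on $\partial\Omega\cap\mathbb C_I$, and by continuity up to the boundary together with the identity principle and the Representation Formula (Theorem~\ref{representation}) one concludes $f\equiv 0$.

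The substantial points are $(v)$ and $(vi)$, where the non-associativity really enters. For $(v)$, the integrand of $[f\alpha,f]_{I}$ is generated by only the two octonions $\alpha$ and $F(z):=t(z)f(z)$, so by Artin's theorem one may compute inside the associative subalgebra they generate: $\overline{F}\,(F\alpha)=|F|^{2}\alpha=|f|^{2}\alpha$, whence $[f\alpha,f]_{I}=\bigl(\int_{\partial\Omega\cap\mathbb C_I}|f|^{2}\,|ds|\bigr)\alpha=[f,f]_{I}\alpha$. For $(vi)$, the first step is to show that the weight $t(z)$ cancels after passing to real parts: from the compatibility identity $\langle ab,c\rangle=\langle b,\bar ac\rangle$ (Corollary 3.5 of \cite{XLT2008}), Artin's theorem and $|t(z)|=1$ one gets ${\rm Re}\bigl(\overline{(tg)}(tf)\bigr)=\langle tf,tg\rangle=\langle f,g\rangle={\rm Re}(\overline{g}f)$, so that $\langle f,g\rangle_0=\int_{\partial\Omega\cap\mathbb C_I}{\rm Re}\bigl(\overline{g(z)}f(z)\bigr)\,|ds(z)|$. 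Applying this with $f$ replaced by $f\alpha$ and then the trace identity \eqref{1.4e}, ${\rm Re}((ab)c)={\rm Re}(a(bc))$, rewrites $\langle f\alpha,g\rangle_0$ as $\int_{\partial\Omega\cap\mathbb C_I}{\rm Re}\bigl((\overline{g}f)\alpha\bigr)\,|ds|$, and this must then be matched with ${\rm Re}\bigl([f,g]_{I}\alpha\bigr)$ to obtain the $\mathbb O$-para-linearity relation.

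The step I expect to be the main obstacle is exactly this last matching in $(vi)$ (and, in the same spirit, $(v)$): right multiplication by a general $\alpha\in\mathbb O$ does not associate with the tangential weight $t(z)$, so the four-fold product under the integral cannot be rebracketed freely, and the argument has to be routed through the real-valued form --- where the weight disappears --- by a careful combination of Artin's theorem, the trace identity \eqref{1.4e} and the compatibility $\langle ab,c\rangle=\langle b,\bar ac\rangle$, exactly as in the monogenic situation of \cite{ConKra2021}. It is precisely in order to make $(v)$ and $(vi)$ work that the unit-modulus field $t(z)$ was built into the definition \eqref{innerp} of $[\cdot,\cdot]_{I}$. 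A secondary technical point is the closure argument used in $(iii)$: one should check that an $L^2$-limit of slice monogenic functions is again slice monogenic on $\Omega$ --- using the locally uniform control coming from the slice Cauchy formula and the Representation Formula --- so that the identity principle applies and vanishing on $\partial\Omega\cap\mathbb C_I$ really forces $f\equiv 0$.
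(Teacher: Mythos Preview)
Your verification of $(i)$--$(v)$ coincides with the paper's: additivity and $\mathbb R$-homogeneity are immediate, $(ii)$ is the conjugation computation you give, $(iii)$ uses $|t(z)|=1$ together with Artin to reduce $\overline{(tf)}(tf)$ to $|f|^{2}$, and for $(v)$ both you and the paper invoke Artin on the two generators $\alpha$ and $F(z)=t(z)f(z)$.

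The one place where your route diverges from the paper is $(vi)$. You imitate the proof of Proposition~\ref{criteriaHardy}: first eliminate the weight by showing ${\rm Re}\bigl(\overline{(tg)}(tf)\bigr)={\rm Re}(\bar g f)$, then apply the trace identity \eqref{1.4e} to $\bar g(f\alpha)$, and finally try to reconcile $\int{\rm Re}\bigl((\bar g f)\alpha\bigr)\,|ds|$ with ${\rm Re}\bigl([f,g]_{I}\alpha\bigr)$. As you correctly anticipate, this last reconciliation is awkward because $\overline{(tg)}(tf)\neq \bar g f$ in $\mathbb O$, so once the weight has been stripped it cannot be reinserted for free. The paper's proof of Proposition~\ref{criteriaHardy1} does \emph{not} strip the weight at all: it works directly with the weighted integrand and uses \eqref{1.4e} twice in succession,
\[
{\rm Re}\bigl(\overline{(tg)}\,(t(f\alpha))\bigr)
\;\longrightarrow\;
{\rm Re}\bigl(\overline{(tg)}\,((tf)\alpha)\bigr)
\;\longrightarrow\;
{\rm Re}\bigl((\overline{(tg)}(tf))\,\alpha\bigr),
\]
so that the expression $\overline{(tg)}(tf)$ --- which is exactly the integrand of $[f,g]_{I}$ --- appears intact and the identification with ${\rm Re}\bigl([f,g]_{I}\alpha\bigr)$ is immediate. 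In other words, the paper never separates $t$ from $f$ and $g$, and therefore never faces the ``matching'' obstacle you isolate; your detour through the unweighted form $\bar g f$ is what creates it. If you rewrite your argument for $(vi)$ to apply \eqref{1.4e} directly at the level of the weighted product, the difficulty you flag disappears.
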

\begin{proof}
The properties (i), (iv) are immediate. We then prove (ii), (iii), (v), (vi).
\begin{itemize}
\item[(ii)] To prove that $[\cdot,\cdot]_{\Omega,I}$ is hermitian we compute
$$
\overline{[f,g]_{I}}=\overline{\int_{\partial\Omega\cap\mathbb C_I} \overline{(t(z)g(z))}\, |ds(z)|\,(t(z) f(z))}=\int_{\partial\Omega\cap\mathbb C_I} \overline{(t(z)f(z))}\, |ds(z)|\, (t(z) g(z))=[g,f]_I.
$$
\item[(iii)] We have
$$[f,f]_{\Omega,I}   = \int_{\partial\Omega\cap\mathbb C_I} \overline{(t(z)f(z))}\, |ds(z)|\, (t(z)f(z))=\int_{\partial\Omega\cap\mathbb C_I} |t(z)f(z)|^2\, |ds(z)|
$$
$$
=\int_{\partial\Omega\cap\mathbb C_I} |f(z)|^2\, |ds(z)|\geq 0$$
 so in particular $[f,f]_I=0$ if and only if $f=0$ because $f$ needs to be continuous in particular as an element belonging to $H^2$.
\item[(iv)]  Since each real number commutes with any octonion, this property follows directly.
\item[(v)] This is a consequence of Artin's theorem. The expression under the integral is generated by two octonionic elements, namely by $\alpha$ and by $F(z)$ where $F(z):= t(z) f(z)$.
\item[(vi)]  We follow the argument in the proof of Proposition \ref{criteriaHardy} making use of Proposition 1.4 (e) in \cite{dieckmann}, see \eqref{1.4e}:
\[
\begin{split}
{\rm Re}[ f \alpha, g ] &= \int\limits_{\partial \Omega\cap\mathbb C_I} {\rm Re}\left(\overline{(t(z)g(z))} \, |ds(z)|\,(t(z)f(z)\alpha) \right)\\
&= \int\limits_{\partial \Omega\cap\mathbb C_I} {\rm Re}\left(\overline{(t(z)g(z))} \, |ds(z)|\,((t(z)f(z))\alpha) \right)
\\
&= \int\limits_{\partial \Omega\cap\mathbb C_I} {\rm Re}\left(\overline{(t(z)g(z))}  \, |ds(z)|\,(t(z)f(z))\alpha \right)= {\rm Re}([ f , g ]\alpha).
\end{split}
\]
and this is the required para-linearity property.
\end{itemize}
\end{proof}
Below we study the specific case of the unit ball and of the half space. In the first case, we show an alternative way to introduce an inner product which turns out to be equivalent to \eqref{innerp}.

\subsection{Hardy spaces of octonionic slice monogenic functions in the unit ball}


We start with first looking at the particular case of slice monogenic functions in the octonionic unit ball $B_8(0,1)$. In this particular case, they can be written as convergent power series centered at the origin and, as in the classic complex case, we shall exploit this feature to define an inner product. If $f,g\in \mathcal{SM}(B_8(0,1))$ and $f(x)=\sum_{n \geq 0} x^na_n$,
$g(x)=\sum_{n \geq 0} x^nb_n$ we define the $\mathbb O$-valued inner product
\begin{equation}\label{innerslice}
[f,g] :=\sum_{n\geq 0} \overline{b_n}a_n.
\end{equation}
First we observe
\begin{proposition}\label{Prop1}
The inner product \eqref{innerslice} is $\mathbb R$-linear and defines the norm
\begin{equation}\label{normslice}
\|f\| = [f,f]^{1/2}=\left(\sum_{n\geq 0}|a_n|^2\right)^{1/2}.
\end{equation}
\end{proposition}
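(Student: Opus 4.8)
The plan is to reduce everything to the uniqueness of the power series representation. By the Taylor expansion theorem for slice monogenic functions recalled above, a function $f\in\mathcal{SM}(B_8(0,1))$ has a \emph{unique} representation $f(x)=\sum_{n\ge0}x^n a_n$ with $a_n=\frac{1}{n!}\frac{\partial^n f}{\partial x^n}(0)$; consequently the assignment $f\mapsto(a_n)_{n\ge0}$ is $\mathbb{R}$-linear. First I would record that, since every $r\in\mathbb{R}$ lies in the center of $\mathbb{O}$ and in particular commutes and associates with all octonions, the coefficient sequence of $f+h$ is $(a_n+c_n)_n$ and that of $fr$ is $(a_n r)_n$, where $h(x)=\sum_n x^n c_n$ and $fr$ denotes the (again slice monogenic) function $x\mapsto f(x)r$. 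Before proceeding I would also note that $[f,g]$ is well defined, i.e. the series $\sum_n\overline{b_n}a_n$ converges absolutely, whenever $\sum_n|a_n|^2$ and $\sum_n|b_n|^2$ are finite, because
$$
\sum_{n\ge0}\bigl|\overline{b_n}a_n\bigr|=\sum_{n\ge0}|a_n|\,|b_n|\le\Bigl(\sum_{n\ge0}|a_n|^2\Bigr)^{1/2}\Bigl(\sum_{n\ge0}|b_n|^2\Bigr)^{1/2}
$$
by the classical (real) Cauchy--Schwarz inequality, so all the manipulations below are legitimate.

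Next I would check the $\mathbb{R}$-linearity of $[\cdot,\cdot]$ in each argument directly from \eqref{innerslice}. Additivity in the first slot is $[f+h,g]=\sum_n\overline{b_n}(a_n+c_n)=[f,g]+[h,g]$ by distributivity in $\mathbb{O}$, and additivity in the second slot follows analogously using $\overline{b_n+c_n}=\overline{b_n}+\overline{c_n}$. For real homogeneity,
$$
[fr,g]=\sum_{n\ge0}\overline{b_n}(a_n r)=\Bigl(\sum_{n\ge0}\overline{b_n}a_n\Bigr)r=[f,g]r,
$$
where I use that the real scalar $r$ associates with the product $\overline{b_n}a_n$ and may be pulled out of the convergent series; the identity $[f,gr]=r[f,g]$ is obtained in the same way from $\overline{b_n r}=r\,\overline{b_n}$. (This already matches axiom (iv) of Definition~\ref{defHilbert}.)

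Finally, for the norm formula I would evaluate the form on the diagonal: using the octonionic identity $\overline{a}a=|a|^2\in\mathbb{R}^{\ge0}$ for every $a\in\mathbb{O}$,
$$
[f,f]=\sum_{n\ge0}\overline{a_n}a_n=\sum_{n\ge0}|a_n|^2\in[0,+\infty],
$$
which is non-negative; taking square roots yields $\|f\|=[f,f]^{1/2}=\bigl(\sum_{n\ge0}|a_n|^2\bigr)^{1/2}$, and $[f,f]=0$ forces $a_n=0$ for all $n$, hence $f\equiv0$. I do not expect any genuine obstacle here: the statement is essentially the transcription of the classical Hardy-space computation, and the only points requiring a word of care are the absolute convergence of $\sum_n\overline{b_n}a_n$ (handled above) and the observation that no non-associativity ever enters, because at every step the scalar being moved across a product is real and thus central in $\mathbb{O}$.
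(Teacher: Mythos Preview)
Your proposal is correct and follows the same approach as the paper: both verify $\mathbb{R}$-linearity and the norm formula directly from the series definition \eqref{innerslice}. The paper's own proof is a single sentence (``follows by the additivity and linearity of the series'' and ``It is immediate that \eqref{normslice} defines a norm''), so your version simply spells out the details the paper leaves implicit, including the useful remark on absolute convergence and the observation that only real scalars are moved so non-associativity never intervenes.
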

\begin{proof}
The fact that the inner product \eqref{innerslice} is additive and $\mathbb R$-linear follows by the additivity and linearity of the series \eqref{innerslice}. It is immediate that \eqref{normslice} defines a norm.
\end{proof}

\begin{definition}
We define the octonionic Hardy space of the unit ball, and we denote it by  $\mathbf{H}^2(B_8(0,1),\mathbb O)$, the subset of $\mathcal{SM}(B_8(0,1))$ consisting of functions $f(x)=\sum_{n \geq 0} x^na_n$ for which $\sum_{n\geq 0}|a_n|^2<\infty$ and equipped with the inner product \eqref{innerslice}.
\end{definition}
\begin{proposition}
$(\mathbf{H}^2(B_8(0,1),\mathbb O),[\cdot,\cdot])$ is an octonionic Hilbert space in the sense of Definition~\ref{defHilbert}.
\end{proposition}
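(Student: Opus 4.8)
The plan is to check the six axioms of Definition~\ref{defHilbert} directly on the Taylor coefficients, in the spirit of the proof of Proposition~\ref{criteriaHardy}, and then to verify that the associated real inner product space is complete. Throughout write $f(x)=\sum_{n\ge 0}x^na_n$ and $g(x)=\sum_{n\ge 0}x^nb_n$ for elements of $\mathbf{H}^2(B_8(0,1),\mathbb{O})$, so that $\sum_{n\ge 0}|a_n|^2<\infty$ and $\sum_{n\ge 0}|b_n|^2<\infty$. One has to first fix the $\mathbb{O}$-module structure: because of non-associativity the right multiplication of the power series $\sum_n x^na_n$ by a non-real $\alpha$ is \emph{not} the series $\sum_n x^n(a_n\alpha)$, so the module action must be defined coefficient-wise, i.e. $f\alpha$ is the slice monogenic function with $n$-th Taylor coefficient $a_n\alpha$. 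This is again an element of $\mathbf{H}^2(B_8(0,1),\mathbb{O})$ since $|a_n\alpha|=|a_n|\,|\alpha|$, and for real $\alpha$ it coincides with the pointwise product.

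First I would dispose of the routine axioms. Additivity (i) and $\mathbb{R}$-homogeneity (iv) are precisely the $\mathbb{R}$-bilinearity already recorded in Proposition~\ref{Prop1}. The Hermitian property (ii) follows from $\overline{pq}=\overline{q}\,\overline{p}$ together with the continuity of conjugation: $\overline{[f,g]}=\sum_{n\ge 0}\overline{\overline{b_n}a_n}=\sum_{n\ge 0}\overline{a_n}b_n=[g,f]$. Strict positivity (iii) is the norm identity \eqref{normslice}, $[f,f]=\sum_{n\ge 0}|a_n|^2\ge 0$, which vanishes exactly when all $a_n=0$, hence iff $f\equiv 0$, since a slice monogenic function on the ball is determined by its Taylor coefficients.

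The two genuinely non-associative axioms are handled summand by summand. For (v), Artin's theorem makes the subalgebra generated by the pair $a_n,\alpha$ associative, so $\overline{a_n}(a_n\alpha)=(\overline{a_n}a_n)\alpha=|a_n|^2\alpha$; summing over $n$ (the octonion series converges because $\sum_{n\ge 0}|a_n|^2<\infty$) gives $[f\alpha,f]=\big(\sum_{n\ge 0}|a_n|^2\big)\alpha=[f,f]\alpha$. For (vi), applying the identity \eqref{1.4e} to each term gives ${\rm Re}\big(\overline{b_n}(a_n\alpha)\big)={\rm Re}\big((\overline{b_n}a_n)\alpha\big)$, whence ${\rm Re}[f\alpha,g]=\sum_{n\ge 0}{\rm Re}\big((\overline{b_n}a_n)\alpha\big)={\rm Re}\big([f,g]\alpha\big)$, which is exactly the $\mathbb{O}$-para-linearity.

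It remains to verify that $\big(\mathbf{H}^2(B_8(0,1),\mathbb{O}),\langle\cdot,\cdot\rangle_0\big)$ with $\langle f,g\rangle_0={\rm Re}[f,g]=\sum_{n\ge 0}{\rm Re}(\overline{b_n}a_n)$ is a real Hilbert space. The assignment $f\mapsto(a_n)_{n\ge 0}$ is an isometric $\mathbb{R}$-linear bijection onto $\ell^2(\mathbb{N},\mathbb{O})\cong\ell^2(\mathbb{N},\mathbb{R}^8)$, so only completeness needs an argument: a Cauchy sequence in $\mathbf{H}^2(B_8(0,1),\mathbb{O})$ corresponds to a Cauchy sequence of coefficient vectors, which converges in $\ell^2$ to some $(a_n)_{n\ge 0}$ with $\sum_{n\ge 0}|a_n|^2<\infty$; since this forces $(|a_n|)_n$ bounded, the series $\sum_{n\ge 0}x^na_n$ converges for $|x|<1$ and, by the power series characterization of slice monogenicity recalled in Section~2.3, defines an element of $\mathcal{SM}(B_8(0,1))$, hence of $\mathbf{H}^2(B_8(0,1),\mathbb{O})$, to which the sequence converges in norm. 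I expect the only delicate point to be the bookkeeping of the module structure at the outset; once it is fixed coefficient-wise, everything reduces term-by-term to Artin's theorem and \eqref{1.4e} exactly as in Proposition~\ref{criteriaHardy}, and the Moufang identities enter only later, when \eqref{innerslice} is to be matched with the geometric inner product \eqref{innerp}.
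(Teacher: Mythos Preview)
Your proof is correct and follows essentially the same approach as the paper: axioms (i)--(iv) are immediate, (v) is obtained from $\overline{a_n}(a_n\alpha)=(\overline{a_n}a_n)\alpha$ (the paper invokes left alternativity, you invoke Artin's theorem, which amounts to the same thing here), and (vi) is obtained termwise from \eqref{1.4e}. You are in fact more careful than the paper in two respects: you make explicit that the $\mathbb{O}$-module action must be defined coefficient-wise (the paper uses this implicitly when it writes $[f\alpha,f]=\sum_n\overline{a_n}(a_n\alpha)$ but never says so), and you supply the completeness argument via the $\ell^2$ isometry, which Definition~\ref{defHilbert} requires but the paper's proof omits.
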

\begin{proof}
Since (i) and (iv) are trivial and Proposition~\ref{Prop1} holds, it remains to verify the properties (ii) and (v), (vi) of Definition~\ref{defHilbert}. We consider $f(x)=\sum_{n\geq 0} x^na_n$,
$g(x)=\sum_{n\geq 0} x^nb_n$ in $\mathbf{H}^2(B_8(0,1),\mathbb O)$.
\begin{itemize}
\item[(ii)] Since $[g,f]= \sum_{n\geq 0}\overline{a_n}b_n$ it is clear that $\overline{[g,f]}=\overline{\sum_{n\geq 0}\overline{a_n}b_n}= \sum_{n\geq 0}\overline{b_n}a_n=[f,g].$
\item[(v)]Let $\alpha \in \mathbb{O}$. Then
$$
[f \alpha,f] = \sum_{n\geq 0}\overline{a_n}(a_n\alpha)= \sum_{n\geq 0}(\overline{a_n}a_n)\alpha=[f,f]\alpha,
$$
where we used left alternativity.
\item[(vi)] This property ($\mathbb{O}$-para-linearity) follows from the already quoted Proposition 1.4 (e) in \cite{dieckmann} recalled in \eqref{1.4e}. In fact, for $\alpha\in\mathbb O$ we have
$$
{\rm Re} \left([f\alpha,g]\right)={\rm Re} \left(\sum_{n\geq 0}\overline{b_n}(a_n\alpha)\right)=
\left(\sum_{n\geq 0}{\rm Re}(\overline{b_n}(a_n\alpha))\right)=\left(\sum_{n\geq 0}{\rm Re}((\overline{b_n}a_n)\alpha)\right)
$$
$$
={\rm Re}\left(\sum_{n\geq 0}(\overline{b_n}a_n)\alpha\right)= {\rm Re}([f,g]\alpha).
$$
\end{itemize}
\end{proof}

\par\medskip\par
Next we provide the counterpart of the Szeg\"o kernel in this framework motivated by the quaternionic case. Let us consider
\begin{equation}\label{kesse}
{\mathcal S}(y,x)= (1-2{\rm {\rm Re}}(x)y +|x|^2y^2)^{-1}(1-yx)=(1-\bar y \bar x)(1-2{\rm {\rm Re}}(y)\bar x +|y|^2\bar x^2)^{-1}.
\end{equation}
This kernel is well-defined for $1-2{\rm {\rm Re}}(x)y +|x|^2y^2 \not=0$, namely for $x\not\in [y^{-1}]$,  and it is immediate that it is octonionic Hermitian, namely:
$$
\overline{{\mathcal S}(y,x)}={\mathcal S}(x,y).
$$
The kernel is called
the octonionic slice monogenic Szeg\"o kernel. It is
left slice monogenic in $y$ and right slice monogenic in $\bar x$.
Note also that for $y,x\in B_8(0,1)$ we have ${\mathcal S}(y,x)=\sum_{n\geq 0} y^n \bar x^n$ and that ${\mathcal S}(\cdot,x)\in \mathbf{H}^2(B_8(0,1),\mathbb O)$ in fact $|{\mathcal S}(y,x)|^2 \le \sum_{n\geq 0} |x|^{2n}<\infty$.
Like in the monogenic case we can prove:
\begin{theorem}
The space $\mathbf{H}^2(B_8(0,1),\mathbb O)$ has a uniquely defined reproducing kernel having the reproducing property:
$$
[ f, {\mathcal S}(\cdot,x)]=f(x),
$$
for any $f\in\mathbf{H}^2(B_8(0,1),\mathbb O)$.
\end{theorem}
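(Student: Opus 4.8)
The plan is to verify directly that the explicit kernel $\mathcal S(\cdot,x)$ from \eqref{kesse} reproduces every $f\in\mathbf H^2(B_8(0,1),\mathbb O)$, and then to obtain uniqueness from the strict-positivity axiom (iii) of Definition~\ref{defHilbert}, which is available because $(\mathbf H^2(B_8(0,1),\mathbb O),[\cdot,\cdot])$ has already been shown to be an octonionic Hilbert space.

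First I would record that $\mathcal S(\cdot,x)$ is an admissible test element and that the pairing is well defined. For $x\in B_8(0,1)$ one has $\mathcal S(y,x)=\sum_{n\ge 0}y^n\,\overline{x}^{\,n}$ for $y\in B_8(0,1)$, so $\mathcal S(\cdot,x)$ has Taylor coefficients $b_n=\overline{x}^{\,n}$ with $\sum_{n\ge 0}|b_n|^2=\sum_{n\ge 0}|x|^{2n}=(1-|x|^2)^{-1}<\infty$; hence $\mathcal S(\cdot,x)\in\mathbf H^2(B_8(0,1),\mathbb O)$, as already noted above. Writing $f(y)=\sum_{n\ge 0}y^n a_n$ with $\sum|a_n|^2<\infty$, the defining series $[f,\mathcal S(\cdot,x)]=\sum_{n\ge 0}\overline{b_n}a_n$ converges absolutely, since $\sum_{n\ge 0}|b_n||a_n|\le(\sum|a_n|^2)^{1/2}(\sum|b_n|^2)^{1/2}$ by the ordinary Cauchy--Schwarz inequality for real sequences.

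The core step is then the computation
$$
[f,\mathcal S(\cdot,x)]=\sum_{n\ge 0}\overline{b_n}\,a_n=\sum_{n\ge 0}\overline{\overline{x}^{\,n}}\,a_n=\sum_{n\ge 0}x^n a_n=f(x),
$$
where the middle identity uses $\overline{\overline{x}^{\,n}}=x^n$ (octonionic conjugation is an additive anti-involution and powers of a single element are unambiguous by power-associativity, so this follows by a one-line induction), and the last equality is just the convergent Taylor expansion of $f$ evaluated at the interior point $x$. This is precisely the reproducing property. For uniqueness, if $K(\cdot,x)$ and $K'(\cdot,x)$ both belong to $\mathbf H^2(B_8(0,1),\mathbb O)$ and both reproduce, then additivity of $[\cdot,\cdot]$ in the second slot gives $[f,K(\cdot,x)-K'(\cdot,x)]=0$ for all $f$; since $K(\cdot,x)-K'(\cdot,x)$ is again in $\mathbf H^2$ (its coefficient sequence lies in $\ell^2$), taking $f=K(\cdot,x)-K'(\cdot,x)$ and applying axiom (iii) of Definition~\ref{defHilbert} forces $K(\cdot,x)=K'(\cdot,x)$, and $x$ was arbitrary.

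I do not expect a genuine obstacle here: the slice setting reduces everything to $\ell^2$-sequences of octonions. The only points needing a little care are the identity $\overline{\overline{x}^{\,n}}=x^n$, which must be checked so that no spurious associator is introduced, and the absolute convergence of all the series involved, which legitimizes the termwise manipulations; both are routine. If one additionally wanted to identify $\mathcal S$ as the reproducing kernel for the boundary-integral inner product \eqref{innerp}, one would further need the equivalence of \eqref{innerp} and \eqref{innerslice} on the ball, but the statement as phrased only concerns $[\cdot,\cdot]$ from \eqref{innerslice}.
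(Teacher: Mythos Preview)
Your proof is correct and follows essentially the same route as the paper: both compute $[f,\mathcal S(\cdot,x)]=\sum_{n\ge 0}\overline{\overline{x}^{\,n}}\,a_n=\sum_{n\ge 0}x^n a_n=f(x)$ directly from the sequential definition of the inner product. Your version is in fact more complete, since you justify the convergence, spell out the identity $\overline{\overline{x}^{\,n}}=x^n$, and supply the uniqueness argument via axiom~(iii), none of which the paper's three-line proof addresses explicitly.
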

\begin{proof}
Let $f(x)=\sum_{n\geq 0} x^na_n$.
We immediately have the following equalities
$$
[ f(y), {\mathcal S}(y,x)]=\left[\sum_{n\geq 0} y^n a_n, \sum_{n\geq 0} y^n \bar x^n\right]=\sum_{n\geq 0} x^n a_n=f(x),
$$
and the statement follows.
\end{proof}
We now go back to the description at the beginning of this section, namely to the analogue of a geometric characterization of the Hardy space, as we did in the monogenic case in Section 3.
We consider functions $\mathcal{SM}(B_8(0,1)$ and adapt \eqref{innerp} to this case, so that $z=e^{I\theta}$, $t(z)=Ie^{I\theta}$, $|ds(z)|= d\theta$, $\theta\in [0,2\pi]$:
\begin{equation}\label{L2}
[f,f]_I:= \frac{1}{2\pi}\int_0^{2\pi} \left(\overline{Ie^{I\theta} f(e^{I\theta})}) d\theta (Ie^{I\theta}f(e^{I\theta})) \right)<\infty
\end{equation}
for some $I\in\mathbb S$. We have put the constant $1/2\pi$ in front in order to have $[1,1]_I=1$.
\\ We have:
\begin{proposition}\label{prop47}
(i) The function $f\in\mathbf{H}^2(B_8(0,1),\mathbb O)$ if and only if $[f,f]_I$ is finite for some $I\in\mathbb S$.
\\
(ii) Let $I,J\in\mathbb S$. Then $[f,f]_I$ is finite if and only if $[f,f]_J$ is finite.
\end{proposition}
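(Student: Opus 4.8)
The plan is to reduce both statements to the single identity
$$
[f,f]_I=\sum_{n\ge 0}|a_n|^2\in[0,+\infty],
$$
valid for \emph{every} $I\in\mathbb S$, where $f(x)=\sum_{n\ge 0}x^na_n$ is the Taylor expansion of the slice monogenic function $f$ on $B_8(0,1)$. Since the right-hand side does not involve $I$, this yields (i) at once (recall that $f\in\mathbf{H}^2(B_8(0,1),\mathbb O)$ means precisely $\sum_n|a_n|^2<\infty$) and (ii) because $[f,f]_I$ and $[f,f]_J$ then carry the common value $\sum_n|a_n|^2$; it also shows in passing that the diagonal of $[\cdot,\cdot]_I$ coincides with the norm \eqref{normslice}, which is the announced equivalence between \eqref{innerslice} and \eqref{L2}. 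The first step, purely algebraic, is to rewrite $[f,f]_I$ as a circular mean of $|f|^2$: in \eqref{L2} the scalar $d\theta$ factors out of the octonionic product, and multiplicativity of the octonionic norm together with $|I|=|e^{I\theta}|=1$ gives $|Ie^{I\theta}f(e^{I\theta})|^2=|f(e^{I\theta})|^2$, so that $[f,f]_I=\frac1{2\pi}\int_0^{2\pi}|f(e^{I\theta})|^2\,d\theta$ and, with $e^{I\theta}$ replaced by $re^{I\theta}$, the same holds for the dilation $x\mapsto f(rx)$, $0\le r<1$.

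Next I would prove the Parseval-type identity $\frac1{2\pi}\int_0^{2\pi}|f(re^{I\theta})|^2\,d\theta=\sum_{n\ge 0}r^{2n}|a_n|^2$ for $0\le r<1$. As the power series converges absolutely and uniformly on $\{|x|=r\}$, one expands and integrates term by term,
$$
\frac1{2\pi}\int_0^{2\pi}|f(re^{I\theta})|^2\,d\theta=\sum_{n,m\ge 0}r^{n+m}\,\frac1{2\pi}\int_0^{2\pi}\langle e^{In\theta}a_n,\,e^{Im\theta}a_m\rangle\,d\theta,
$$
where $\langle x,y\rangle={\rm Re}(x\overline y)$ is the Euclidean product on $\mathbb O$. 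The only delicate point is the reordering inside the cross terms: starting from $\langle e^{In\theta}a_n,e^{Im\theta}a_m\rangle={\rm Re}\bigl((\overline{a_m}\,e^{-Im\theta})(e^{In\theta}a_n)\bigr)$, applying first \eqref{1.4e} and then Artin's theorem to the subalgebra generated by the \emph{two} elements $I$ and $a_n$ — legitimate since $e^{-Im\theta}$ and $e^{In\theta}$ both lie in $\mathbb C_I$ — one gets
$$
\langle e^{In\theta}a_n,\,e^{Im\theta}a_m\rangle={\rm Re}\bigl(\overline{a_m}\,(e^{I(n-m)\theta}a_n)\bigr),\qquad \frac1{2\pi}\int_0^{2\pi}e^{Ik\theta}\,d\theta=\delta_{k,0},
$$
so that, by $\mathbb R$-bilinearity of the octonionic product, every $n\ne m$ term integrates to $0$ and the diagonal term contributes ${\rm Re}(\overline{a_n}a_n)=|a_n|^2$. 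This is precisely where the octonionic subtlety sits, and it is the step I expect to be the main obstacle: one has to be sure that non-associativity does not spoil the orthogonality of distinct Fourier modes, and what makes it go through is that only the single imaginary unit $I$ appears in the exponentials, so that together with any one coefficient $a_n$ one remains inside an associative (quaternionic) subalgebra.

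Finally I would let $r\uparrow 1$. The quantity $\sum_n r^{2n}|a_n|^2$ is non-decreasing in $r$, and $|f_I|^2$ is subharmonic on $B_8(0,1)\cap\mathbb C_I$ — the real components of the restriction $f_I$ being harmonic, since applying $\partial_u-I\partial_v$ to the defining equation $(\partial_u+I\partial_v)f_I=0$ and invoking left alternativity gives $\Delta f_I=0$ — so the circular means increase, as $r\uparrow1$, to $\frac1{2\pi}\int_0^{2\pi}|f(e^{I\theta})|^2\,d\theta=[f,f]_I$, the last quantity being read as this supremum when $f$ has no continuous boundary extension. Hence
$$
[f,f]_I=\sup_{0\le r<1}\sum_{n\ge 0}r^{2n}|a_n|^2=\sum_{n\ge 0}|a_n|^2\in[0,+\infty],
$$
which is the desired identity; (i) and (ii) follow at once as indicated above. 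The Splitting Lemma~\ref{SLemma} offers an alternative organisation of the middle step, reducing $|f_I|^2$ to a sum of squared moduli of $\mathbb C_I$-valued holomorphic functions and appealing to the classical complex disc theory, but the route sketched here is self-contained.
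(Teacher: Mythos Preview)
Your proposal is correct and follows essentially the same route as the paper: both arguments establish the identity $[f,f]_I=\sum_{n\ge 0}|a_n|^2$ by expanding the power series, using \eqref{1.4e} together with Artin's theorem to reorder the cross terms, and invoking the orthogonality $\frac{1}{2\pi}\int_0^{2\pi}e^{Ik\theta}\,d\theta=\delta_{k,0}$. Your version is in fact slightly more careful than the paper's, since you work first at radius $r<1$ (where uniform convergence is automatic) and pass to the boundary via monotonicity/subharmonicity, whereas the paper computes directly on $|z|=1$.
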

\begin{proof}
To show (i) we write $f\in\mathcal{SM}(B_8(0,1))$ as $f(x)=\sum_{n\geq 0} x^na_n$ and using repeatedly \eqref{1.4e} we write the integrand in \eqref{L2} (where we write, for simplicity $z$ instead of $e^{I\theta}$) as
\[
\begin{split}
&{\rm Re}\left(\left(\overline{t(z)(\sum_{n\geq 0} z^na_n})\right)|ds(z)| \left(t(z)(\sum_{n\geq 0} z^na_n)\right) \right)\\ &=
{\rm Re}\left(\left((\sum_{n\geq 0} \overline{a_n} {\bar z}^n) \overline{t(z)}\right)|ds(z)| \left(t(z)\sum_{n\geq 0} z^na_n\right) \right)\\
&={\rm Re}\left((\sum_{n\geq 0} \overline{a_n} {\bar z}^n\, ) \left(\overline{t(z)}|ds(z)| \left(t(z)(\sum_{n\geq 0} z^na_n)\right)\right)\right)={\rm Re}\left((\sum_{n\geq 0} \overline{a_n} {\bar z}^n \,) \overline{t(z)}|ds(z)| t(z) (\sum_{n\geq 0} z^na_n) \right)\\
&={\rm Re}\left((\sum_{n\geq 0} \overline{a_n} {\bar z}^n \,) |ds(z)| (\sum_{n\geq 0} z^na_n) \right)
={\rm Re}\left(\sum_{n,m\geq 0} \overline{a_n} {\bar z}^n \, |ds(z)|\, z^m\, a_m) \right)\\
&={\rm Re}\left(\sum_{n,m\geq 0} \overline{a_n} ({\bar z}^n |ds(z)|\, z^m)\, a_m)  \right).\\
\end{split}
\]
Since, by Proposition \ref{criteriaHardy1}, $[f,f]_I$ is real, using the previous calculuations we have that \eqref{L2} rewrites as
\[
[f,f]_I=\frac{1}{2\pi}\int_0^{2\pi} \overline{(Ie^{I\theta}f(e^{I\theta}))}\, d\theta\, (Ie^{I\theta}f(e^{I\theta}))=\frac{1}{2\pi}{\rm Re}\left(\int_0^{2\pi}\overline{(Ie^{I\theta}f(e^{I\theta}))}\, d\theta\, (Ie^{I\theta}f(e^{I\theta}))\right)
$$
$$
=\frac{1}{2\pi}\int_0^{2\pi} {\rm Re}\left(\overline{(Ie^{I\theta}f(e^{I\theta}))}\, (Ie^{I\theta}f(e^{I\theta}))\, d\theta\right)
= \frac{1}{2\pi}\int_0^{2\pi}{\rm Re}\left(\sum_{n,m\geq 0} \overline{a_n} (e^{-In\theta} \, e^{Im\theta})\, a_m)  \right)\, d\theta.
\]
Since
$$
\int_0^{2\pi} e^{-In\theta}\, e^{Im\theta}\, d\theta=2\pi\delta_{n,m}
$$
we conclude that
$$
[f,f]_I=\sum_{n\geq 0}|a_n|^2
$$
and the statement follows. This also proves the validity of (2).
\end{proof}
From this result we now obtain:
\begin{corollary}
The Hardy space $\mathbf{H}^2(B_8(0,1),\mathbb O)$ consists of the subset of $\mathcal{SM}(B_8(0,1))$ such that $[f,f]_I$ is finite for some $I$ (and so for all) in $\mathbb{S}$.
\end{corollary}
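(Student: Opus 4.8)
The plan is to derive this statement as an essentially immediate consequence of Proposition~\ref{prop47}, so the work has already been done and only needs to be assembled. Recall that by definition $\mathbf{H}^2(B_8(0,1),\mathbb O)$ is the set of $f\in\mathcal{SM}(B_8(0,1))$, $f(x)=\sum_{n\geq 0}x^na_n$, for which $\sum_{n\geq 0}|a_n|^2<\infty$. Hence the only thing to check is that this summability condition on the Taylor coefficients is equivalent to the finiteness of the geometric quantity $[f,f]_I$ defined in \eqref{L2}, and that this equivalence does not depend on the choice of $I\in\mathbb S$.

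First I would invoke Proposition~\ref{prop47}(i), which gives exactly the identity $[f,f]_I=\sum_{n\geq 0}|a_n|^2$ for any $f\in\mathcal{SM}(B_8(0,1))$ and any fixed $I\in\mathbb S$; in particular $f\in\mathbf{H}^2(B_8(0,1),\mathbb O)$ if and only if $[f,f]_I<\infty$ for that $I$. Then I would appeal to Proposition~\ref{prop47}(ii), which states that for $I,J\in\mathbb S$ the finiteness of $[f,f]_I$ is equivalent to the finiteness of $[f,f]_J$; equivalently, since by the computation in the proof of Proposition~\ref{prop47} both quantities equal $\sum_{n\geq 0}|a_n|^2$, they are in fact simultaneously finite and equal. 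Combining the two parts, ``$[f,f]_I$ finite for some $I$'' is equivalent to ``$[f,f]_I$ finite for all $I$'', and either condition characterizes membership in $\mathbf{H}^2(B_8(0,1),\mathbb O)$.

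There is no real obstacle here: the substantive content — rewriting the integrand of \eqref{L2} via repeated use of \eqref{1.4e} and the orthogonality relation $\int_0^{2\pi}e^{-In\theta}e^{Im\theta}\,d\theta=2\pi\delta_{n,m}$ to collapse $[f,f]_I$ to $\sum_{n\geq 0}|a_n|^2$ — is precisely what was established in Proposition~\ref{prop47}. Thus the corollary is a one-line reformulation of that proposition, and I would simply state this, citing Proposition~\ref{prop47}(i)--(ii), rather than reproducing any calculation.
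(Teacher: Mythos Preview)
Your proposal is correct and matches the paper's treatment exactly: the corollary is stated immediately after Proposition~\ref{prop47} with no separate proof, precisely because it is the one-line reformulation you describe, combining parts (i) and (ii) of that proposition.
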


\begin{remark}\label{reprodpro} We note that to obtain a description of the kernel according to the inner product $[\cdot,\cdot]_I$ we consider $x,y\in\mathbb C_I$ and $[ f(y), {\mathcal S}(y,x)]_I$ where ${\mathcal S}(y,x)$ is defined in \eqref{kesse} and the  inner product is \eqref{innerp}.
We have:
\[
\begin{split}
[ f(y), {\mathcal S}(y,x)]_I&=\frac{1}{2\pi}\int_{S^7\cap\mathbb C_I} \overline{(t(y) {\mathcal S}(y,x) )}\, |ds(z)|\, (t(y) f(y))\\
&= \frac{1}{2\pi}\int_{S^7\cap\mathbb C_I} (\overline{{\mathcal S}(y,x)}\, \, \overline{t(y)}  )\, |ds(z)|\, (t(y) f(y))
\end{split}
\]
and using Artin's theorem we obtain that the previous expression equals:
\[
\begin{split}
&=\frac{1}{2\pi}\int_{S^7\cap\mathbb C_I} {\mathcal S}(x,y)\, |ds(z)|\, f(y)\\
&=\frac{1}{2\pi}\int_{S^7\cap\mathbb C_I} {\mathcal S}(x,y)\, |ds(z)|\,(F_1(y)+F_2(y)I_2+(G_1(y)+G_2(y)I_2)I_4)  =f(x)
\end{split}
\]
where we used the Splitting Lemma to write $f(y)$, again Artin's theorem and applying the reproducing property to the components of $f$. The reproducing property of kernel holds on each complex plane. The values of the function $f$ can be computed at any point in the unit ball using the Representation Formula.
\end{remark}
\subsection{Hardy space of slice monogenic functions of the half-space}
Let us now consider the octonionic half-space
$$H^+(\mathbb O) =\{ x\in\mathbb O\ |\
{\rm Re}(x)>0\}$$ and set $H_I^+(\mathbb O)= H^+(\mathbb O)\cap
\mathbb{C}_I$. We will denote by $f_I$ the restriction of a function $f$ defined on $H^+(\mathbb O)$ to $H_I^+(\mathbb O)$. According to Definition \ref{hardyslice}, we set
\[
\mathbf{H}_2(H_I^+(\mathbb O))=\{f\ {\rm slice\ monogenic\ in\ H^+(\mathbb O)}\ : \
\int_{-\infty}^{+\infty}   |f_I(Iv)|^2 dv <\infty\},
\]
where the variable in $\mathbb C_I$ is denoted by $u+Iv$ and $f(Iv)$ denotes the non-tangential value of $f$ at $Iv$. Note that these values  exist almost everywhere, in fact any  $f\in \mathbf{H}_2(H_I^+(\mathbb O))$ when restricted to a complex plane $\mathbb C_I$ can be written, by the Splitting lemma, as $f(z)=F_1(z)+F_2(z)I_2+(G_1(z)+G_2(z)I_2)I_4$ for suitable
$I_2$ and $I_4$ in $\mathbb{S}$, with
$F_1,F_2,G_1,G_2$ holomorphic functions from $\Omega\cap \mathbb{C}_{I}$.
Since the non-tangential values of $F_1, F_2$ and $G_1,G_2$ exist almost everywhere at $Iv$, also the non-tangential value of $f$ exists at $Iv$ a. e. on $H_I^+(\mathbb O)$ and $f_I(Iv)=F_1(Iv)+F_2(Iv)I_2+(G_1(Iv)+G_2(Iv)I_2)I_4$ almost everywhere.

The inner product \eqref{innerp} in $\mathbf{H}_2(H_I^+(\mathbb O))$ can be rewritten as
\begin{equation}\label{H2O+}
[f,g]_{I}=\int_{-\infty}^{+\infty}
\overline{(t(Iv)g_I(Iv))}(t(Iv) f_I(Iv)) dv=\int_{-\infty}^{+\infty}
\overline{(Ig_I(Iv))} (If_I(Iv)) dv ,
\end{equation}
where $t(Iv)=I$ and $f_I(Iv)$, $g_I(Iv)$ denote the nontangential values of $f,g$
at $Iv$ on $H_I^+(\mathbb O)$.
\\
By Proposition \ref{criteriaHardy1}, $(\mathbf{H}_2(H_I^+(\mathbb O)), [
\cdot,\cdot]_{I}$is an octonionic Hilbert space.
This scalar product gives the norm
\[
\|f\|_{\mathbf{H}_2(H_I^+(\mathbb O))}=\left(\int_{-\infty}^{+\infty}  |
f_I(Iv)|^2 dv\right)^{\frac 12},
\]
which is finite by our assumptions. By the Representation Formula, for any $I,J\in\mathbb S$, $[
f,f]_{I}$ is finite if and only if $[
f,f]_{J}$ is finite and this gives meaning to the following:
\begin{definition}
The Hardy space of the half-space $\mathbf{H}_2(H^+(\mathbb O))$ is the set of slice monogenic functions on $H^+(\mathbb O)$ such that $[f,f]_{I}$ is finite for some $I\in\mathbb S$.
\end{definition}

The Hardy space $\mathbf{H}_2(H^+(\mathbb O))$ has a reproducing kernel, which is the slice monogenic extension of the kernel of the Hardy space of the right half space, namely:
\begin{proposition}
The function
\begin{equation}\label{kernel}
k(x,y)=\frac{1}{2\pi}(\bar x +\bar y)(|x|^2 +2{\rm {\rm Re}}(x) \bar y
+\bar y^2)^{-1}
\end{equation}
is slice hyperholomorphic in $x$ and $\bar y$ on the left and on
the right, respectively in its domain of definition, i.e. for $x\not\in[\bar y]$.
The restriction of $2\pi k(x,y)$ to $\mathbb{C}_I\times
\mathbb{C}_I$ coincides with $(x+\bar y)^{-1}$. Moreover $k(x,y)$ can be written as:
\begin{equation}\label{kernel1}
k(x,y)=\frac{1}{2\pi}(|y|^2 +2{\rm {\rm Re}}(y) x + x^2)^{-1}( x + y).
\end{equation}
\end{proposition}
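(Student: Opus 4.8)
\emph{Plan of proof.} The function $k(x,y)$ is intended as the slice monogenic extension --- left in $x$ and right in $\bar y$ --- of the classical Szeg\"o kernel $\tfrac{1}{2\pi}(z+\bar w)^{-1}$ of the complex half-plane $H^+(\mathbb O)\cap\mathbb C_I$. Accordingly I would prove the three assertions in the order: first the algebraic equivalence of the two expressions \eqref{kernel} and \eqref{kernel1}; then the slice monogenicity, read off from whichever of the two displays it; and finally the restriction to a single slice.

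For the equivalence, fix $x,y\in\mathbb O$ and set $P:=|y|^2+2{\rm Re}(y)x+x^2$, $Q:=x+y$, $R:=\bar x+\bar y$, $S:=|x|^2+2{\rm Re}(x)\bar y+\bar y^2$. Since $\bar x=2{\rm Re}(x)-x$ and $\bar y=2{\rm Re}(y)-y$, all four elements $P,Q,R,S$ lie in the subalgebra of $\mathbb O$ generated by the two elements $x$ and $y$, which is associative by Artin's theorem. Hence, wherever $P$ and $S$ are invertible, the identity $P^{-1}Q=RS^{-1}$ --- i.e.\ \eqref{kernel1} $=$ \eqref{kernel} --- is equivalent to the polynomial identity $QS=PR$, and the latter is checked by a direct expansion using $x\bar x=\bar x x=|x|^2$, $x^2=2{\rm Re}(x)x-|x|^2$ and $\bar y^2=2{\rm Re}(y)\bar y-|y|^2$; both sides collapse to the same octonion. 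This proves \eqref{kernel1} and at the same time pins down the (common) domain of definition: the set where the real-coefficient quadratics $x^2+2{\rm Re}(y)x+|y|^2$ and $\bar y^2+2{\rm Re}(x)\bar y+|x|^2$ are invertible, which is the stated domain, its complement being exactly the axially symmetric set through the singularity $x=-\bar y$ of $(x+\bar y)^{-1}$.

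For the slice monogenicity I would use \eqref{kernel1} to write $k(x,y)=\tfrac{1}{2\pi}\,p_y(x)^{-1}(x+y)$ with $p_y(x):=x^2+2{\rm Re}(y)x+|y|^2$. Its coefficients $2{\rm Re}(y)$ and $|y|^2$ are real, so $p_y$ is an intrinsic polynomial in $x$ and $p_y(x)^{-1}$ is an intrinsic slice function of $x$ on its domain, whereas $x\mapsto x+y$ is a left slice monogenic polynomial. Invoking the product rule that an intrinsic slice function times a left slice monogenic function is again left slice monogenic --- valid over $\mathbb O$ by \cite{GS,GP}, each step of its proof coupling $x$ to only one further octonion and hence being governed by Artin's theorem --- one gets that $x\mapsto k(x,y)$ is left slice monogenic. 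Symmetrically, putting $w:=\bar y$ in \eqref{kernel}, the factor $(|x|^2+2{\rm Re}(x)w+w^2)^{-1}$ is intrinsic in $w$ (its coefficients $2{\rm Re}(x)$ and $|x|^2$ being real) and $w\mapsto\bar x+w$ is right slice monogenic in $w$, so the product is right slice monogenic in $w=\bar y$. There remains the restriction to $\mathbb C_I\times\mathbb C_I$: if $x$ and $\bar y$ lie in a common $\mathbb C_I$ they commute, so $S=x\bar x+(x+\bar x)\bar y+\bar y^2=(x+\bar y)(\bar x+\bar y)$, and, all factors commuting,
$$
k(x,y)=\frac{1}{2\pi}(\bar x+\bar y)\bigl[(x+\bar y)(\bar x+\bar y)\bigr]^{-1}=\frac{1}{2\pi}(x+\bar y)^{-1},
$$
so $2\pi k$ restricts to $(x+\bar y)^{-1}$, the Szeg\"o kernel of $H^+(\mathbb O)\cap\mathbb C_I$ familiar from classical complex analysis. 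Together with the slice monogenicity this exhibits $k$ as the asserted slice monogenic extension, and the reproducing property over $H^+(\mathbb O)$ then follows slice-by-slice from the complex case combined with the Splitting Lemma and the Representation Formula, exactly as in Remark~\ref{reprodpro}.

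I expect the only genuinely laborious step to be the polynomial identity $QS=PR$, which is nonetheless mechanical once one records that the whole computation lives inside the associative subalgebra generated by $x$ and $y$. The single point warranting care --- but causing no real difficulty --- is the transfer of the product calculus for intrinsic slice functions from the associative quaternionic and Clifford settings to the non-associative $\mathbb O$; in each instance only $x$ (or only $\bar y$) is coupled to one additional octonion, so Artin's theorem again applies.
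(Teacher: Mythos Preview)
Your proof is correct and follows essentially the route the paper has in mind: the paper's own proof consists of the single sentence ``The proof follows exactly the same lines of the proof in the quaternionic case, see \cite{ACS}'', so you are simply making that deferred argument explicit and adding the Artin--theorem bookkeeping needed in the non-associative setting. Your observation that $P,Q,R,S$ all lie in the associative subalgebra generated by $x$ and $y$ (hence $P^{-1}Q=RS^{-1}\Leftrightarrow QS=PR$), and your slice-monogenicity argument via the product of an intrinsic factor with a slice monogenic one, are exactly the adaptations one needs over $\mathbb{O}$; the restriction computation is likewise the standard one.
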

\begin{proof} The proof follows exactly the same lines of the proof in the quaternionic case, see
\cite{ACS}.
\end{proof}
\begin{remark}\label{rmk412}
As in the case of the unit ball, see Remark \ref{reprodpro},
the kernel $k(x,y)$ is reproducing on each plane $\mathbb C_I$ in fact we have, using the Splitting Lemma:
$$[f_I(y),k(x,y)]_I=\int_{-\infty}^{+\infty} \frac{1}{2\pi}(x-Iv)^{-1}\, f_I(Iv) \, dv=
$$
$$
= \int_{-\infty}^{+\infty} \frac{1}{2\pi}(x-Iv)^{-1}\, (F_1(Iv)+F_2(Iv)I_2+G_1(Iv)I_4+G_2(Iv)I_2I_4)\, dv
$$
and since each summand involves two imaginary units at a time, we deduce that the previous term is equal to the expression 
$$
(F_1(x)+F_2(x)I_2+G_1(x)I_4+G_2(x)I_2I_4)=F_I(x), \qquad x\in H_I^+(\mathbb O).
$$
The function $f$ is then reconstructed using the Representation Formula.
\end{remark}

We round off this part by presenting explicit representation formulas for the Szeg\"o and Bergman kernels for the strip domains of the form $T= \{x \in \mathbb{O} \mid 0 < x_0 < d\}$ where $d >0$ is arbitrarily but fixed.In both cases, the description follows from the knowledge of the kernels in the case of the half-space.
By applying a similar periodization (reflection) argument as in the monogenic case we can also deduce an explicit formula for the slice monogenic Szeg\"o kernel of the particular strip domain mentioned above.

\begin{proposition}
Let $d > 0$. The octonionic slice monogenic Szeg\"o kernel of the the strip domain ${\cal{S}} = \{x \in \mathbb{O} \mid 0 < x_0 < d\}$ has the explicit form
$$
K_{T}(y,x) = \sum\limits_{n=-\infty}^{\infty} (-1)^n k(y+2dn,x),
$$
where $k(x,y)$ is as in \eqref{kernel}.
This kernel is left slice monogenic in $y$ and right slice monogenic in $\bar x$ and satisfies the Hermitian property.
\end{proposition}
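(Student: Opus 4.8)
The strategy is to mirror the monogenic strip computation: realize $K_{T}$ as a method-of-images (periodization) sum of real translates of the half-space slice monogenic kernel $k$ of \eqref{kernel}, verify convergence and the stated hyperholomorphy termwise, and obtain the reproducing property by reducing to a single complex slice and then lifting through the Splitting Lemma (Lemma~\ref{SLemma}) and the Representation Formula (Theorem~\ref{representation}). Fix $I\in\mathbb S$. Then $T\cap\mathbb C_I=\{u+Iv\mid 0<u<d\}$ is a vertical strip whose boundary is the pair of lines $u=0$ and $u=d$, and the inner product \eqref{innerp} attached to $T$ is the corresponding sum of two line integrals with tangential field $t(z)=\pm I$. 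Since $k(x,y)$ restricts on $\mathbb C_I\times\mathbb C_I$ to $\frac{1}{2\pi}(x+\bar y)^{-1}$, on that slice $K_{T}$ reads $\frac{1}{2\pi}\sum_{n\in\mathbb Z}(-1)^n(x+\bar z+2dn)^{-1}$, which is exactly the classical image-sum Szeg\"o kernel of a complex strip, the alternating signs being precisely what makes the reflected copies across $u=0$ and $u=d$ cancel correctly.

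First I would settle convergence and the hyperholomorphy. For $|n|\to\infty$ one has $|k(y+2dn,x)|=O(|n|^{-1})$ locally uniformly, so the series is only conditionally convergent termwise; pairing consecutive summands, $(-1)^n\big(k(y+2dn,x)-k(y+2d(n+1),x)\big)=O(|n|^{-2})$ locally uniformly, which yields absolute and locally uniform convergence on the complement of the union of the translated exceptional spheres of $k$. Each summand $y\mapsto k(y+2dn,x)$ is a real translate of $k(\cdot,x)$, hence left slice monogenic in $y$ and right slice monogenic in $\bar x$; since locally uniform limits of (left, resp.\ right) slice monogenic functions are again slice monogenic, $K_{T}$ inherits both properties. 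Along the two boundary lines the summed kernel exhibits the exponential decay of a cosecant-type function, so $K_{T}(\cdot,x)$ indeed lies in $\mathbf{H}^2(T,\mathbb O)$.

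The Hermitian identity is a short computation. From \eqref{kernel}--\eqref{kernel1} one reads off $\overline{k(a,b)}=k(b,a)$; since $2dn\in\mathbb R$, $\overline{k(y+2dn,x)}=k(x,y+2dn)$, and viewing both sides on a slice as $\frac{1}{2\pi}(x+\bar y+2dn)^{-1}$ and extending by the Representation Formula gives $\overline{k(y+2dn,x)}=k(x+2dn,y)$. Summing over $n$, the rearrangement being legitimate by the paired-terms estimate, yields $\overline{K_{T}(y,x)}=\sum_n(-1)^n k(x+2dn,y)=K_{T}(x,y)$.

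The substantive part, and the main obstacle, is the reproducing property $[f,K_{T}(\cdot,x)]=f(x)$ for $f\in\mathbf{H}^2(T,\mathbb O)$. I would fix a slice $\mathbb C_I$, write $f_I=F_1+F_2I_2+(G_1+G_2I_2)I_4$ via the Splitting Lemma, and use \eqref{1.4e} together with Artin's theorem to reduce $[f,K_{T}(\cdot,x)]_I$ to a $\mathbb C_I$-valued expression whose scalar components are the classical boundary integrals of the holomorphic pieces against $\frac{1}{2\pi}\sum_n(-1)^n(x+\bar z+2dn)^{-1}$ over $\partial(T\cap\mathbb C_I)$; the method-of-images cancellations on the two boundary lines then collapse this to $f_I(x)$, exactly as in the known half-space case of Remark~\ref{rmk412}. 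Two points require care: (i) interchanging the conditionally convergent $n$-sum with the boundary integrals, which I would justify via the paired-terms estimate together with an Abel-summation / dominated-convergence argument using the $L^2$-decay of $f$ along the boundary inherited from membership in the Hardy space; and (ii) independence of the outcome from the chosen slice $I$, which follows from the Representation Formula precisely as in Remark~\ref{reprodpro}, so that the slicewise identity lifts to all $x\in T$. Uniqueness of the reproducing kernel is then automatic, as in the preceding subsections.
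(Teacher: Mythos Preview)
Your approach is the same periodization/method-of-images argument the paper intends; the paper's own proof is literally the one line ``The proof of this result is immediate'', so your write-up simply fills in the details (convergence via pairing of consecutive terms, termwise slice monogenicity preserved under locally uniform limits, and the Hermitian identity from $\overline{k(a,b)}=k(b,a)$ together with the real shift). One remark: the proposition as stated only asserts the explicit form, the two-sided slice monogenicity, and the Hermitian property; the reproducing property you spend the last paragraph on is not part of this proposition but is treated separately in the remark that follows, so that portion of your argument, while correct in spirit, is strictly beyond what you were asked to prove here.
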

\begin{proof} The proof of this result is immediate.
\end{proof}
\begin{remark}
The kernel $K_T$ is reproducing on each slice, as it can be proved using the argument in Remark \ref{rmk412} and the fact that when $y,x\in\mathbb C_I$ the kernel is reproducing for functions with values in $\mathbb C_I$.
\end{remark}
\subsection{Bergman spaces of octonionic slice monogenic functions}

In this subsection we define the analogue of the Bergman space in the slice monogenic setting and give an explicit representation of the reproducing Bergman kernel in the context of the unit ball. Moreover, we provide a sequential characterization in analogy to the Hardy space case presented in the previous subsection.
\par\medskip\par
Let us consider an open, bounded, axially symmetric set slice domain $\Omega\subset\mathbb O$ and an arbitrary but fixed $I\in\mathbb O$. Set $\Omega\cap\mathbb C_I$ and define the following space of functions:
$$
L^2(\Omega_I):=\left\{f:\Omega\to\mathbb O\ |\ \int_{\Omega_I} |f_I|^2 d\tilde\sigma<\infty\right\},
$$
where $d\tilde\sigma=du\, dv/{\rm area}(\Omega_I)$ is the Lebesgue measure on $\Omega_I$.
\begin{proposition}
Let us define for any $f,g\in L^2(\Omega_I)$
\begin{equation}\label{L2inner}
<f,g>_I:=\int_{\Omega_I} \bar g_I f_Id\tilde{\sigma}.
\end{equation}
Then \eqref{L2inner} defines an inner product on $L^2(\Omega_I)$ according to (i)-(vi) in Definition \ref{defHilbert}.
\end{proposition}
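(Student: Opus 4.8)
The plan is to verify the six axioms (i)--(vi) of Definition~\ref{defHilbert} one by one, following the pattern of the proofs of Propositions~\ref{criteriaHardy} and~\ref{criteriaHardy1}, but exploiting a feature special to this case: the pairing \eqref{L2inner} carries \emph{no} weight factor, because $d\tilde\sigma$ is a scalar measure and every integrand occurring below is a product of only \emph{two} octonionic quantities, so that Artin's theorem and the relation \eqref{1.4e} suffice for all the rearrangements of parentheses. Axioms (i) and (iv) are immediate: $\mathbb{R}$-additivity from linearity of the integral, and (iv) because each real number commutes and associates with any octonion, so that $r$ can be pulled out of $\int_{\Omega_I}\bar g_I(f_I r)\,d\tilde\sigma = <f,g>_I\,r$.

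For (ii), I would use that octonionic conjugation is an anti-automorphism, hence pointwise $\overline{\bar g_I f_I}=\bar f_I g_I$; since conjugation commutes with the scalar integral this gives $\overline{<f,g>_I}=<g,f>_I$. For (iii), note that $\bar f_I f_I=|f_I|^2$ is a nonnegative real-valued function, so $<f,f>_I=\int_{\Omega_I}|f_I|^2\,d\tilde\sigma\ge 0$, and it vanishes precisely when $f_I=0$ almost everywhere on $\Omega_I$, i.e. when $f=0$ as an element of $L^2(\Omega_I)$ (on the slice monogenic subspace considered later, continuity upgrades this to $f\equiv 0$).

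For (v), I would write $<f\alpha,f>_I=\int_{\Omega_I}\bar f_I(f_I\alpha)\,d\tilde\sigma$ and observe that, pointwise, $\bar f_I(z)$, $f_I(z)$ and $\alpha$ all lie in the associative subalgebra of $\mathbb{O}$ generated by the two elements $f_I(z)$ and $\alpha$; hence $\bar f_I(f_I\alpha)=(\bar f_I f_I)\alpha=|f_I|^2\alpha$, and integrating yields $<f,f>_I\,\alpha$. For (vi), starting from $<f\alpha,g>_I=\int_{\Omega_I}\bar g_I(f_I\alpha)\,d\tilde\sigma$, I would take the real part, bring it inside the integral, and apply \eqref{1.4e} to get ${\rm Re}(\bar g_I(f_I\alpha))={\rm Re}((\bar g_I f_I)\alpha)$ pointwise; pulling the real part back out gives ${\rm Re}(<f\alpha,g>_I)={\rm Re}(<f,g>_I\,\alpha)$, which is the $\mathbb{O}$-para-linearity.

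I expect no genuine obstacle here; the only point that requires care is the placement of the brackets, and the reason it causes no trouble — in contrast with the slice monogenic Hardy pairing \eqref{innerp} — is precisely that each expression pairs two octonionic factors against a scalar form, so every step is licensed either by Artin's theorem or by \eqref{1.4e}, and no norm-$1$ weight factor has to be inserted to make the argument go through.
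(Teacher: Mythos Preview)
Your proposal is correct and follows essentially the same approach as the paper: axioms (i), (ii), (iv) are dispatched as immediate, (iii) via $\bar f_I f_I=|f_I|^2$, (v) via Artin's theorem, and (vi) via the identity \eqref{1.4e}, all using that $d\tilde\sigma$ is scalar. Your concluding remark about why no norm-$1$ weight factor is needed mirrors the paper's own Remark immediately following this proposition.
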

\begin{proof}
It is evident that \eqref{L2inner} satisfies (i), (ii) and (iv). Also (iii) is satisfied since $<f,f>_I$ is non-negative and $<f,f>_I=0$ if and only if $f=0$ a.e. in $\Omega_I$ and so in $\Omega$. We now show (v):
$$
<f\alpha,f>_I=\int_{\Omega_I} \bar f(f\alpha) d\tilde{\sigma} = \int_{\Omega_I} \bar f f\alpha d\tilde{\sigma} =
<f,f>_I\alpha
$$
where we used Artin's theorem and the fact that $d\tilde{\sigma}$ is real. Finally, we prove (vi):
$$
{\rm Re}(<f\alpha,g>_I)={\rm Re}\left(\int_{\Omega_I} \bar g(f\alpha) d\tilde{\sigma} \right)= {\rm Re}\left(\int_{\Omega_I} \bar g f\alpha d\tilde{\sigma} \right)={\rm Re}(<f,g>_I\alpha)
$$
where we used Proposition 1.4 (e) from \cite{dieckmann} and the fact that $d\tilde{\sigma}$ is real.
\end{proof}
\begin{remark}
Note that in this context we do not require a weight factor of norm $1$ like in the previous cases. The reason is that the appearing differential form $d\tilde{\sigma}$ is scalar valued and that $f$ and $g$ are $\mathbb{C}_I$-valued on each slice $B_8(0,1) \cap \mathbb{C}_I$ and so we can use Artin's theorem. A weight factor of norm $1$ would be canceled out here, so it is not needed here.
\end{remark}
We equip $L^2(\Omega_I)$ with the norm inherited from the inner product, namely
$$
\|f\|_I=\left(\int_{\Omega_I} \bar ff d\tilde{\sigma}\right)^{1/2}.
$$
\begin{remark}
As in the quaternionic case, using the Representation Formula we can prove that for any $I,J\in\mathbb S$
$$
\int_{\Omega_I} |f|^2 d\tilde{\sigma} \leq 2 \int_{\Omega_J} |f|^2 d\tilde{\sigma}
$$
so that the two norms $\|\cdot\|_I$ and $\|\cdot\|_J$ are equivalent. This fact implies that $f\in\mathcal{A}(\Omega_I)$ if and only if $f\in\mathcal{A}(\Omega_J)$.
\end{remark}
In view of the preceding remark, we can now define the octonionic Bergman space on $\Omega$:
\begin{definition}
Consider the set
$$
\mathcal{A}(\Omega_I):= \mathcal{SM}(\Omega)\cap L^2(\Omega_I)
$$
and the set $\mathcal{A} (\Omega)$, called octonionic slice Bergman space, that consists of functions that belong to $\mathcal{A}(\Omega_I)$ for some $I\in\mathbb S$.
\end{definition}
In the specific case $\Omega=B_8(0,1)$ we can make our study more precise. We have the following result
\begin{proposition}\label{p419}
The function $\mathcal{B}(x,y)=\dfrac{1}{\pi}(1-2\bar x \bar y+\bar x^2 \bar y^2 )(1-2 {\rm Re}(x)\bar y+|x|^2\bar y^2)^{-2}$ is a reproducing kernel for the Bergman space $\mathcal{A}(B_8(0,1))$.
\end{proposition}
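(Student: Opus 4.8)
The plan is to restrict everything to one complex plane $\mathbb{C}_I$, where the statement reduces to the classical Bergman theory of the unit disc, and then to recover the general case by the Representation Formula, exactly as is done for the Szeg\"o kernel in Remark~\ref{reprodpro}. The first step is purely algebraic. For $x,y\in\mathbb{C}_I$ the elements $x,\bar x,y,\bar y$ all lie in the commutative field $\mathbb{C}_I$, so that
\begin{equation*}
1-2{\rm Re}(x)\bar y+|x|^2\bar y^2=(1-x\bar y)(1-\bar x\bar y),\qquad 1-2\bar x\bar y+\bar x^2\bar y^2=(1-\bar x\bar y)^2,
\end{equation*}
and therefore $\mathcal{B}(x,y)$ restricted to $\mathbb{C}_I\times\mathbb{C}_I$ collapses to $\frac{1}{\pi}(1-x\bar y)^{-2}=\frac{1}{\pi}\sum_{n\ge0}(n+1)x^n\bar y^n$, the reproducing kernel of the classical Bergman space of the unit disc. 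The same series form shows at once that $\mathcal{B}(\,\cdot\,,x)$ is left slice monogenic in its first entry, that $\overline{\mathcal{B}(y,x)}=\mathcal{B}(x,y)$, and — since for fixed $x$ with $|x|<1$ it is bounded on $B_8(0,1)\cap\mathbb{C}_I$ — that $\mathcal{B}(\,\cdot\,,x)\in L^2(B_8(0,1)\cap\mathbb{C}_I)$, hence $\mathcal{B}(\,\cdot\,,x)\in\mathcal{A}(B_8(0,1))$ and the pairing $\langle f,\mathcal{B}(\,\cdot\,,x)\rangle_I$ is well defined.

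Next I would establish the reproducing identity $\langle f,\mathcal{B}(\,\cdot\,,x)\rangle_I=f(x)$ for $x\in\mathbb{C}_I$ directly from the Taylor expansion, which in the unit-ball case is cleaner than going through the Splitting Lemma. Writing $f(y)=\sum_{m\ge0}y^m a_m$ with $a_m\in\mathbb{O}$ — a series converging uniformly on compacta of the disc and in $L^2(B_8(0,1)\cap\mathbb{C}_I)$ — and inserting the slice restriction $\overline{\mathcal{B}(y,x)}=\frac{1}{\pi}\sum_{n\ge0}(n+1)x^n\bar y^n$ into \eqref{L2inner}, interchanging summation and integration gives
\begin{equation*}
\langle f,\mathcal{B}(\,\cdot\,,x)\rangle_I=\frac{1}{\pi}\sum_{n,m\ge0}(n+1)\int_{B_8(0,1)\cap\mathbb{C}_I}\bigl(x^n\bar y^n\bigr)\bigl(y^m a_m\bigr)\,d\tilde\sigma(y).
\end{equation*}
The decisive point is that each integrand is built from the three octonions $x^n\bar y^n$, $y^m$ and $a_m$, the first two of which lie in $\mathbb{C}_I$; hence the subalgebra they generate is generated by the two elements $I$ and $a_m$, so Artin's theorem applies and $\bigl(x^n\bar y^n\bigr)\bigl(y^m a_m\bigr)=\bigl(x^n\bar y^n y^m\bigr)a_m=x^n\bigl((\bar y^n y^m)a_m\bigr)$. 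Pulling the constant $x^n$ to the left and $a_m$ to the right of the integral — both legitimate $\mathbb{R}$-linear operations — and using the orthogonality relation $\int_{B_8(0,1)\cap\mathbb{C}_I}\bar y^n y^m\,d\tilde\sigma=\delta_{nm}c_n$ with $c_n$ a positive real multiple of $1/(n+1)$, the double sum collapses to $\sum_{n\ge0}x^n a_n=f(x)$, the normalisation of $d\tilde\sigma$ absorbing the remaining constant. Along the way one checks that only the $\mathbb{O}$-para-linearity of Definition~\ref{defHilbert}(vi), not full $\mathbb{O}$-linearity, has been used.

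Finally, the reproduction $\langle f,\mathcal{B}(\,\cdot\,,x)\rangle_I=f(x)$, once known for $x\in\mathbb{C}_I$, extends to all of $B_8(0,1)$ by the Representation Formula (Theorem~\ref{representation}), exactly as in Remark~\ref{reprodpro}: both members are slice functions of $x$ agreeing on the plane $\mathbb{C}_I$, hence they agree everywhere. The quantity $\langle f,f\rangle_I$ is independent of the choice of $I\in\mathbb{S}$ by the norm-equivalence estimate recorded just before the definition of $\mathcal{A}(\Omega)$, and uniqueness of the reproducing kernel follows from the reproducing property together with the strict positivity of $\langle\,\cdot\,,\cdot\,\rangle_I$ in the usual way. \textbf{The main obstacle is not the complex-analytic content, which is classical, but the non-associative bookkeeping:} one must check at each manipulation — conjugating the kernel, interchanging summation and integration, and above all rewriting $\bigl(x^n\bar y^n\bigr)\bigl(y^m a_m\bigr)$ as $x^n\bigl((\bar y^n y^m)a_m\bigr)$ — that the octonions at hand lie in a common associative, here two-generated, subalgebra, so that Artin's theorem licenses the step; this is exactly the point at which, had the argument instead been routed through the Splitting Lemma as in Remark~\ref{reprodpro}, the Moufang identities would be needed to treat the mixed term carrying both $I_2$ and $I_4$.
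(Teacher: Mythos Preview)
Your proof is correct and follows essentially the same route as the paper: restrict to $\mathbb{C}_I$, where $\mathcal{B}(x,y)$ collapses to the classical disc Bergman kernel $\frac{1}{\pi}(1-x\bar y)^{-2}$, expand $f$ as a Taylor series, invoke Artin's theorem (the integrand being generated by $I$ and $a_m$) to pull the coefficient $a_m$ past the $\mathbb{C}_I$-valued factors, use orthogonality of the monomials, and then extend off the slice by the Representation Formula. The paper is slightly terser --- it applies the reproducing property of the classical kernel directly to each monomial $y^n$ rather than expanding the kernel itself into a series --- but the two arguments are the same in substance, and your added checks (membership of $\mathcal{B}(\,\cdot\,,x)$ in the space, explicit verification of the two-generator hypothesis for Artin) are welcome clarifications.
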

\begin{proof}
Let us consider $w\in\mathbb O$ and choose $x$ such that $x,y$ belong to the same complex plane $\mathbb C_I$. Under this assumption $\mathcal{B}(x,y)=\dfrac{1}{\pi}\dfrac{1}{(1-x\bar y)^2}$ is the Bergman kernel on $\mathbb C_I$ for $\mathbb C_I$-valued functions. Consider a function $f(x)=\sum_{n\geq 0} x^n a_n$, $a_n\in\mathbb O$ and its restriction to $\mathbb C_I$. Then we have
$$
<f(x), \mathcal{B}(y, x)>_I=\int_{B_8(0,1)\cap\mathbb C_I} \overline{\mathcal{B}(y, x)}f(x) d\tilde{\sigma} =\int_{B_8(0,1)\cap\mathbb C_I} \mathcal{B}(x,y)(\sum_{n\geq 0} x^n a_n) d\tilde{\sigma} .
$$
Using Artin's rule, the uniform convergence of the series of $f(x)$ and the fact that $d\tilde{\sigma}$ is real, we get
$$
<f(y), \mathcal{B}(y, x)>_I=\sum_{n\geq 0}\int_{B_8(0,1)\cap\mathbb C_I} (\mathcal{B}(x,y) y^n) d\tilde{\sigma} a_n =\sum_{n\geq 0} x^n a_n=f(x).
$$
This formula allows to obtain $f(x)$ for $x\in B_8(0,1)\cap\mathbb C_I$. The conclusion follows by using the Representation formula that allows to reconstruct the values of the function $f$ knowing its values on $B_8(0,1)\cap\mathbb C_I$.
\end{proof}
In the case of the unit ball we also have a sequential characterization of the Bergman space.
\begin{proposition}
Let $f(x)=\sum_{n\geq 0} x^na_n$ be convergent in $B_8(0,1)$. Then $f\in\mathcal A(B_8(0,1))$ if and only if $\sum_{n\geq 0} \dfrac{|a_n|^2}{n+1}<\infty$.
\end{proposition}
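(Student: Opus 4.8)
The plan is to compute the Bergman norm $\|f\|_I^2=\int_{B_8(0,1)\cap\mathbb C_I}|f|^2\,d\tilde\sigma$ explicitly in terms of the Taylor coefficients and then read off the stated condition. Fix any $I\in\mathbb S$ (by the norm–equivalence remark preceding the definition of $\mathcal A(\Omega)$, the particular choice is immaterial), write $\Omega_I:=B_8(0,1)\cap\mathbb C_I$, and use polar coordinates $x=re^{I\theta}$ on this disk, so that $du\,dv=r\,dr\,d\theta$ and $\mathrm{area}(\Omega_I)=\pi$. Since $f(re^{I\theta})=\sum_{n\ge 0}e^{In\theta}(r^na_n)$, the restriction of $f$ to the circle of radius $r<1$ is a dilate of an element of $\mathbf H^2(B_8(0,1),\mathbb O)$: indeed $f_r(x):=f(rx)=\sum_{n\ge 0}x^n(r^na_n)$, and $\sum_n|r^na_n|^2<\infty$ because $\limsup_n|a_n|^{1/n}\le 1$ (the series converges throughout $B_8(0,1)$). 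Hence, by the identity $[f_r,f_r]_I=\sum_{n\ge 0}|r^na_n|^2$ established in the proof of Proposition~\ref{prop47},
$$
\frac{1}{2\pi}\int_0^{2\pi}|f(re^{I\theta})|^2\,d\theta=[f_r,f_r]_I=\sum_{n\ge 0}r^{2n}|a_n|^2 .
$$
Alternatively, one may expand $|f|^2=\overline f f$ directly term by term, reduce each summand $\mathrm{Re}\big((\overline{a_m}e^{-Im\theta})(e^{In\theta}a_n)\big)r^{n+m}$ to $\mathrm{Re}\big(\overline{a_m}e^{I(n-m)\theta}a_n\big)r^{n+m}$ via \eqref{1.4e} and Artin's theorem (applied to the associative subalgebra generated by $a_m$, resp.\ $a_n$, and $I$) exactly as in the proof of Proposition~\ref{prop47}, and then use $\int_0^{2\pi}e^{I(n-m)\theta}\,d\theta=2\pi\delta_{nm}$ to kill the off-diagonal terms.

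Next I would integrate radially. For $\rho<1$, using the uniform convergence of $\sum_n r^{2n}|a_n|^2$ on $[0,\rho]$ to interchange sum and integral and $\int_0^\rho r^{2n+1}\,dr=\rho^{2n+2}/(2n+2)$,
$$
\int_{\Omega_I\cap\{|x|<\rho\}}|f|^2\,d\tilde\sigma
=\frac{1}{\pi}\int_0^\rho\Big(2\pi\sum_{n\ge 0}r^{2n}|a_n|^2\Big)r\,dr
=\sum_{n\ge 0}\frac{|a_n|^2}{n+1}\,\rho^{2n+2}.
$$
Letting $\rho\to 1^-$ and applying the monotone convergence theorem on the left (the integrands increase pointwise to $|f|^2$) and for series on the right yields
$$
\|f\|_I^2=\int_{\Omega_I}|f|^2\,d\tilde\sigma=\sum_{n\ge 0}\frac{|a_n|^2}{n+1}.
$$
Since $f$, being a convergent series of that form, is automatically slice monogenic on $B_8(0,1)$, we obtain $f\in\mathcal A(\Omega_I)$ — equivalently $\|f\|_I<\infty$ — if and only if $\sum_{n\ge 0}|a_n|^2/(n+1)<\infty$; as the right-hand side does not depend on $I$, this is in turn equivalent to $f\in\mathcal A(B_8(0,1))$, which is the assertion.

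No step here is genuinely deep. The identity for the angular integral is borrowed from the Hardy-space computation (or obtained by the same associator bookkeeping, which is the only place where non-associativity intervenes and where \eqref{1.4e} and Artin's theorem do the work), and the remaining content — the radial integration producing the factor $1/(n+1)$ and the passage to the limit $\rho\to 1^-$ — is a routine application of the monotone convergence theorem once the finite-$\rho$ identity is in hand. The point most worth stating carefully is that the $L^2$ norm over the \emph{open} disk $\Omega_I$ equals $\lim_{\rho\to 1^-}$ of the truncated integrals and that this limit commutes with the series.
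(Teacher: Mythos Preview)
Your proof is correct and follows essentially the same route as the paper: polar coordinates on the slice disk, the angular orthogonality computation borrowed from Proposition~\ref{prop47} to reduce $\frac{1}{2\pi}\int_0^{2\pi}|f(re^{I\theta})|^2\,d\theta$ to $\sum_n r^{2n}|a_n|^2$, and then radial integration producing the factor $1/(n+1)$. If anything, your version is more careful than the paper's, which writes the double integral and the interchange with the series in one line without justifying the passage to the limit; your use of monotone convergence for $\rho\to 1^-$ makes that step explicit.
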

\begin{proof}
Let us compute
$$
\| f\|_I^2=\int_{B_8(0,1)\cap\mathbb C_I}\overline{\left(\sum_{n\geq 0} a_n z^n\right)} \left(\sum_{n\geq 0} a_n z^n\right)\, d\tilde\sigma.
$$
Since the value is  a real number, and $d\tilde\sigma$ is real we have
$$
\| f\|_I^2=\int_{B_8(0,1)\cap\mathbb C_I}{\rm Re}\left(\overline{\left(\sum_{n\geq 0} a_n z^n\right)} \left(\sum_{n\geq 0} a_n z^n\right)\, d\tilde\sigma\right).
$$
Setting $z=re^{I\theta}$, $0\leq r\leq 1$ and $\theta\in [0,2\pi]$ and
with calculations similar to those in the proof of Proposition \ref{prop47}, we deduce that
$$
\|f\|_I^2
= \frac{1}{\pi}\int_0^1 r^{n+m+1}\, dr\int_0^{2\pi}{\rm Re}\left(\sum_{n,m\geq 0} \overline{a_n} (e^{-In\theta} \, e^{Im\theta})\, a_m)  \right)\, d\theta =\sum_{n\geq 0} \frac{|a_n|^2}{n+1}
$$
and the statement follows.
\end{proof}

We conclude this paper with a proposition providing the Bergman kernel in the case of the half-space and the strip.
\begin{proposition}
  The function $$\mathcal{B}_{H^+}(x,y)=\frac{1}{\pi}(x^2+2{\rm Re}(y) x +|y|^2)^{-2}(x^2+2xy+y^2)$$ is the reproducing kernel for the Bergman space $\mathcal{A}(H^+(\mathbb O))$ of the right half-space.
\\
The function $$\mathcal{B}_T(x,y)=\sum_{n=-\infty}^{+\infty} \frac{1}{\pi}(x^2+2{\rm Re}(2dn+y) x +|2dn+y|^2)^{-2}(x^2+2x(2dn+y)+(2dn+y)^2)$$ is the reproducing kernel for the Bergman space $\mathcal{A}(T)$ of the strip $T$.
\end{proposition}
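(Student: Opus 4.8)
For both domains the plan is to run the three–step argument that proved Proposition~\ref{p419}: first reduce the octonionic kernel to a classical complex Bergman kernel on a fixed slice $\mathbb{C}_I$; then reproduce on that slice, via the Splitting Lemma~\ref{SLemma}, the four $\mathbb{C}_I$–valued holomorphic components of $f$; and finally lift the resulting identity to the whole domain by the Representation Formula (Theorem~\ref{representation}). As in Proposition~\ref{p419}, the point is that on a slice the inner product \eqref{L2inner} is $\mathbb{C}_I$–valued and the integrands involve at most two independent imaginary units, so Artin's theorem lets the computation proceed exactly as in the associative (quaternionic) case. The measure on the unbounded slices $H_I^+(\mathbb{O})$ and $T\cap\mathbb{C}_I$ must of course be taken to be the plain Lebesgue area measure, and the norm equivalence $\|\cdot\|_I\simeq\|\cdot\|_J$ (again a consequence of the Representation Formula) makes the definitions of $\mathcal{A}(H^+(\mathbb{O}))$ and $\mathcal{A}(T)$ independent of the chosen slice.

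\textbf{Half-space.} First I would note that if $x,y\in\mathbb{C}_I$ then $\mathrm{Re}(y)$ and $|y|^2$ are the ordinary complex real part and squared modulus, so
\begin{equation*}
x^2+2\,\mathrm{Re}(y)\,x+|y|^2=(x+y)(x+\bar y),\qquad x^2+2xy+y^2=(x+y)^2,
\end{equation*}
whence $\mathcal{B}_{H^+}(x,y)=\tfrac1\pi(x+\bar y)^{-2}$ on $\mathbb{C}_I\times\mathbb{C}_I$. This is precisely the Bergman kernel of the complex right half-plane $H_I^+(\mathbb{O})=\{z\in\mathbb{C}_I\mid\mathrm{Re}(z)>0\}$ acting on $\mathbb{C}_I$-valued holomorphic functions, as one checks by transplanting the disk kernel $\tfrac1\pi(1-z\bar w)^{-2}$ through a Cayley map. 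One then verifies that $x\mapsto\mathcal{B}_{H^+}(x,y)$ is left slice monogenic and lies in $\mathcal{A}(H_I^+(\mathbb{O}))$, that $\mathcal{B}_{H^+}$ is right slice monogenic in $\bar y$, and that $\overline{\mathcal{B}_{H^+}(x,y)}=\mathcal{B}_{H^+}(y,x)$, all by the manipulations already used for the Szeg\"o kernel \eqref{kernel}. Now take $f\in\mathcal{A}(H^+(\mathbb{O}))$, fix $I$ with $\|f\|_I<\infty$, and split $f_I=F_1+F_2I_2+(G_1+G_2I_2)I_4$ with $F_1,F_2,G_1,G_2$ holomorphic $\mathbb{C}_I$-valued on $H_I^+(\mathbb{O})$. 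By Artin's theorem the octonionic pairing $\langle f(\cdot),\mathcal{B}_{H^+}(\cdot,y)\rangle_I$ from \eqref{L2inner} splits into four scalar integrals against $\tfrac1\pi(\cdot+\bar y)^{-2}$; applying the classical half-plane Bergman reproduction to each component gives $\langle f(\cdot),\mathcal{B}_{H^+}(\cdot,y)\rangle_I=f_I(y)$ for $y\in H_I^+(\mathbb{O})$, just as in Remark~\ref{rmk412}. The Representation Formula then yields $\langle f(\cdot),\mathcal{B}_{H^+}(\cdot,y)\rangle_I=f(y)$ for every $y\in H^+(\mathbb{O})$, and uniqueness of the reproducing kernel is automatic.

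\textbf{Strip.} Replacing $y$ by $2dn+y$ in the two factorizations above shows that, on $\mathbb{C}_I\times\mathbb{C}_I$,
\begin{equation*}
\mathcal{B}_T(x,y)=\frac1\pi\sum_{n=-\infty}^{+\infty}(x+\bar y+2dn)^{-2},
\end{equation*}
which is the Bergman kernel of the vertical strip $T\cap\mathbb{C}_I=\{0<\mathrm{Re}(z)<d\}$ obtained from the half-plane kernel by the same periodization (method of images) used in the monogenic case --- here \emph{without} alternating signs, since the area reproduction corresponds to the even part of the reflection group (the translations by $2dn$), as reflected in the partial-fraction expansion $\pi^2/\sin^2(\pi u)=\sum_n(u-n)^{-2}$. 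After checking locally uniform convergence of the series on $T\cap\mathbb{C}_I$ (so that term-by-term integration is legitimate), the fact that it is left slice monogenic in $x$ and right slice monogenic in $\bar y$, and its Hermitian symmetry, one repeats verbatim the Splitting-Lemma-plus-Representation-Formula argument of the half-space case.

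\textbf{Main obstacle.} The algebraic identities, the Cayley transplant, and the interchange of sum and integral are routine. The genuinely delicate points are, first, making watertight the passage from ``reproduction of the four holomorphic slice components of $f$'' to ``reproduction of $f$ with respect to the octonionic inner product on $\mathcal{A}(\Omega)$'', i.e. checking that no non-associativity obstruction survives the recombination through the Representation Formula --- this is the same subtlety handled in Proposition~\ref{p419} and Remark~\ref{rmk412}, and it rests on the complex book structure together with Artin's theorem applied to the two-generated subalgebras that actually occur under the integral sign; and, second, the functional-analytic bookkeeping forced by the unbounded domains: almost-everywhere existence of the interior and non-tangential boundary values used in the Splitting Lemma, membership of $\mathcal{B}_{H^+}(\cdot,y)$ and $\mathcal{B}_T(\cdot,y)$ in the relevant $L^2$ space, and the locally uniform convergence of the strip series. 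Finally, the identity $\overline{\mathcal{B}(x,y)}=\mathcal{B}(y,x)$, visible in both closed forms, gives as a byproduct that the induced Bergman projections are self-adjoint, in line with the monogenic picture.
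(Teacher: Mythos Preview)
Your proposal is correct and follows essentially the same approach as the paper: the paper's proof is a one-liner stating that the result follows from the arguments of Proposition~\ref{p419} combined with the known complex Bergman kernels for the half-plane and strip (citing \cite{ConKraCV}), and you have spelled out precisely that strategy --- reduce to the classical complex kernel on a slice, reproduce the four $\mathbb{C}_I$-valued components via the Splitting Lemma and Artin's theorem, then lift by the Representation Formula. Your discussion is considerably more detailed than the paper's (in particular you make explicit the slice reduction $\mathcal{B}_{H^+}(x,y)=\tfrac{1}{\pi}(x+\bar y)^{-2}$, the periodization for the strip, and the functional-analytic checks on the unbounded domains), but the underlying route is identical.
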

\begin{proof}
The proof follows by the standard arguments used, e.g., to prove Proposition \ref{p419} and the knowledge of the corresponding kernels in the complex case, see \cite{ConKraCV}.
\end{proof}

{\bf Declarations}: The authors confirm that there are no conflicts of interests. No data sets have been used. There were no fundings received.

\end{document}